\newtheorem{theorem}{Theorem}[section]
\newtheorem{lemma}{Lemma}[section]
\newtheorem{problem}{Problem}[section]
\numberwithin{equation}{section}
\newenvironment{proof of 1.7}{\noindent{\emph{Proof of Theorem 1.7 (1).}}\ }{\hfill $\square$\par}
\newenvironment{proof of 1.10}{\noindent{\emph{Proof of Theorem 1.10.}}\ }{\hfill $\square$\par}
\begin{document}
	\begin{frontmatter}  
		\title{Spectral extrema of graphs: Forbidden star-path forests\,\tnoteref{titlenote}}  
		\tnotetext[titlenote]{This work was supported by the National Nature Science Foundation  of China (Nos. 11871040, 11971180, 12271337).}    %文章下标批注
		\author{Yanni Zhai$^1$}  
		%	\ead{xiyingyuan@shu.edu.cn}   
		\author{Xiying Yuan$^{1,}$\corref{correspondingauthor}}
		\author{Lihua You$^2$}  
		\cortext[correspondingauthor]{Corresponding author. \\
			Email address: yannizhai2022@163.com (Yanni Zhai), xiyingyuan@shu.edu.cn (Xiying Yuan), \\ylhua@scnu.edu.cn (Lihua You).}   
		\address{$^1$Department of Mathematics, Shanghai University, Shanghai 200444, P.R. China} 
		\address{$^2$ School of Mathematical Sciences, South China Normal University, Guangzhou, 510631, P.R. China} 
		\begin{abstract}  
		A path of order $n$ is denoted by $P_n$, and a star of order $n$ is denoted by $S_{n-1}$. A star-path forest is a forest whose connected components are paths and stars.  In this paper we determine the maximum spectral radius of  graphs that contain no copy of $kS_{\ell-1}\cup P_{\ell}$, $k_1S_{2\ell -1}\cup k_2P_{2\ell}$ or $kS_{4}\cup 2P_{5}$ for $n$ appropriately large. 
		\end{abstract}   
		\begin{keyword}  
	\emph{Adjacency matrix \sep Spectral radius \sep Extremal graph \sep Path-star forest} 
		\end{keyword} 
	\end{frontmatter}
	\section{Introduction}
	In this paper, we consider undirected graphs without loops or multiedges. The order of a graph $G=\left(V(G),\,E(G)\right)$ is the number of its vertices, and the size of a graph $G$ is the number of its edges, denoted by $e(G)$. The adjacent matrix $A(G)=(a_{ij})$ of $G$ is a matrix, where $a_{ij}=1$ if $v_i$ is adjacent to $v_j$, and 0 otherwise. The spectral radius of $G$ is the largest eigenvalue of $A(G)$, denoted by $\rho (G)$. 
	For a vertex $v\in V(G)$, the neighborhood of $v$ in $G$ is denoted by $N_{G}(v)=\{u\in V(G):  uv\in E(G)\}$ or simply $N(v)$ and $N[v]=N(v)\cup \{v\}$.  Denote $N_G^2(v)$ or simply $N^2(v)$ by the set of vertices at distance two from $v$ in $G$.

	Given two vertex-disjoint graphs $G$ and $H$, the union of graphs $G$ and $H$ is the graph $G\cup H$ with vertex set $V(G)\cup V(H)$ and edge set $E(G)\cup E(H)$. In particular, $G=kH$ is the vertex-disjoint union of $k$ copies of $H$. The join of $G$ and $H$, denoted by $G\vee H$, is the graph obtained from $G\cup H$  by adding all edges between $V(G)$ and $V(H)$. For a graph $G$ and its subgraph $H$, $G-H$ denotes the subgraph induced by $V(G)\setminus V(H)$. For $U\subseteq V(G)$, let $G[U]$ be the subgraph of $G$ induced by $U$.
	A path of order $n$ is denoted by $P_{n}$, and a star of order $n$ is denoted by $S_{n-1}$. 
	Let $S_{n,\,h}$ be the graph of order $n$ obtained by joining a clique of order $h$ with an independent set of order $n-h$, i.e., $S_{n,\,h}=K_h\vee \overline{K}_{n-h}$. Let $S^{+}_{n,\,h}$ be the graph obtained from $S_{n,\,h}$ by adding one edge, i.e., $S^{+}_{n,\,h}=K_h\vee \left(K_2\cup \overline{K}_{n-h-2}\right)$ .

	A graph is $H$-free if it does not contain a copy of $H$ as a subgraph. The Tur\'{a}n number ex$(n,\,H)$ is the maximum number of edges in a graph of order $n$ which is $H$-free. We denote by Ex$(n,\,H)$ the set of $H$-free graphs of order $n$ with ex$(n,\,H)$ edges, and call a graph in Ex$(n,\,H)$ an extremal graph for $H$. 
	
	In 1956, Erd\H{o}s and Gallai \cite{1959} studied Tur\'{a}n numbers of paths. Then in 2008, Balister and Gy\H{o}ri \cite{2008-P} gave an improvement of the extremal graph theorem for $P_{k}$. In 2019, Lan et al. \cite{2019-L} determined the Tur\'{a}n number of a star.
	A linear forest is a forest whose connected components are paths. A star forest is a forest whose connected components are stars. A star-path forest is a forest whose connected components are paths and stars. In 2011, Bushaw and Kettle \cite{2011} determined the Tur\'{a}n numbers of $kP_{\ell}$ for sufficiently large $n$, which was extended by Lidicik\'{y}, Liu and Palmer \cite{2013}. Yuan and Zhang \cite{2017,2021} determined the Tur\'{a}n numbers of linear forests containing at most one odd path for all $n$. In 2022, Li, Yin and Li \cite{2022-L} determined the Tur\'{a}n numbers of  star forests. Very recently, Fang and Yuan \cite{2022-F} determined the Tur\'{a}n numbers of $kS_{\ell -1}\cup P_{\ell}$, $k_1S_{2\ell-1}\cup k_2P_{2\ell }$ and $kS_{4}\cup 2P_{5}$ for $n$ appropriately large.
	
	\begin{theorem}[{Balister and Gy\H{o}ri~\cite[]{2008-P}}]\label{lu-jie}
		Let $G$ be a connected graph of order $n$ containing no path of order $k+1$, $n>k\geq 3$. Then 
		\begin{equation*}
			e(G)\leq {\rm{max}}\;\bigg\{\binom{k-1}{2}+(n-k+1),\;\binom{\lceil(k+1)/2\rceil}{2} +\bigg\lfloor\frac{k-1}{2}\bigg\rfloor \left(n-\bigg\lceil\frac{k+1}{2}\bigg\rceil\right)\bigg\}.
		\end{equation*}	
	%	$e(G)$ is bounded above by the maximum of $\binom{k-1}{2}+(n-k+1)$ and $\binom{\lceil(k+1)/2\rceil}{2} +\lfloor\frac{k-1}{2}\rfloor \left(n-\lceil\frac{k+1}{2}\rceil\right)$.
	\end{theorem}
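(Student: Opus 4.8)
The plan is to show that the two quantities inside the maximum are exactly the sizes of the two natural connected $P_{k+1}$-free graphs, and that every such graph is dominated by one of them. Write $\beta=\lfloor (k-1)/2\rfloor$, let $f_1(n)=\binom{k-1}{2}+(n-k+1)$, and let $f_2(n)$ denote the second expression. The bound $f_1(n)$ is $e(G_1)$ for the graph $G_1$ obtained from a clique $K_{k-1}$ by attaching $n-k+1$ pendant vertices at a single clique vertex; a longest path in $G_1$ traverses the whole clique and one pendant, so it has $k$ vertices and $G_1$ is $P_{k+1}$-free. A short computation shows $f_2(n)=e(S_{n,\beta})$ when $k$ is odd and $f_2(n)=e(S^{+}_{n,\beta})$ when $k$ is even; in either case a longest path alternates between the $\beta$ dominating vertices and the independent part (using the extra edge of $S^{+}_{n,\beta}$ in the even case) and has at most $k$ vertices. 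I would begin by taking an edge-maximal connected $P_{k+1}$-free graph $G$ on $n$ vertices and fixing a longest path $P=v_1\cdots v_t$, noting $t\le k$ and, since $n>k$, that $V(G)\setminus V(P)\neq\varnothing$.

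The engine of the argument would be Pósa-type rotation lemmas applied to $P$: the endpoints $v_1,v_t$ have all their neighbors on $P$, and a crossing pair of chords $v_1v_{i+1},\,v_tv_i$ cannot occur, since otherwise one closes $P$ into a spanning cycle of $V(P)$ and, using a vertex off $P$ reached by connectivity, produces a $P_{k+1}$. These constraints bound $d(v_1)+d(v_t)$ by roughly $t\le k$ and, more importantly, restrict how the vertices off $P$ may attach to $P$. From here I expect a structural dichotomy, controlled by the threshold $\beta$: either $G$ has a set of at most $\beta$ vertices meeting the edges in a near-dominating way (the $S_{n,\beta}$ or $S^{+}_{n,\beta}$ regime), or $G$ has a dense core behaving like a clique of order close to $k-1$ to which the remaining vertices are attached sparsely (the $G_1$ regime).

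To convert the dichotomy into the numerical bound I would induct on $n$. If $\delta(G)=1$, deleting a pendant keeps $G$ connected and $P_{k+1}$-free, and since $f_1(n)=f_1(n-1)+1$ while $f_2(n)=f_2(n-1)+\beta$, the inductive bound is preserved. If $\delta(G)\ge 2$ I would instead disintegrate, repeatedly removing vertices of degree at most $\beta$; each removal costs at most $\beta$ edges, exactly the per-vertex growth rate of $f_2$, so a graph that disintegrates completely satisfies $e(G)\le f_2(n)$. If a nonempty core $D$ with $\delta(D)\ge\beta+1=\lceil(k+1)/2\rceil$ survives, then a $2$-connected block of $D$ contains, by the classical Erdős--Gallai/Dirac cycle bound, a cycle of length at least $\min(|D|,2\beta+2)$; being $P_{k+1}$-free forces $D$ to be a clique of bounded order (size $k-1$ in the odd case, $\lceil(k+1)/2\rceil$ in the even case), after which the sparse attachment of the deleted vertices gives $e(G)\le f_1(n)$. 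Comparing the two outcomes yields the stated maximum.

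The hard part will be the bookkeeping at the interface of the two regimes. The naive disintegration over-counts in the clique-core case (a per-vertex cost of $\beta$ rather than $1$), so I must use the rotation constraints to prove that once a large clique core is present the outside vertices attach essentially as pendants; symmetrically, I must rule out intermediate configurations mixing a moderately large clique with moderately dense attachment, since these would again create a $P_{k+1}$. Two further delicate points are the even-$k$ case, where the extremal core is the clique $K_{\lceil(k+1)/2\rceil}$ and the single extra edge of $S^{+}_{n,\beta}$ must be accounted for exactly, and the small-$n$ regime with $n$ close to $k$, where $f_1$ dominates $f_2$ and the induction base must be checked directly. Handling these carefully is what upgrades the structural dichotomy into the sharp bound $\max\{f_1(n),f_2(n)\}$.
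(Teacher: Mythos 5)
The paper offers no proof to compare against: Theorem \ref{lu-jie} is imported verbatim from Balister--Gy\H{o}ri--Lehel--Schelp \cite{2008-P}, so your attempt must be judged on its own terms. On those terms it is a plan, not a proof. Your preparatory work is correct: with $\beta=\lfloor(k-1)/2\rfloor$ one indeed has $f_2(n)=e(S_{n,\beta})$ for odd $k$ and $f_2(n)=e(S^{+}_{n,\beta})$ for even $k$, the $P_{k+1}$-freeness checks are right, and the crossing-chord rotation observation is sound. But the ``structural dichotomy'' that carries the whole theorem is announced (``I expect a structural dichotomy''), never derived, and you yourself label every step needed to derive it as ``the hard part.'' A proof sketch whose hard part consists of unproved structural lemmas has not established the bound; those lemmas are essentially the content of Kopylov's argument and of \cite{2008-P}.

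Three concrete gaps. (i) Your threshold identity $\beta+1=\lceil(k+1)/2\rceil$ holds only for odd $k$; for even $k$ one has $\beta+1=k/2$, so the cycle length $2\beta+2$ guaranteed in the core is $k$, not $k+1$, and the contradiction then requires the connectivity-extension step (a $k$-cycle plus a path to one of the $n-k>0$ outside vertices yields $P_{k+1}$), which you do not spell out in the core case. (ii) ``Being $P_{k+1}$-free forces $D$ to be a clique'' follows from nothing you invoke; what the Dirac-type argument plus connectivity actually gives is only the order bound $|D|\le k-1$. (iii) This feeds the fatal accounting problem: disintegration yields $e(G)\le\binom{|D|}{2}+\beta(n-|D|)$, and for $|D|$ near $k-1$ and $\beta\ge 2$ this exceeds $\max\{f_1(n),f_2(n)\}$. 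Closing it requires proving that a large core forces the outside vertices to attach as pendant edges at a \emph{single} common vertex --- note that pendants at two distinct vertices of a $K_{k-1}$ already create a path of order $(k-1)+2=k+1$, which shows how rigid the needed statement is --- and you state this as a goal without any mechanism. Even your clean branch is loose as written: complete disintegration gives only $e(G)\le\beta n$, whereas $f_2(n)<\beta n$; to recover the constant you must halt the process when exactly $\beta+1$ (odd $k$) or $\beta+2$ (even $k$) vertices survive and bound the remnant by a clique, at which point you are back inside the unresolved case analysis of (ii)--(iii).
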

	
	\begin{theorem}[{Lan, Li, Shi and Tu~\cite[]{2019-L}}]\label{xing-jie1}
		If $\ell\geq 3$ and $n\geq \ell +1$, then 
		\begin{equation*}
			{\rm{ex}}(n,\,S_{\ell})\leq \bigg{\lfloor}\frac{(\ell -1)n}{2}\bigg{\rfloor},
		\end{equation*}
	with one extremal graph is the $(\ell -1)$-regular graph of order  $n$.
	\end{theorem}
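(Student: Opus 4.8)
The plan is to translate the forbidden-subgraph condition into a pure degree constraint. First I would observe that the star $S_\ell = K_{1,\ell}$ consists of a single centre joined to $\ell$ distinct leaves, so a graph $G$ contains a copy of $S_\ell$ as a subgraph exactly when some vertex of $G$ has at least $\ell$ neighbours. Consequently $G$ is $S_\ell$-free if and only if every vertex has degree at most $\ell-1$, i.e. $\Delta(G)\leq \ell-1$. This equivalence is the only structural input the proof needs, and it reduces the Tur\'{a}n problem to a counting problem over bounded-degree graphs.

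With this reformulation in hand, the upper bound is immediate from the handshake lemma. Summing degrees over all vertices of an $S_\ell$-free graph $G$ of order $n$ gives $2e(G)=\sum_{v\in V(G)}\deg(v)\leq (\ell-1)n$, so that $e(G)\leq (\ell-1)n/2$. Since $e(G)$ is a nonnegative integer, I may replace the right-hand side by its floor, obtaining $e(G)\leq \lfloor (\ell-1)n/2\rfloor$, which is precisely the claimed inequality.

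For sharpness I would exhibit an extremal graph realising the bound. When $(\ell-1)n$ is even, a $(\ell-1)$-regular graph on $n$ vertices exists, since the standard existence criterion for regular graphs requires only that $(\ell-1)n$ be even and that $\ell-1\leq n-1$, the latter being guaranteed by the hypothesis $n\geq \ell+1$. Such a regular graph has maximum degree $\ell-1$, hence is $S_\ell$-free by the equivalence above, and it has exactly $(\ell-1)n/2=\lfloor (\ell-1)n/2\rfloor$ edges, attaining the upper bound. I do not anticipate any real obstacle in this argument; the only point requiring a moment's care is the parity/existence condition for the regular graph, which is settled by the classical fact that a $d$-regular graph on $n$ vertices exists if and only if $dn$ is even and $0\leq d\leq n-1$.
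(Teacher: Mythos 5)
Your proof is correct: the observation that $G$ is $S_\ell$-free if and only if $\Delta(G)\leq \ell-1$, followed by the handshake lemma and the integrality of $e(G)$, gives the bound, and the existence criterion for $(\ell-1)$-regular graphs (with $n\geq \ell+1$ ensuring $\ell-1\leq n-1$) settles the extremal example. The paper states this theorem as a cited result of Lan, Li, Shi and Tu and includes no proof of its own, but your argument is the standard one for a single forbidden star, and your parity caveat --- that the $(\ell-1)$-regular graph exists, and hence witnesses sharpness, exactly when $(\ell-1)n$ is even, which is also the only case in which the stated extremal graph makes sense --- is precisely the right point of care.
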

	
	 \begin{theorem}[{Li, Yin and Li~\cite[]{2022-L}}]\label{xing-jie2}
	 	If $k\geq 2$ and $\ell \geq 3$, then 
	 \small{	\begin{equation*}
	 		{\rm{ex}}(n,\,kS_{\ell})=\left\{
	 		\begin{aligned}
	 			&\binom{n}{2},       \quad\quad\quad\quad \quad \quad\;  \quad \quad \quad \quad \quad \quad \quad\quad\quad\quad\quad \quad \quad \quad \;\;\;if\; n<k(\ell +1),\\
	 			&\binom{k\ell +k-1}{2}+\binom{n-k\ell -k+1}{2}, \quad \quad  \;  \quad\quad  \quad\quad\quad\quad\; if \;k(\ell +1)\leq n\leq (k+1)\ell +k-1,\\
	 			&\binom{k\ell +k-1}{2}+\bigg{\lfloor}\frac{(\ell -1)(n-k\ell -k+1)}{2}\bigg{\rfloor},  \quad \quad \quad \quad \quad if\;(k+1)\ell +k\leq n<\frac{k\ell ^2+2k\ell +2k-2}{2},\\
	 			&\binom{k-1}{2}+(n-k+1)(k-1)+\bigg{\lfloor} \frac{(\ell -1)(n-k+1)}{2} \bigg{\rfloor}, \;if\;n\geq \frac{k\ell ^2+2k\ell+2k-2 }{2}.
	 		\end{aligned}\right.
	 	\end{equation*}}
	 \end{theorem}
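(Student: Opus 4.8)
The plan is to prove matching lower and upper bounds, realizing the four ranges of $n$ by four explicit extremal graphs and controlling an arbitrary $kS_\ell$-free graph through a maximum family of disjoint stars together with Theorem~\ref{xing-jie1}. Throughout, recall that $S_\ell$ has order $\ell+1$, so $kS_\ell$ has $k(\ell+1)$ vertices.

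\emph{Lower bound.} I would first record the four candidate graphs and check each is $kS_\ell$-free with the stated size. For $n<k(\ell+1)$ the graph $K_n$ works trivially, as it has too few vertices to contain $kS_\ell$. For $k(\ell+1)\le n\le (k+1)\ell+k-1$ take $G_2=K_{k\ell+k-1}\cup K_{\,n-k\ell-k+1}$: the clique $K_{k\ell+k-1}=K_{k(\ell+1)-1}$ admits at most $k-1$ pairwise disjoint stars, while in this range the second clique has at most $\ell$ vertices and hence contains no $S_\ell$ at all. For $(k+1)\ell+k\le n<n_0$ replace the second clique by a maximum $S_\ell$-free (i.e.\ $(\ell-1)$-regular) graph to get $G_3$; again the clique yields at most $k-1$ stars and the sparse part yields none. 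For $n\ge n_0$ take $G_4=K_{k-1}\vee H$ with $H$ an $(\ell-1)$-regular graph on $n-k+1$ vertices: since stars are connected and pairwise disjoint, each of the $k-1$ vertices of $K_{k-1}$ lies in at most one star, so among any $k$ disjoint stars at least one would avoid $K_{k-1}$ entirely and lie in $H$, contradicting $\Delta(H)\le \ell-1$. Counting edges in each graph, and checking that the counts agree at $n=(k+1)\ell+k-1$ and at the crossover $n_0=\tfrac{k\ell^2+2k\ell+2k-2}{2}$, reproduces the piecewise formula.

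\emph{Upper bound for large $n$.} Here I would induct on $k$, the base $k=1$ being Theorem~\ref{xing-jie1} (which is the $k=1$ instance of the last branch). Let $G$ be an extremal $kS_\ell$-free graph and choose a maximum family of $s\le k-1$ vertex-disjoint copies of $S_\ell$ with vertex set $U$, so $|U|=s(\ell+1)$ and, by maximality, $G-U$ is $S_\ell$-free; thus $e(G)\le \sum_{u\in U}\deg(u)+e(G-U)$ with $e(G-U)\le\big\lfloor(\ell-1)(n-|U|)/2\big\rfloor$. If $\Delta(G)<T:=\ell+k(\ell+1)$ then every degree is bounded by the constant $T$ and this gives $e(G)\le \tfrac{\ell-1}{2}n+O(1)$, strictly below the Case~4 value $\sim (k-1)n+\tfrac{\ell-1}{2}n$, so $G$ is not extremal. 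Hence some vertex $v$ has $\deg(v)\ge \ell+(k-1)(\ell+1)$, and for any such $v$ the graph $G-v$ is $(k-1)S_\ell$-free: a $(k-1)S_\ell$ in $G-v$ would occupy $(k-1)(\ell+1)$ vertices, leaving $v$ with at least $\ell$ neighbors outside it and producing a $k$-th disjoint star. Therefore $e(G)\le \mathrm{ex}(n-1,(k-1)S_\ell)+(n-1)$, and a direct computation shows the right-hand side equals the Case~4 value; equality forces $\deg(v)=n-1$ with $G-v$ extremal, recovering $G_4$.

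\emph{Intermediate ranges and the main obstacle.} For $k(\ell+1)\le n<n_0$ the deletion argument no longer lands on the extremal graph (which is disconnected and has no universal vertex), so I would optimize directly from $e(G)=e(G[U])+e(U,\,V\setminus U)+e(G-U)$. The key point is that the cross-edges $e(U,\,V\setminus U)$ are severely constrained: if a vertex of $U$, or a few vertices of $U$ together, sent too many edges into the $S_\ell$-free part $G-U$, one could reroute leaves and enlarge the disjoint-star family beyond $s$, contradicting maximality. Carrying this through shows that the size is maximized by absorbing $U$ together with just enough further vertices into a single clique of order $k(\ell+1)-1$ — the largest clique still free of $kS_\ell$ — and leaving the remaining vertices either complete (when at most $\ell$ of them remain, Case~2) or a maximum $S_\ell$-free graph (Case~3), after which comparison with the join count of $G_4$ pins down the threshold $n_0$. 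The hard part is exactly this intermediate optimization: bounding $e(G[U])$ and $e(U,\,V\setminus U)$ simultaneously and proving that no hybrid between the clique and the join structure beats both, which is what forces the precise crossover $n_0=\tfrac{k\ell^2+2k\ell+2k-2}{2}$ and the switch among the three nontrivial forms.
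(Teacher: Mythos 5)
First, a point of reference: the paper does not prove this statement at all --- Theorem~\ref{xing-jie2} is quoted from Li, Yin and Li~\cite{2022-L} and used as a black box, so your proposal can only be judged on its own merits. On those merits, your lower-bound half is correct and complete: the four constructions are the right ones, the freeness arguments (in particular the connectivity argument showing $K_{k-1}\vee H$ with $\Delta(H)\leq \ell-1$ is $kS_\ell$-free) are sound, and the edge counts match the four branches. Your Case~4 upper-bound skeleton is also essentially right: if some vertex $v$ has $d(v)\geq \ell+(k-1)(\ell+1)$ then $G-v$ is $(k-1)S_\ell$-free, and the recursion $e(G)\leq (n-1)+{\rm ex}(n-1,(k-1)S_\ell)$ is exactly tight against the fourth formula, with Theorem~\ref{xing-jie1} as base case. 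However, even here there is a quantitative gap: your dichotomy disposes of the low-maximum-degree case only with an $O(1)$ error of order roughly $k^2\ell^2$ (from $|U|\cdot\Delta\leq (k-1)(\ell+1)(k\ell+k-2)$), which can exceed the linear gain $(k-1)n$ when $n$ is near the stated threshold $n_0=\tfrac{k\ell^2+2k\ell+2k-2}{2}\approx \tfrac{k\ell^2}{2}$. So your argument proves branch~4 only for $n$ sufficiently large, not for all $n\geq n_0$ as the theorem asserts.

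The more serious gap is in the intermediate ranges, and it is not merely that you deferred the computation: the key structural claim you lean on is false. You assert that if vertices of $U$ send too many edges into the $S_\ell$-free part $G-U$, one could ``reroute leaves and enlarge the disjoint-star family beyond $s$.'' The graph $K_{k-1}\vee H$ itself refutes this: take the maximum family of $s=k-1$ disjoint stars centered at the $K_{k-1}$ vertices; every center has degree $n-1$, the cross-edge count $e(U,V\setminus U)$ is as large as it could possibly be, and yet no $(s+1)$-st disjoint star exists, since any further star would have to live inside $H$. Maximality of the star family constrains essentially nothing about edges incident to the star centers; the genuine content of Cases~2 and~3 --- and of the exact crossover at $n_0$, where the clique construction and the join construction trade places --- is precisely the optimization you label ``the hard part,'' and no mechanism in your sketch carries it out. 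A workable route (the standard one for such results) distinguishes the centers from the leaves of the maximal family and exploits that a vertex of $V\setminus U$ with $\geq \ell$ neighbours avoiding the centers would yield an extra star, which bounds cross-degrees from the \emph{outside} rather than from $U$; as written, your proposal establishes the theorem's lower bounds and the asymptotic version of branch~4, but not the exact piecewise statement.
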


	 \begin{theorem}[{Fang and Yuan~\cite[]{2022-F}}]\label{F-1}
	 	Suppose $n=k+d(\ell -1)+r\geq (\ell ^2-\ell +1)k+\frac{\ell ^2+3\ell -2}{2}$, where $k\geq 1$, $\ell \geq 4$ and $0\leq r< \ell -1$. Then
	 	\begin{equation*}
	 		{\rm{ex}}\left(n,\,kS_{\ell -1}\cup P_{\ell}\right)=\left(k+\frac{\ell}{2}-1\right)n-\frac{k^2+(\ell -1)(k+r)-r^2}{2}.
	 	\end{equation*}
 	Moreover,
	 	\begin{equation*}
	 		{\rm{Ex}}(n,\,kS_{\ell -1}\cup P_{\ell})=\left\{
	 		\begin{aligned}
	 			&\{K_k\vee (dK_{\ell -1}\cup K_r),\;S_{n,\;k+\frac{\ell }{2 }-1}\}, \;\; \ell \;is\; even,\; and\;r=\frac{\ell }{2}\;or\;r=\frac{\ell -2}{2},\\
	 		&\{K_k\vee (dK_{\ell -1}\cup K_r)\}, \;\;\;\;\;\;\;\;\;\;\;\;\;\;\;\;\,\; otherwise.
	 		\end{aligned}\right.
	 	\end{equation*}
	 \end{theorem}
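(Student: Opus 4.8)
The plan is to establish matching lower and upper bounds for $\mathrm{ex}(n,H)$, where $H:=kS_{\ell-1}\cup P_\ell$, and to read off the extremal family from the equality analysis. For the lower bound I would take $G_0=K_k\vee(dK_{\ell-1}\cup K_r)$ and first verify that it is $H$-free. The key observation is that after deleting the $k$ vertices of the clique $K_k$, every component of the remainder $dK_{\ell-1}\cup K_r$ has at most $\ell-1<\ell$ vertices; hence each star $S_{\ell-1}$ and the path $P_\ell$ of a putative copy of $H$, all of which have $\ell$ vertices, must use at least one vertex of $K_k$. Since $H$ has $k+1$ pairwise vertex-disjoint components while $K_k$ supplies only $k$ vertices, no embedding exists. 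A direct count then gives $e(G_0)=(k+\tfrac{\ell}{2}-1)n-\tfrac{k^2+(\ell-1)(k+r)-r^2}{2}$, and the same bookkeeping produces the second graph $S_{n,\,k+\ell/2-1}$ in the even case.

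For the upper bound I would induct on $k$, the base case $k=0$ being the Erd\H{o}s--Gallai bound $\mathrm{ex}(n,P_\ell)\le\frac{(\ell-2)n}{2}$ from \cite{1959}, whose extremal graphs (disjoint copies of $K_{\ell-1}$, with one further family when $\ell$ is even) are classically known. The engine is a deletion lemma: if $G$ is $H$-free and has a vertex $v$ with $\deg(v)\ge(k+1)\ell-1$, then $G-v$ is $(k-1)S_{\ell-1}\cup P_\ell$-free. Indeed, any copy of $(k-1)S_{\ell-1}\cup P_\ell$ in $G-v$ occupies only $k\ell$ vertices, so $v$ keeps at least $\deg(v)-k\ell\ge\ell-1$ neighbours outside it; these, with $v$ as centre, form an extra $S_{\ell-1}$ and complete a forbidden $H$. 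Granting this, let $v$ be a vertex of maximum degree. If $\deg(v)\ge(k+1)\ell-1$, then $e(G)=\deg(v)+e(G-v)\le(n-1)+\mathrm{ex}(n-1,(k-1)S_{\ell-1}\cup P_\ell)$, and a short computation shows the right-hand side equals $\mathrm{ex}(n,H)$, so the induction closes; moreover equality forces $\deg(v)=n-1$ and $G-v$ extremal for the smaller instance.

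The remaining case $\Delta(G)\le(k+1)\ell-2$ I expect to be the crux, and I would split it once more. If $G$ contains no $kS_{\ell-1}$, then Theorem~\ref{xing-jie2} gives $e(G)\le\mathrm{ex}(n,kS_{\ell-1})$, whose leading coefficient $k+\tfrac{\ell}{2}-2$ is strictly below that of the target; the hypothesis $n\ge(\ell^2-\ell+1)k+\frac{\ell^2+3\ell-2}{2}$ is exactly what makes this linear gap decisive. Otherwise $G$ contains a copy $F$ of $kS_{\ell-1}$ on $k\ell$ vertices, and $H$-freeness forces $G-V(F)$ to be $P_\ell$-free; bounding $e(G-V(F))\le\frac{(\ell-2)n}{2}$ by \cite{1959} and the edges meeting $V(F)$ by $k\ell\cdot\Delta(G)=O(1)$ yields $e(G)\le\frac{(\ell-2)n}{2}+O(1)$, again strictly below the target for $k\ge1$. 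Thus no extremal graph has bounded maximum degree, so every extremal $G$ carries a universal vertex.

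Finally, to pin down $\mathrm{Ex}(n,H)$ I would iterate the equality case: each extremal $G$ has a universal vertex $v$ with $G-v$ extremal for $(k-1)S_{\ell-1}\cup P_\ell$, and peeling off $k$ such vertices reduces $G$ to an extremal $P_\ell$-free graph on $n-k$ vertices. Re-joining the $k$ universal vertices turns the clique family into $K_k\vee(dK_{\ell-1}\cup K_r)$ and, in the even case, the Erd\H{o}s--Gallai alternative into $S_{n,\,k+\ell/2-1}$, the latter surviving precisely when $r\in\{\tfrac{\ell-2}{2},\tfrac{\ell}{2}\}$, which is where the two $P_\ell$-extremal configurations tie. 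The main obstacle is therefore the bounded-degree analysis together with the sharp equality discussion: one must confirm that both auxiliary bounds fall strictly under $\mathrm{ex}(n,H)$ in the stated range, and that equality in the inductive step always forces a genuinely universal vertex, so that the full extremal structure is transmitted faithfully down the recursion.
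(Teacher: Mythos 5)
First, a point of order: the paper does not prove Theorem~\ref{F-1} at all --- it is quoted from Fang and Yuan~\cite{2022-F}, and only its consequence~(\ref{upbound}) is used later --- so your proposal can only be assessed on its own merits, not against an in-paper argument. Much of your skeleton is sound. The construction $K_k\vee(dK_{\ell-1}\cup K_r)$ is $H$-free exactly by your component count (each of the $k+1$ components of $H=kS_{\ell-1}\cup P_{\ell}$ has $\ell$ vertices, while every component of $dK_{\ell-1}\cup K_r$ has at most $\ell-1$), and its size matches the stated formula; so does $e(S_{n,\,k+\ell/2-1})$, precisely when $r\in\{\frac{\ell}{2},\frac{\ell-2}{2}\}$. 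Your deletion lemma is correct ($(k-1)S_{\ell-1}\cup P_{\ell}$ spans $k\ell$ vertices, so $\deg(v)\geq (k+1)\ell-1$ leaves $\ell-1$ fresh neighbours), and the recursion closes numerically because $r$ is unchanged under $(n,k)\mapsto(n-1,k-1)$, giving $f(n,k)=(n-1)+f(n-1,k-1)$ for the target value $f$. Your case ``$G$ contains no $kS_{\ell-1}$'' also works: comparing $f(n,k)$ with the value in Theorem~\ref{xing-jie2}, the $\frac{k^2}{2}$ terms cancel and the gap is $n-k-O(\ell^2)>0$ in the stated range.

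The genuine gap is your remaining case, $\Delta(G)\leq(k+1)\ell-2$ with a copy $F$ of $kS_{\ell-1}$ present, which you yourself call the crux. Your estimate is $e(G)\leq \frac{(\ell-2)n}{2}+k\ell\,\Delta(G)$, and the additive term is up to $k\ell\bigl((k+1)\ell-2\bigr)\sim k^2\ell^2$; this is \emph{not} negligible at the stated threshold, because the target exceeds $\frac{(\ell-2)n}{2}$ only by $kn-\frac{k^2+(\ell-1)(k+r)-r^2}{2}$. One needs roughly $n>(k+1)\ell^2+\frac{k}{2}$, which overshoots the hypothesis $n\geq(\ell^2-\ell+1)k+\frac{\ell^2+3\ell-2}{2}$ by about $k\bigl(\ell-\frac{1}{2}\bigr)$. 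Concretely, take $\ell=4$, $k=100$, $n=1313$ (the threshold, with $r=1$): the target is $101\cdot 1313-5151=127462$, while your bound gives $913+400\cdot 402=161713$, so no contradiction is reached and the induction does not close for large $k$. Repairing this requires a sharper count of $e\bigl(V(F),V(G)\setminus V(F)\bigr)$ exploiting the $P_{\ell}$-free structure of $G-V(F)$ and $H$-freeness (in the spirit of Lemma~\ref{u2} of this paper) rather than the trivial $\Delta$-bound. Secondarily, your final peeling step needs more than the Erd\H{o}s--Gallai \emph{bound}: it needs the full characterization of $P_{\ell}$-extremal graphs, including the mixed configurations $d''K_{\ell-1}\cup\bigl(K_{(\ell-2)/2}\vee\overline{K}_t\bigr)$, and a verification that joining these with $K_k$ always creates a copy of $H$ unless $d''=0$ (yielding $S_{n,\,k+\ell/2-1}$) --- you gesture at this, but it is exactly where the condition $r\in\{\frac{\ell}{2},\frac{\ell-2}{2}\}$ does its work and must be argued.
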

 Based on the result of Theorem \ref{F-1}, we have 
 \begin{equation}\label{upbound}
 		{\rm{ex}}(n,\,kS_{\ell -1}\cup P_{\ell })\leq \left(k+\bigg{\lfloor}\frac{\ell}{2}\bigg{\rfloor}-\frac{1}{2}\right)(n-1).
 \end{equation}

 \begin{theorem}[{Fang and Yuan~\cite[]{2022-F}}]\label{F-3}
 		Suppose $n\geq (4\ell ^2-2\ell +1)k_1+(2\ell ^2+3\ell -4)k_2+3$, where $k_1\geq 1$, $k_2\geq 2$ and $\ell \geq 2$. Then
 	\begin{equation*}
 		{\rm{Ex}}(n,\,k_1S_{2\ell -1}\cup k_2P_{2\ell})=\{S_{n,\,k_1+\ell k_2-1}\}.
 	\end{equation*}
 \end{theorem}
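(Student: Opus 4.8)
\noindent\emph{Proof strategy.}\ Write $F=k_1S_{2\ell-1}\cup k_2P_{2\ell}$ and $h=k_1+\ell k_2-1$, so that $S_{n,h}=K_h\vee\overline{K}_{n-h}$ has $e(S_{n,h})=\binom{h}{2}+h(n-h)$. The plan is to prove the two halves separately: that $S_{n,h}$ is $F$-free, giving $\mathrm{ex}(n,F)\ge e(S_{n,h})$, and that any $F$-free $G$ on $n$ vertices with $e(G)\ge e(S_{n,h})$ must equal $S_{n,h}$, giving simultaneously the upper bound and the uniqueness claim. For the first half I would count how many clique vertices an embedded copy of $F$ is forced to use. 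In $S_{n,h}$ every edge meets the clique $K_h$, so along a $P_{2\ell}$ the independent-set vertices form an independent set of $P_{2\ell}$ and hence number at most $\ell$; thus each $P_{2\ell}$ consumes at least $2\ell-\ell=\ell$ clique vertices, while each $S_{2\ell-1}$ consumes at least one (if its centre lies in the independent set then all $2\ell-1$ leaves are clique vertices). A copy of $F$ would therefore require at least $k_1+\ell k_2=h+1>h$ clique vertices, which is impossible, so $S_{n,h}$ is $F$-free.

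For the upper bound let $G$ be an extremal (hence edge-maximal) $F$-free graph with $e(G)\ge e(S_{n,h})$. The goal is to isolate a set $R$ of exactly $h$ dominating vertices, each of degree at least $n-M$ for a constant $M=M(k_1,k_2,\ell)$, such that $V(G)\setminus R$ is independent; the edge count then forces $G=S_{n,h}$. Granting such an $R$, the decisive mechanism is short. Suppose $V(G)\setminus R$ contained a single edge $xy$. Using the $h$ hubs I would realise the $k_1$ stars with one hub each, realise $k_2-1$ of the paths in the canonical form $u_1r_1u_2r_2\cdots u_\ell r_\ell$ with $\ell$ hubs each, and realise the last path as $x,y,r_1,c_1,r_2,c_2,\ldots,r_{\ell-1},c_{\ell-1}$, a $P_{2\ell}$ routed through the edge $xy$ that consumes only $\ell-1$ hubs. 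The hub total is $k_1+\ell(k_2-1)+(\ell-1)=h$, exactly the supply in $R$, so $G$ would contain $F$. Since each hub misses at most $M$ vertices and $n$ is large, the $O(1)$ connectors $u_i,c_j$ can be chosen distinct and disjoint from one another and from $x,y$; hence $V(G)\setminus R$ is edgeless. Combining $e(G)\ge e(S_{n,h})$ with $e(G)=e(G[R])+e(R,V\setminus R)\le\binom{h}{2}+h(n-h)$ forces equality throughout, so $R$ is a clique completely joined to the independent set $V(G)\setminus R$, i.e.\ $G=S_{n,h}$ and $\mathrm{Ex}(n,F)=\{S_{n,h}\}$.

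What remains is to produce $R$, and this is the step I expect to be the main obstacle. A first approximation comes from counting: one first checks that $G$ contains a copy of $k_1S_{2\ell-1}\cup(k_2-1)P_{2\ell}$, since otherwise $e(G)$ would be bounded by the extremal number for a forest with fewer components, namely $e(S_{n,h-\ell})<e(S_{n,h})$ for large $n$; after deleting such a copy the remainder is $P_{2\ell}$-free and so has at most $(\ell-1)n$ edges by the Erd\H{o}s--Gallai bound (cf.\ Theorem~\ref{lu-jie} and \cite{1959}). Comparing this sparsity with $e(G)\ge e(S_{n,h})$ localises the number of heavy (linearly large degree) vertices to within a constant of $h$. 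The delicate part is upgrading ``linearly large degree'' to ``degree $\ge n-M$'' and, simultaneously, guaranteeing that any $\le\ell$ chosen hubs share enough common neighbours to host the connectors $u_i,c_j$ of a $P_{2\ell}$: two vertices of degree $\tfrac12 n$ may have no common neighbour, so large degree alone does not suffice. I would resolve this by a bootstrapping/stability argument exploiting edge-maximality (if a hub missed a linear set of vertices, adding one such edge would keep $G$ $F$-free, contradicting maximality) together with the sparsity of $G-R$ and the star Tur\'{a}n bound of Theorem~\ref{xing-jie1} to control vertices of intermediate degree; the hypothesis that $n$ is large relative to $k_1,k_2,\ell$ is exactly what makes the common-neighbour and disjointness bookkeeping go through.
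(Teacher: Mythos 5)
You should first note that this paper contains no proof of Theorem~\ref{F-3} at all: it is quoted from Fang and Yuan \cite{2022-F}, so your attempt can only be judged against what a complete proof must contain. Your lower-bound half is complete and correct: in $S_{n,h}$ every edge meets the clique, the path vertices lying in the independent part form an independent set of $P_{2\ell}$ and hence number at most $\ell$, each star consumes at least one clique vertex, and $k_1+\ell k_2=h+1>h$ kills any embedding. The decisive uniqueness mechanism is also the right one: routing a single path as $x,y,r_1,c_1,\ldots,r_{\ell-1},c_{\ell-1}$ through an extra edge uses only $\ell-1$ hubs, for a total of $k_1+\ell(k_2-1)+(\ell-1)=h$.

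The genuine gap is the step you yourself flag as the main obstacle: producing the hub set $R$, which is the bulk of any actual proof and is only gestured at. Worse, your sketch has a circularity. You write that if a hub missed a linear set of vertices, ``adding one such edge would keep $G$ $F$-free, contradicting maximality''; but in an edge-maximal $F$-free graph adding \emph{any} missing edge creates a copy of $F$ by definition, so what is actually needed is a replacement argument rerouting that copy off the added edge --- and such rerouting already presupposes the near-domination and common-neighbourhood structure you are trying to establish. Relatedly, your last-path construction silently assumes $y\sim r_1$ for the specific endpoint $y$ of the offending edge: knowing only that each hub has degree at least $n-M$, all $h$ hubs could miss that particular $y$, so the independence of $V(G)\setminus R$ does not follow until you have separately shown that every vertex incident with an edge of $G[V(G)\setminus R]$ sees a hub. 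Two further defects: the identity $\mathrm{ex}(n,\,k_1S_{2\ell-1}\cup(k_2-1)P_{2\ell})=e(S_{n,\,h-\ell})$ fails at the base case $k_2=2$, where the forest has a single even path and the extremal graphs are of the type $K_{k_1}\vee(dK_{2\ell-1}\cup K_r)$ as in Theorem~\ref{F-1} (the inequality $\mathrm{ex}(n,F')<e(S_{n,h})$ that you actually need does still hold for $n$ large); and your ``$n$ large'' bookkeeping cannot recover the explicit linear threshold $n\geq(4\ell^2-2\ell+1)k_1+(2\ell^2+3\ell-4)k_2+3$ of the statement, so at best you would prove the theorem for a weaker range of $n$. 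As it stands the proposal is a plausible programme --- essentially the hub-localization method this paper deploys for its \emph{spectral} theorem in Sections 4 and 5, transplanted to the edge-count setting --- but not a proof.
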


\begin{theorem}[{Fang and Yuan~\cite[]{2022-F}}]\label{F-4}
	Suppose $n\geq 21k+38$ where $k\geq 1$. Then 
		\begin{equation*}
		{\rm{Ex}}(n,\,kS_{4}\cup 2P_5)=\{S^{+}_{n,\,k+3}\}.
	\end{equation*}
\end{theorem}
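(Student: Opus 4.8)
The plan is to prove the two matching bounds: that $S^{+}_{n,k+3}$ is itself $kS_4\cup 2P_5$-free, which gives $\mathrm{ex}(n,kS_4\cup 2P_5)\ge e(S^{+}_{n,k+3})$, and that it is the \emph{unique} edge-maximiser among all $kS_4\cup 2P_5$-free graphs of order $n$. Write $S^{+}_{n,k+3}=K_{k+3}\vee(K_2\cup\overline{K}_{n-k-5})$, let $A$ be the dominating clique ($|A|=k+3$) and $B=V\setminus A$, so that $G[B]$ is a single edge $uv$ together with isolated vertices, in particular $\Delta(G[B])\le 1$. The crux of the free-ness part is an ``$A$-budget'' inequality: every copy of $P_5$ meets $A$ in at least two vertices, and every copy of $S_4=K_{1,4}$ meets $A$ in at least one vertex. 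Indeed a $P_5$ has four edges, at most one of which lies in $B$, so at least three are incident to $A$; if the path met $A$ in a single vertex $a$, then $a$ (of path-degree $\le 2$) could be incident to at most two of these edges, a contradiction, so $|V(P_5)\cap A|\ge 2$. For $S_4$, a centre in $B$ would force at least three of its four leaves into $A$ since $\Delta(G[B])\le 1$, while a centre in $A$ already spends one $A$-vertex. As the $k+2$ components of $kS_4\cup 2P_5$ are vertex-disjoint, any copy would consume at least $k\cdot 1+2\cdot 2=k+4$ distinct vertices of $A$, impossible since $|A|=k+3$. Hence $S^{+}_{n,k+3}$ is $kS_4\cup 2P_5$-free, and $e(S^{+}_{n,k+3})=\binom{k+3}{2}+(k+3)(n-k-3)+1$.

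For the converse, let $G$ be a $kS_4\cup 2P_5$-free graph on $n\ge 21k+38$ vertices with $e(G)$ maximum. By the lower bound $e(G)\ge\binom{k+3}{2}+(k+3)(n-k-3)+1$, so $G$ is dense with roughly $(k+3)n$ edges. I fix a threshold $D$ that is linear in $k$, comfortably above $5k+10$ but below $n$, and set $A=\{v\in V(G):d_G(v)\ge D\}$. The heart of the argument is to show $|A|\le k+3$. Supposing $|A|\ge k+4$, I would assemble $kS_4\cup 2P_5$ greedily: take $k$ of the high-degree vertices as star centres and attach to each four still-unused neighbours, then route the two $P_5$'s through the remaining high-degree vertices. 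Because $\mathcal{F}=kS_4\cup 2P_5$ has only $5k+10$ vertices while each hub retains at least $D-O(k)$ free neighbours at every stage, the $k+2$ copies can be kept vertex-disjoint, contradicting $kS_4\cup 2P_5$-freeness.

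With $|A|\le k+3$ secured, I bound $e(G)\le\binom{|A|}{2}+|A|(n-|A|)+e(G-A)$, reducing everything to controlling $e(G-A)$. Here the $A$-budget reasoning runs in reverse. If $G-A$ contained two independent edges, or a path on three vertices, then exploiting the abundant $A$--$B$ edges one of the two $P_5$'s could be built using only a single vertex of $A$ (for instance $u_1v_1\,a\,u_2v_2$ along two disjoint $B$-edges through one hub $a$), so that $k+3$ core vertices would already realise all of $\mathcal{F}$, a contradiction. Thus $e(G-A)\le 1$, and therefore $e(G)\le\binom{k+3}{2}+(k+3)(n-k-3)+1$. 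Tracing the equality case then forces $|A|=k+3$, every vertex of $A$ adjacent to all others, and $G-A$ carrying exactly one edge, i.e.\ $G\cong S^{+}_{n,k+3}$; a standard saturation step (adding a missing $A$-edge preserves freeness by the $A$-budget count and strictly raises $e(G)$) closes the uniqueness.

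The main obstacle is the greedy embedding invoked twice above. Stars embed trivially, since a high-degree vertex together with four private neighbours suffices, but producing \emph{two} disjoint copies of $P_5$ that simultaneously avoid the $k$ stars and each other is delicate: a $P_5$ must be threaded through hubs and low-degree vertices rather than hung off one centre. Calibrating the threshold $D$ and the hypothesis $n\ge 21k+38$ so that both the $|A|\le k+3$ step and the $e(G-A)\le 1$ step go through simultaneously is precisely where the quantitative work lies.
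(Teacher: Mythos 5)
The paper itself contains no proof of this statement: Theorem~\ref{F-4} is quoted verbatim from Fang and Yuan's paper \cite{2022-F}, so there is no internal argument to compare yours against. Judged on its own terms, the first half of your proposal is correct and complete: the ``$A$-budget'' count showing $S^{+}_{n,\,k+3}$ is $kS_4\cup 2P_5$-free is exactly right --- since $G[B]$ has at most one edge, every $P_5$ must meet the clique $A$ in at least two vertices (a path with only one vertex in $A$ could have at most two of its four edges incident to $A$, yet at least three must be), every $S_4$ meets $A$ at least once, and $k\cdot 1+2\cdot 2=k+4>|A|=k+3$ rules out the forest.

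The converse half, however, has genuine gaps at precisely its two pivotal steps, and as written both would fail. First, the claim that $|A|\geq k+4$ high-degree vertices let you greedily assemble $kS_4\cup 2P_5$ is unsupported: stars embed off a single hub, but to thread a $P_5$ through two hubs $a_i,a_j$ you need a vertex of $N(a_i)\cap N(a_j)$ or an edge joining their neighbourhoods, and vertices of degree $\geq D$ need not share any neighbours --- a disjoint union of large stars has arbitrarily many high-degree vertices and contains no $P_5$ whatsoever. You name this difficulty yourself (``is delicate \dots precisely where the quantitative work lies''), but naming it does not close it; nothing in the outline shows the extremal $G$ has the connectivity to route two disjoint $P_5$'s. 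Second, the step $e(G-A)\leq 1$ assumes the endpoints of edges inside $B$ are adjacent to suitable hubs (your path $u_1v_1\,a\,u_2v_2$ needs $av_1,au_2\in E(G)$), which is not automatic; worse, when $|A|<k+3$ the budget contradiction is unavailable, $G-A$ has bounded maximum degree but can a priori carry $\Theta(n)$ edges (e.g.\ disjoint triangles, which are $P_5$-free), and then $\binom{|A|}{2}+|A|(n-|A|)+e(G-A)$ no longer beats the target count, so the case analysis collapses. The missing ingredient is what the actual literature uses: induction on $k$ by deleting a star centred at a vertex of maximum degree, anchored at the base case $k=0$, i.e.\ the known Tur\'{a}n number and extremal graph of $2P_5$ (Bielak--Kieliszek \cite{2016}), combined with Erd\H{o}s--Gallai/Balister--Gy\H{o}ri-type bounds such as Theorem~\ref{lu-jie} to control the low-degree remainder. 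Without either that induction or a fully worked stability argument, your threshold $D$ and the hypothesis $n\geq 21k+38$ are never calibrated, and the outline does not yet constitute a proof of the upper bound or of uniqueness.
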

In 2010, Nikiforov \cite{2010} proposed  the spectral counterpart of Tur\'{a}n type extremal problem.
\begin{problem}\label{p1}
	Given a graph $H$, what is the maximum $\rho (G)$ of a graph $G$ of order $n$ without $H$ as a subgraph?
\end{problem}

This problem has been  intensively investigated in the literature for many classes of graphs. Guiduli \cite{1998} and  Nikiforov \cite{2017-N} independently studied the case $H=K_r$. In 2010, Nikiforov \cite{2010} studied the case $H$ is a path or cycle of specified length.  Chen, Liu and Zhang  in 2019 \cite{2019} studied  the case $H$ is a linear forest, and in 2021 they \cite{2021-M} studied the case $H$ is a star forest. For other classes of graphs, the readers may be referred to \cite{2020,2019-G,2023,2012, arxiv2}. Motivated by Problem \ref{p1}, we will give the maximum  value of the spectral radius and characterize corresponding extremal graphs for some kinds of star-path forests.
 Denote by Ex$_{sp }(n,\,H)$  the set of $H$-free graphs of order $n$ with maximum spectral radius.
 
 For any graph $F$, if its extremal graph is $S_{n,\,p}$ (resp. $S^{+}_{n,\,p}$), then we will prove that for appropriate $n$, its spectral extremal graph is also $S_{n,\,p}$ (resp. $S^{+}_{n,\,p}$).
 
\begin{theorem}\label{1}
	\begin{enumerate}[(1)]
		\item For any graph $F$, suppose $n\geq m$ and {\rm{Ex}}$(n,\,F)=\{S_{n,\,p}\}$. Then when $n\geq\, ${\rm{max}}$\,\{2^{4p},\,m^2\}$, we have {\rm{Ex}}$_{sp }(n,\,F)=\{S_{n,\,p}\}$.
		\item For any graph $F$, suppose $n\geq m$ and   {\rm{Ex}}$(n,\,F)=\{S^{+}_{n,\,p}\}$. Then when $n\geq{\rm{max}}\,\{2^{4p},\,m^2\}$, we have 
		{\rm{EX}}$_{sp }(n,\,F)=\{S^{+}_{n,\,p}\}$.
	\end{enumerate}
\end{theorem}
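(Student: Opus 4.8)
The plan is to reduce the spectral problem to the solved edge problem by proving that a spectral-extremal $F$-free graph is forced to be a spanning subgraph of $S_{n,p}$, and hence equal to it. Fix $G\in\mathrm{Ex}_{sp}(n,F)$, put $\rho=\rho(G)$, and let $\mathbf{x}$ be a Perron eigenvector. First I would extract the two inequalities handed over by the hypothesis. Because $\mathrm{Ex}(n,F)=\{S_{n,p}\}$, the graph $S_{n,p}$ is $F$-free, so maximality gives $\rho\ge\rho(S_{n,p})$; and $F$-freeness of $G$ gives $e(G)\le\mathrm{ex}(n,F)=\binom{p}{2}+p(n-p)$. Computing the Perron vector of $S_{n,p}$ (constant $a$ on the clique, constant $b$ on the independent set) shows that $\rho(S_{n,p})$ is the larger root of $t^{2}-(p-1)t-p(n-p)=0$, so
\begin{equation*}
\rho\ge\rho(S_{n,p})>\sqrt{p(n-p)}.
\end{equation*}

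Next I would localise $\mathbf{x}$. Normalising $x_{u^\ast}=\max_v x_v=1$, the eigen-equation gives $x_v\le d(v)/\rho$ for every $v$ and, after summation, $\rho\sum_v x_v=\sum_v d(v)x_v\le 2e(G)\le p(2n-p-1)$; since $\rho\sim\sqrt{pn}$, the weight of $\mathbf{x}$ is concentrated, with the typical vertex carrying weight $O(\sqrt{p/n})$. Separating the vertices of non-negligible weight produces a set $W$, and matching the dominant balance $\rho^{2}\approx p(n-p)$ of the eigen-equation against the edge budget $e(G)\le\binom{p}{2}+p(n-p)$ should pin $|W|$ to exactly $p$ and force each vertex of $W$ to be adjacent to all but $o(n)$ of the remaining vertices. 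The threshold $n\ge 2^{4p}$ enters precisely here: controlling the weight of a light vertex requires tracing the relation $x_v=\rho^{-1}\sum_{u\in N(v)}x_u$ back through the (at most $p$) heavy vertices, and the error terms compound over these like $2^{O(p)}$, so $n$ must be exponentially large in $p$ for the weight gap separating $W$ from its complement to be genuine.

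It then remains to prove that $V\setminus W$ is independent; once this is in hand, $W$ is a vertex cover of size at most $p$, so (padding $W$ to exactly $p$ vertices if needed) $G$ is a spanning subgraph of $K_p\vee\overline{K}_{n-p}=S_{n,p}$. Since $S_{n,p}$ is connected, a proper spanning subgraph of it has strictly smaller spectral radius, and $\rho\ge\rho(S_{n,p})$ then forces $G=S_{n,p}$, which is part (1). For part (2) the identical scheme runs with a single edge reserved inside $V\setminus W$, so that the target supergraph is $S^{+}_{n,p}$ and the conclusion becomes $G=S^{+}_{n,p}$. I expect the decisive obstacle to be exactly the independence of $V\setminus W$: excluding an edge between two light vertices needs a local relocation—delete the edge and rejoin a freed endpoint to $W$—that must at once preserve $F$-freeness and strictly raise the Rayleigh quotient $\mathbf{x}^{\mathrm{T}}A(G)\mathbf{x}/\mathbf{x}^{\mathrm{T}}\mathbf{x}$. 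Keeping every intermediate graph inside a copy of $S_{n,p}$ makes $F$-freeness automatic, and it is in verifying that such a relocation is always available (and, for the $S^{+}_{n,p}$ case, that the reserved edge can be placed coherently) that the hypotheses $n\ge m^{2}$—so that the edge-extremal description still governs the order-$\Theta(n)$ graphs produced—and $n\ge 2^{4p}$ are genuinely used.
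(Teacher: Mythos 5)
Your proposal is a program rather than a proof, and its two decisive steps are precisely the ones you flag with ``should'' and ``I expect'': pinning the heavy set $W$ to exactly $p$ vertices of near-full degree, and proving $V\setminus W$ independent. For an \emph{arbitrary} $F$ the independence step, as you describe it, is circular: before independence is established, $G$ is not known to sit inside a copy of $S_{n,p}$, so after deleting a light--light edge and rejoining an endpoint to $W$ the modified graph is \emph{not} ``inside a copy of $S_{n,p}$'' (other light--light edges may remain), and $F$-freeness of the intermediate graph is not automatic. Verifying it requires structural knowledge of $F$ that the hypothesis ${\rm{Ex}}(n,F)=\{S_{n,p}\}$ yields only after further work --- e.g.\ one must first deduce facts such as $F\subseteq K_{p+1,t}$ (from $e(K_{p+1,n-p-1})>{\rm{ex}}(n,F)$ for large $n$) and $F\subseteq S^{+}_{N,p}$, and also show $G[W]$ induces a clique --- none of which appears in your sketch. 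This heavy local-eigenvector machinery is exactly what the paper deploys in Sections 4--5, but only for the \emph{specific} forest $kS_{\ell-1}\cup P_{\ell}$, where copies of the forbidden graph can be built by hand; for the general Theorem 1.7 it is both unproven and far more than is needed.

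The paper's actual proof is a short global counting reduction with no structural analysis at all. Split on the minimum degree. If $\delta(G)\geq p$, the Hong--Shu--Fang bound $\rho(G)\leq \frac{\delta-1+\sqrt{8e-4\delta n+(\delta+1)^2}}{2}$ (decreasing in $\delta$), combined with $\rho(G)\geq \rho(S_{n,p})=\frac{p-1+\sqrt{4pn-4p^2+(p-1)^2}}{2}$, forces $e(G)\geq pn-\frac{p^2+p}{2}=e(S_{n,p})={\rm{ex}}(n,F)$, so $G\in{\rm{Ex}}(n,F)=\{S_{n,p}\}$ immediately --- uniqueness of the Tur\'{a}n extremal graph finishes the case. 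If $\delta(G)<p$, Nikiforov's local theorem (Theorem 2.2, whose hypothesis is exactly $n\geq 2^{4p}$) produces a subgraph $H$ of order $q\geq\lfloor\sqrt{n}\rfloor$ with $e(H)>e(S_{q,p})={\rm{ex}}(q,F)$ --- via $2e(H)\geq\rho^{2}(H)>(2p+1)q$ in its first case, and via the Hong bound again in its second --- whence $F\subseteq H\subseteq G$, a contradiction. So your attributions of the hypotheses are wrong: $n\geq 2^{4p}$ is not about compounding eigenvector error terms but is simply the hypothesis of the quoted subgraph theorem, and $n\geq m^{2}$ is not about ``order-$\Theta(n)$ graphs'' but ensures $\lfloor\sqrt{n}\rfloor\geq m$, so the Tur\'{a}n characterization (which must be assumed at every order $\geq m$) applies to the subgraph of order $q\approx\sqrt{n}$. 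To make your route work you would have to supply the missing lemmas for general $F$; the counting reduction closes the proof in under a page.
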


 The spectral extremal graphs for $k_1S_{2\ell -1}\cup k_2P_{2\ell}$ and $kS_{4}\cup 2P_{5}$ can be determined  based on the results of Theorems \ref{F-3} \textendash\,\ref{1}.

\begin{lemma}
	Suppose $k_1\geq 1$, $k_2\geq 2$, $\ell \geq 2$ and $n\geq$ {\rm{max}} $\{2^{4(k_1+\ell k_2-1)},\,[(4\ell ^2-2\ell +1)k_1+(2\ell ^2+3\ell -4)k_2+3]^2\}$. Then
	\begin{equation*}
		{\rm{Ex}}_{sp}(n,\,k_1S_{2\ell -1}\cup k_2P_{2\ell})=\{S_{n,\,k_1+\ell k_2-1}\}.
	\end{equation*}
\end{lemma}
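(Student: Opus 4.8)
The plan is to deduce this lemma directly from the two ingredients already in hand: the Tur\'{a}n-type characterization of Theorem~\ref{F-3} together with the general spectral transfer principle of Theorem~\ref{1}(1). Set $F=k_1S_{2\ell-1}\cup k_2P_{2\ell}$ and abbreviate $p=k_1+\ell k_2-1$ and $m=(4\ell^2-2\ell+1)k_1+(2\ell^2+3\ell-4)k_2+3$. The first step is simply to observe that, under the hypotheses $k_1\geq 1$, $k_2\geq 2$, $\ell\geq 2$, Theorem~\ref{F-3} asserts $\mathrm{Ex}(n,\,F)=\{S_{n,\,p}\}$ for every $n\geq m$; in particular the unique edge-extremal graph for $F$ has exactly the form $S_{n,\,p}$, which is precisely the shape required to invoke Theorem~\ref{1}(1).

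The second step is to feed these data into Theorem~\ref{1}(1). With the value of $m$ just identified and $p=k_1+\ell k_2-1$, that theorem guarantees $\mathrm{Ex}_{sp}(n,\,F)=\{S_{n,\,p}\}$ as soon as $n\geq\max\{2^{4p},\,m^2\}$. Expanding $2^{4p}=2^{4(k_1+\ell k_2-1)}$ and $m^2=[(4\ell^2-2\ell+1)k_1+(2\ell^2+3\ell-4)k_2+3]^2$ shows that this threshold coincides verbatim with the lower bound on $n$ assumed in the statement of the lemma. Hence the conclusion follows at once.

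The only point demanding a moment's care is the bookkeeping between the two thresholds: one must check that the requirement $n\geq m$ of Theorem~\ref{F-3} is subsumed by the stronger assumption $n\geq m^2$ (which holds since $m\geq 1$), so that the edge-extremal characterization is genuinely in force throughout the range in which Theorem~\ref{1}(1) is applied. I expect no further obstacle, since all the substantive work — namely showing that no $F$-free graph can exceed the spectral radius of the Tur\'{a}n-extremal $S_{n,\,p}$, presumably via a Rayleigh-quotient/eigenvector estimate linking $\rho(G)$ to $e(G)$ and to the specific structure of $S_{n,\,p}$ — has already been discharged in the proof of Theorem~\ref{1}(1). The present lemma is therefore a clean specialization of that general principle to the forbidden forest $k_1S_{2\ell-1}\cup k_2P_{2\ell}$.
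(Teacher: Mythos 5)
Your proposal is correct and is exactly the derivation the paper intends: the paper states that this lemma follows by combining the Tur\'{a}n characterization of Theorem~\ref{F-3} (giving ${\rm Ex}(n,F)=\{S_{n,p}\}$ with $p=k_1+\ell k_2-1$ for $n\geq m$) with the spectral transfer principle of Theorem~\ref{1}(1), precisely as you do. Your extra check that $n\geq m^2$ subsumes $n\geq m$ is the only bookkeeping needed, and it is handled correctly.
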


\begin{lemma}
	Suppose $k\geq 1$ and $n\geq $ {\rm{max}} $\{2^{4(k+3)},\,(21k+38)^2\}$. Then 
	\begin{equation*}
		{\rm{Ex}}_{sp}(n,\,kS_{4}\cup 2P_{5})=\{S^{+}_{n,\,k+3}\}.
	\end{equation*}
\end{lemma}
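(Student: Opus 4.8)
The plan is to obtain this lemma directly from the transference principle of Theorem~\ref{1}(2), using the Tur\'{a}n-type result of Theorem~\ref{F-4} to supply its hypothesis. Set $F=kS_{4}\cup 2P_{5}$. Theorem~\ref{F-4} asserts that for all $n\geq 21k+38$ the unique edge-extremal graph is $S^{+}_{n,\,k+3}$; that is, in the notation of Theorem~\ref{1}(2) I take $m=21k+38$ and $p=k+3$, so that {\rm{Ex}}$(n,\,F)=\{S^{+}_{n,\,p}\}$ holds for every $n\geq m$. In particular $S^{+}_{n,\,k+3}$ is automatically $F$-free, since it is the extremal graph produced by Theorem~\ref{F-4}, so no separate feasibility check is needed.

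With these parameters fixed, I would invoke Theorem~\ref{1}(2), whose conclusion {\rm{Ex}}$_{sp}(n,\,F)=\{S^{+}_{n,\,p}\}$ becomes valid once $n\geq\max\{2^{4p},\,m^{2}\}$. Substituting $p=k+3$ and $m=21k+38$ yields precisely the threshold
\begin{equation*}
	n\geq\max\{2^{4(k+3)},\,(21k+38)^{2}\}
\end{equation*}
stated in the lemma, and the conclusion reads {\rm{Ex}}$_{sp}(n,\,kS_{4}\cup 2P_{5})=\{S^{+}_{n,\,k+3}\}$. Thus the lemma follows by combining the two quoted results, with no additional argument required beyond matching the index $k+3$ of the extremal graph to the parameter $p$ of Theorem~\ref{1}(2).

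Because the substantive work has already been carried out upstream, this lemma presents no genuine obstacle of its own; the only point needing care is the bookkeeping of parameters described above. For completeness I would note where the real difficulty resides, namely internal to Theorem~\ref{1}(2): that theorem must show that any $F$-free graph whose spectral radius is at least $\rho(S^{+}_{n,\,p})$ is forced to coincide with $S^{+}_{n,\,p}$, which typically proceeds by relating $\rho(G)$ to the edge count $e(G)$ and then appealing to the uniqueness of the edge-extremal graph once $n$ is large. Here, however, Theorem~\ref{1}(2) is available as a black box, so the proof of the present lemma is immediate.
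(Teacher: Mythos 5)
Your proposal is correct and is exactly the paper's argument: the paper derives this lemma (and the companion one for $k_1S_{2\ell-1}\cup k_2P_{2\ell}$) by applying the transference principle of Theorem \ref{1}(2) with the hypothesis supplied by Theorem \ref{F-4}, taking $p=k+3$ and $m=21k+38$, which yields the stated threshold $n\geq\max\{2^{4(k+3)},\,(21k+38)^2\}$. Your parameter bookkeeping, including reading Theorem \ref{F-4} as giving ${\rm{Ex}}(n',F)=\{S^{+}_{n',\,k+3}\}$ for all $n'\geq m$ (which Theorem \ref{1}(2) implicitly needs at order $\lfloor\sqrt{n}\rfloor\geq m$), matches the paper's intended use.
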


Now we will consider spectral extremal graphs for $kS_{\ell-1}\cup P_{\ell}$. If $\ell =2$, then $kS_{\ell-1}\cup P_{\ell}=(k+1)P_2$ holds. In 2007, Feng, Yu and Zhang \cite{2007} gave the spectral extremal graph for $kP_{2}$.
\begin{theorem}[{Feng, Yu and Zhang~\cite[]{2007}}]
	Suppose $k\geq 1$ and $n\geq 2k$.
	\begin{enumerate}[(1)]
		\item If $n=2k$ or $2k+1$, then ${\rm{Ex}}_{sp}(n,\,kP_{2})=\{K_n\}$ holds.
		\item If $2k+2 \leq n< 3k+2$, then ${\rm{Ex}}_{sp}(n,\,kP_{2})=\{K_{2k+1}\cup \overline{K_{n-2k-1}}\}$ holds.
		\item If $n=3k+2$, then ${\rm{Ex}}_{sp}(n,\,kP_{2})=\{K_{2k+1}\cup \overline{K_{n-2k-1}},\,K_{k}\vee \overline{K_{n-k}}\}$ holds.
		\item If $n>3k+2$, then ${\rm{Ex}}_{sp}(n,\,kP_{2})=\{K_k\vee \overline{K_{n-k}}\}$ holds.
	\end{enumerate}
\end{theorem}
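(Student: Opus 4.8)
The key observation is that the forbidden configuration is governed by the matching number: the candidate extremal graphs $K_{2k+1}\cup\overline{K}_{n-2k-1}$ and $K_k\vee\overline{K}_{n-k}$ both have matching number exactly $k$, so I read the problem as maximising $\rho(G)$ over all graphs $G$ of order $n$ with $\nu(G)\le k$. First I would dispose of the small cases: for $n\in\{2k,2k+1\}$ the complete graph $K_n$ satisfies $\nu(K_n)=\lfloor n/2\rfloor\le k$ and has the largest spectral radius among \emph{all} graphs of order $n$, so $\mathrm{Ex}_{sp}(n,kP_2)=\{K_n\}$ at once, settling part (1). Hence I may assume $n\ge 2k+2$, where $K_n$ is no longer admissible. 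Since adding an edge never decreases the spectral radius, a spectral-extremal graph may be taken edge-maximal subject to $\nu\le k$.

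Next I would pin down the structure of edge-maximal graphs with $\nu\le k$ via the Gallai--Edmonds decomposition $V(G)=D\cup A\cup C$. Edge-maximality forces each component of $G[D]$ to be a clique of odd order, forces $A$ to be a clique completely joined to $D\cup C$, and (after checking that a nonempty $C$ can be absorbed into the clique part without decreasing $\rho$) lets me take $C=\emptyset$; the Berge--Tutte identity then reads $|A|+\sum_i\frac{b_i-1}{2}=k$, where the $b_i$ are the orders of the odd cliques in $D$. The crucial simplification is a merging step: replacing two odd cliques $K_{b_i}\cup K_{b_j}$ by $K_{b_i+b_j-1}\cup K_1$ preserves the matching number but strictly adds edges, hence by Perron--Frobenius does not decrease $\rho$. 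Iterating collapses all nontrivial cliques into one, so the extremal graph lies in the one-parameter family
\[
G(a)=K_a\vee\bigl(K_{2k-2a+1}\cup\overline{K}_{n-2k-1+a}\bigr),\qquad 0\le a\le k,
\]
with $G(0)=K_{2k+1}\cup\overline{K}_{n-2k-1}$ and $G(k)=K_k\vee\overline{K}_{n-k}$.

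It then remains to maximise $\rho(G(a))$ over $a\in\{0,1,\dots,k\}$. Each $G(a)$ admits an equitable three-part partition (apex / big clique / independent set), so $\rho(G(a))$ is the Perron root of the quotient matrix
\[
Q(a)=\begin{pmatrix} a-1 & 2k-2a+1 & n-2k-1+a\\ a & 2k-2a & 0\\ a & 0 & 0\end{pmatrix}.
\]
The heart of the argument is to show that this Perron root attains its maximum over $0\le a\le k$ only at the endpoints $a=0$ or $a=k$; I would establish this by analysing the characteristic polynomial of $Q(a)$ as $a$ varies, equivalently by a Perron-eigenvector perturbation showing there is no interior maximum. Finally I would compare the two endpoints: $\rho(G(0))=2k$, while $\rho(G(k))$ is the larger root of $\lambda^{2}-(k-1)\lambda-k(n-k)=0$. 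Setting these equal yields exactly $n=3k+2$, so $\rho(G(0))>\rho(G(k))$ for $2k+2\le n<3k+2$, the two coincide at $n=3k+2$, and $\rho(G(k))>\rho(G(0))$ for $n>3k+2$. This produces parts (2)--(4) precisely.

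The hard part will be the structural reduction together with the endpoint-domination claim. The Gallai--Edmonds skeleton and the merging step are clean, but verifying rigorously that $C$ may be taken empty and, above all, that no \emph{mixed} configuration (a small apex sitting on a moderately large clique) beats both endpoints is where the real work lies, since $\rho(G(a))$ for intermediate $a$ can exceed one endpoint even while staying below the maximum of the two. I expect that monotonicity/convexity analysis of the Perron root of $Q(a)$, rather than the final quadratic comparison, to carry the weight of the proof; once it is in place, the threshold $n=3k+2$ drops out of the elementary computation above.
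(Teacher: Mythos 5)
This statement is quoted in the paper from Feng, Yu and Zhang \cite{2007} without proof, so the only meaningful comparison is with their published argument, and your outline essentially reproduces it: pass to edge-maximal graphs of bounded matching number, use the classical characterization of saturated graphs as $K_s\vee(\bigcup_i K_{b_i})$ with all $b_i$ odd and $s+\sum_i(b_i-1)/2=k$, reduce to the one-parameter family $G(a)$, and compare the endpoints $\rho(G(0))=2k$ and $\rho(G(k))$, the larger root of $\lambda^2-(k-1)\lambda-k(n-k)=0$; substituting $\lambda=2k$ gives $k(3k+2-n)$, so the crossover at $n=3k+2$ is exactly right, as is your quotient matrix $Q(a)$. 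You were also right to normalize the problem as $\nu(G)\le k$: read literally, ``$kP_2$-free'' means $\nu\le k-1$, and the listed extremal graph $K_{2k+1}\cup\overline{K}_{n-2k-1}$ actually contains $kP_2$; the convention that makes the statement true (and matches the original theorem, with $\beta=k$) is the one you adopted. Part (1) as you argue it is complete and correct.

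There are, however, two genuine gaps. First, your justification of the merging step is a non sequitur: $K_{b_i+b_j-1}\cup K_1$ does \emph{not} contain $K_{b_i}\cup K_{b_j}$ as a subgraph (two copies of $K_3$ do not pack into $K_5\cup K_1$), so Perron--Frobenius monotonicity does not apply, and a raw edge count proves nothing about $\rho$ --- for instance $K_3\cup K_3$ has six edges and $\rho=2$, while $K_{1,5}$ has only five edges and $\rho=\sqrt5>2$. The merging inequality is true, but it requires an actual spectral comparison (a characteristic-polynomial or equitable-quotient computation for $K_s\vee(K_b\cup K_c\cup\cdots)$, or eigenvector surgery with the Perron vector), and that comparison must be strict if you want the exact extremal \emph{sets} claimed in (2)--(4), including the two-element set at $n=3k+2$. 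Second, and more seriously, the step you yourself call the heart --- that the Perron root of $Q(a)$ over $0\le a\le k$ is maximized only at $a\in\{0,k\}$ --- is announced (``I would establish'', ``I expect'') but never carried out; note that $\det(\lambda I-Q(a))$ is cubic in $a$ after substituting $b=2k-2a+1$ and $m=n-2k-1+a$, so the interior-domination claim is not a formality and is precisely where the published proof spends its effort. As submitted, parts (2)--(4) therefore rest on an unproved claim. A smaller point: your parenthetical absorption of the Gallai--Edmonds set $C$ should be replaced by the standard saturated-graph characterization, which gives $C=\emptyset$ combinatorially and spares you a spectral detour.
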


If $\ell =3$, then $kS_{\ell-1}\cup P_{\ell}=(k+1)P_3$ holds. 
In 2019, Chen, Liu and Zhang \cite{2019} gave the spectral extremal graph for $kP_3$. Set $F_{n,\,k}=K_{k-1}\vee (dK_2\cup K_s)$, where $n-(k-1)=2d+s$ and $0\leq s<2$.
\begin{theorem}[{Chen, Liu and Zhang~\cite[]{2019}}]
	Suppose $k\geq 2$ and $n\geq 8k^2-3k$. Then 
	\begin{equation*}
		{\rm{Ex}}_{sp}(n,\,kP_{3})= \{F_{n,\,k}\}.
	\end{equation*}
\end{theorem}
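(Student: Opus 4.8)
The plan is to prove that the extremal graph $G$ (a $kP_3$-free graph of order $n$ with $\rho(G)$ maximum) must equal $F_{n,k}=K_{k-1}\vee(dK_2\cup K_s)$. I first record that $F_{n,k}$ is an admissible competitor: every component of $dK_2\cup K_s$ has at most two vertices, so any copy of $P_3$ meets the clique $K_{k-1}$; hence no $k$ of them can be disjoint and $F_{n,k}$ is $kP_3$-free. Since $F_{n,k}\supseteq K_{k-1}\vee\overline{K}_{n-k+1}$, the $2\times 2$ quotient matrix of the latter gives $\rho(F_{n,k})\ge\rho(K_{k-1}\vee\overline{K}_{n-k+1})>\sqrt{(k-1)(n-k+1)}$, whence by maximality $\rho:=\rho(G)>\sqrt{(k-1)(n-k+1)}$. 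Let $x$ be the Perron vector with $\max_v x_v=x_{u^*}=1$; a routine argument lets me assume $G$ connected.

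The first step is a greedy packing lemma. If $G$ contained $k$ vertices each of degree at least $3k-1$, I could use them as the centres of $k$ disjoint copies of $P_3$: processing the centres one at a time, fewer than $3k-1$ vertices are ever forbidden (the other $k-1$ centres and the at most $2(k-1)$ previously used leaves), so each centre still has two private neighbours. Thus a $kP_3$-free graph has at most $k-1$ vertices of degree $\ge 3k-1$. Define the core $A:=\{v:|N(v)|\ge 3k-1\}$, so $a:=|A|\le k-1$, and set $H:=G-A$. Every vertex of $H$ has degree at most $3k-2$, so $\rho(H)\le 3k-2$; since $3k-2<\sqrt{(k-1)(n-k+1)}$ for $n\ge 8k^2-3k$, the case $a=0$ (where $G=H$) is impossible, and hence $1\le a\le k-1$.

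Next I pin down the join structure. Because each vertex of $A$ has degree at least $3k-1$, any $k-a$ disjoint copies of $P_3$ lying inside $H$ could be extended, by assigning two fresh neighbours to each of the $a$ vertices of $A$ (again fewer than $3k-1$ vertices are forbidden at each step), to $k$ disjoint copies in $G$; therefore $H$ is $(k-a)P_3$-free. Now let $G^{*}:=K_a\vee H$ be the graph obtained from $G$ by completing $A$ into a clique and joining it to all of $H$. Any copy of $kP_3$ in $G^{*}$ would leave at least $k-a$ of its paths inside $G^{*}-A=H$, contradicting the $(k-a)P_3$-freeness of $H$; so $G^{*}$ is $kP_3$-free. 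As $G\subseteq G^{*}$, maximality gives $\rho(G)\ge\rho(G^{*})\ge\rho(G)$, and since $G^{*}$ is connected, strict monotonicity of $\rho$ under edge addition forces $G=G^{*}=K_a\vee H$.

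The remaining task, which I expect to be the main obstacle, is to optimise over $a$ and $H$. One must show that $\max_H\rho(K_a\vee H)$, taken over $(k-a)P_3$-free graphs $H$ on $n-a$ vertices, strictly increases in $a$ until $a=k-1$, so the optimum uses the full core; then $H$ is $P_3$-free, i.e.\ a union of edges and isolated vertices, and a maximum matching $dK_2\cup K_s$ wins because adjoining any further edge strictly raises $\rho$ of the connected join. A clean way to organise this is induction on $k$: an extremal $(k-a)P_3$-free $H$ is itself $K_{k-a-1}\vee(\text{matching})$, so the join telescopes to $K_{k-1}\vee(\text{matching})=F_{n,k}$, with base case the $P_3$-free (pure matching) one. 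The quantitative heart of the matter is estimating the Perron weight carried by $H$ sharply enough to rule out both $a<k-1$ and any non-matching $H$, and it is exactly these estimates that consume the hypothesis $n\ge 8k^2-3k$.
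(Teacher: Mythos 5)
You should first note that the paper does not prove this statement at all: it is quoted from Chen--Liu--Zhang \cite{2019}, so your attempt can only be measured against the cited source and against the paper's analogous machinery (Sections 4--5, where the same kind of theorem is proved for $kS_{\ell-1}\cup P_{\ell}$). Your structural first half is correct and cleanly executed: the greedy packing argument showing that a $kP_3$-free graph has at most $k-1$ vertices of degree at least $3k-1$ is sound (the forbidden set has size at most $3(k-1)<3k-1$ at every step); the bound $\rho(G)>\sqrt{(k-1)(n-k+1)}$ together with $\rho(G-A)\leq 3k-2$ correctly forces $1\leq a\leq k-1$ (minor quibble: for $k=2$ the strict inequality $\rho(K_1\vee\overline{K}_{n-1})>\sqrt{n-1}$ is in fact an equality, though $\rho(F_{n,2})>\sqrt{n-1}$ still holds); the deduction that $H=G-A$ is $(k-a)P_3$-free and the completion argument $G=K_a\vee H$ via maximality and strict monotonicity of $\rho$ under edge addition in a connected graph are both valid.

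The genuine gap is the step you yourself flag as ``the main obstacle'': you never prove that $a=k-1$, nor that $H$ must be a maximum matching, and this is precisely where all of the analytic content of the theorem lives. The induction you sketch is not valid as stated, because the objective changes under the join: the inductive hypothesis identifies the maximizer of $\rho(H)$ over $(k-a)P_3$-free graphs $H$ of order $n-a$, whereas what you need is the maximizer of $\rho(K_a\vee H)$ over the same family. These are different optimization problems --- a graph $H$ with smaller spectral radius but Perron weight distributed more favorably with respect to the dominating set $K_a$ could a priori beat $F_{n-a,\,k-a}$ inside the join, and nothing in your argument excludes this; so the ``telescoping'' $K_a\vee K_{k-a-1}\vee(\text{matching})=F_{n,k}$ rests on an unproved exchange of optimizers. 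Ruling out $a<k-1$ likewise requires a quantitative comparison $\rho(K_a\vee H)<\rho(F_{n,k})$ for every admissible $H$, and it is exactly here that the hypothesis $n\geq 8k^2-3k$ must be spent. Compare with how this paper handles the corresponding step for $kS_{\ell-1}\cup P_{\ell}$: Lemmas 4.1--4.6 develop explicit Perron-vector estimates (degree lower bounds for vertices of large eigenvector entry, the counting that forces $\lvert R''\rvert=h$ exactly) before the join structure can be nailed down in Section 5. Your reduction to $G=K_a\vee H$ is a correct and somewhat more elementary route to the same staging point, but without the eigenvector-weight estimates the argument is a reduction, not a proof.
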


For $\ell\geq 4$, we give the spectral extremal graphs for $kS_{\ell -1}\cup P_{\ell}$.
	\begin{theorem}\label{2}
		Suppose $k\geq 1$, $\ell \geq 4$ and $n\geq 8\left(k+\lfloor\frac{\ell}{2}\rfloor\right)^3t^8$ where  $t=(\ell ^2-\ell +1)k+\frac{\ell ^2+3\ell -2}{2}$. 
		\begin{enumerate}[(1)]
			\item If $\ell $ is even, then ${\rm{Ex}}_{sp}(n,\,kS_{\ell -1}\cup P_{\ell})=S_{n,\,\frac{2k+\ell-2 }{2}}$ holds.
			\item If $\ell $ is odd, then ${\rm{Ex}}_{sp}(n,\,kS_{\ell -1}\cup P_{\ell})=S^{+}_{n,\,\frac{2k+\ell-3 }{2}}$ holds.
		\end{enumerate}
	\end{theorem}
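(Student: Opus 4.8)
The plan is to set $p=k+\lfloor \ell/2\rfloor-1$, so that the two target graphs are $S_{n,p}$ (when $\ell$ is even) and $S^{+}_{n,p}$ (when $\ell$ is odd); both have a dominating clique of order $p$. Note first that the Tur\'an extremal graph for $kS_{\ell-1}\cup P_\ell$ is $K_k\vee(dK_{\ell-1}\cup K_r)$ rather than $S_{n,p}$, so Theorem~\ref{1} does \emph{not} apply and a self-contained spectral argument is required. I would begin by recording the two facts I will compare against. For $F$-freeness I would use a \emph{clique-budget} count: in $S_{n,p}$ every edge meets the clique $K_p$, so a copy of $P_\ell$ must spend at least $\lfloor \ell/2\rfloor$ of its vertices inside $K_p$ (its independent-set vertices form an independent set along the path, hence number at most $\lceil \ell/2\rceil$), and each star $S_{\ell-1}$ must spend at least one vertex there (its center, or else all $\ell-1$ leaves). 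A copy of $F$ would therefore consume at least $\lfloor\ell/2\rfloor + k = p+1$ clique vertices, which is impossible. The single extra edge of $S^{+}_{n,p}$ lets one pair of independent-set vertices be consecutive on the path, but when $\ell$ is odd this still forces $\lfloor\ell/2\rfloor$ clique vertices, whereas when $\ell$ is even it would save one --- this parity gap is exactly why $S^{+}_{n,p}$ is admissible only in the odd case. Finally I would record $\rho(S_{n,p})>\sqrt{p(n-p)}$ from the $2\times2$ quotient matrix on the clique/independent-set partition.

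Now let $G\in{\rm Ex}_{sp}(n,\,kS_{\ell-1}\cup P_\ell)$ with Perron vector $\mathbf{x}$ normalized so that $\max_v x_v=x_z=1$. Extremality gives $\rho:=\rho(G)\ge\rho(S_{n,p})>\sqrt{p(n-p)}$, while $F$-freeness together with (\ref{upbound}) gives $e(G)\le(p+\tfrac12)(n-1)$. These force $d(z)\ge\rho\ge\sqrt{p(n-p)}$, and through the walk identity $\rho^2=\sum_v x_v\,|N(z)\cap N(v)|\le\sum_{w\in N(z)}d(w)$ they force $N(z)$ to carry most of the degree mass of $G$. Using $\rho x_v=\sum_{w\sim v}x_w\le d(v)$, hence $x_v\le d(v)/\rho$, and combining with the edge bound, I would show that only $O(\sqrt{n})$ vertices can have eigenvector entry bounded away from $0$, i.e. almost all of the Perron weight sits on a small set.

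The technical heart --- and the step I expect to be the main obstacle --- is upgrading these soft estimates into the exact clique-plus-(near-)independent structure: a set $W$ of \emph{exactly} $p$ vertices, each of degree $n-o(n)$, dominating $G$, with $V(G)\setminus W$ spanning only $o(n)$ edges. I would take $W$ to be the vertices of large entry/degree and bound $|W|$ from both sides by playing $\rho>\sqrt{p(n-p)}$ against the edge bound. The upper bound $|W|\le p$ is where $F$-freeness enters decisively: if $G$ had $p+1=k+\lfloor\ell/2\rfloor$ vertices of degree $n-o(n)$, one could use $k$ of them as centers of disjoint stars $S_{\ell-1}$ (their leaves drawn greedily from the huge common low-neighborhood) and the remaining $\lfloor\ell/2\rfloor$ as the high-degree vertices of an alternating path $P_\ell$, all vertex-disjoint, producing a copy of $F$. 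The lower bound $|W|\ge p$ comes instead from the spectral side, since fewer than $p$ dominating vertices would cap $\rho$ below $\sqrt{p(n-p)}$. The same disjoint-stars-and-path idea controls edges inside $V(G)\setminus W$: a long path or several disjoint stars living outside $W$ would, together with vertices of $W$, build $F$.

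Once $W$ is pinned down, I would finish by an exactness argument driven by extremality. Since $G$ maximizes $\rho$, a Perron-weight comparison shows that any edge whose addition keeps $G$ free of $F$ must already be present; in particular $W$ is internally complete and complete to $I:=V(G)\setminus W$. It then remains to count the edges inside $I$. Re-running the clique-budget count with $|W|=p$ available clique vertices shows that \emph{one} edge inside $I$ already lets $P_\ell$ economize a clique vertex when $\ell$ is even, creating $F$; hence $I$ is independent and $G=S_{n,p}$. When $\ell$ is odd a single edge inside $I$ keeps $G$ free of $F$ (the path still needs $\lfloor\ell/2\rfloor$ clique vertices), but a second edge lets the path economize and creates $F$; so extremality forces exactly one edge and $G=S^{+}_{n,p}$. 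This yields the two cases of the theorem, the threshold on $n$ being whatever is needed to make the $O(\sqrt n)$ and $o(n)$ error terms in the structural step negligible against the gap $\rho(S^{+}_{n,p})-\rho(S_{n,p})$.
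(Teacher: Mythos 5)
Your proposal is correct and follows essentially the same route as the paper: concentration of the Perron weight onto a set of exactly $h=k+\lfloor\ell/2\rfloor-1$ vertices of degree $n-o(n)$ (the paper's $R''$ in Lemma \ref{3-6}, whose upper bound uses your identical embedding of $k$ stars centered at high-degree vertices plus an alternating path through the remaining $\lfloor\ell/2\rfloor$ of them, and whose lower bound is the spectral comparison against $\sqrt{hn}$), followed by domination, edge-maximality, and your parity count of edges inside the independent part (the paper's Claims A--D), yielding $S_{n,h}$ for even $\ell$ and $S^{+}_{n,h}$ for odd $\ell$. The one point the paper treats more delicately than your sketch is the order of the endgame: it pins down $G[\overline{R''}]$ first (including an edge-rotation argument for the $2P_2$ configuration in the odd case) and only then completes the join, since adding clique/join edges is guaranteed to preserve $kS_{\ell-1}\cup P_{\ell}$-freeness only once the outside part is known to span at most one edge.
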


%In 2010, Nikiforov gave the value of $\rho( S_{n,\,p})$ and $S^{+}_{n,\,p}$. When $p>1$, $n>p$, 
%\begin{equation}\label{s}
%	\rho (S_{n,\,p})=\frac{p-1+\sqrt{4pn-(3p^2+2p-1)}}{2}.
%\end{equation}
%\begin{equation}\label{s+}
%	\rho(S^{+}_{n,\,p})=\frac{p-1+\sqrt{4pn-(3p^2+2p-1)}}{2}+\frac{1}{n}+O(n^{-\frac{3}{2}}).
%\end{equation}

	\section{Proof of Theorem \ref{1}}
%	Denote $G$ be an extremal graph with maximum spectral radius over all graphs of order $n$ which contains no $k_1S_{2\ell-1 }\cup k_2P_{2\ell}$, $\rho $ be the spectral radius of $G$. Set $h= k_1+\ell k_2-1$, $h>1$.
We would like to point out that the proof of Theorem \ref{1} is inspired by \cite{2019}, and  the following two lemmas are very useful to prove Theorem \ref{1}.
\begin{theorem}[{Hong, Shu and Fang~\cite[]{2001},  Nikiforov~\cite[]{2002}}]\label{rho upper}
	Let $G$ be a graph of order $n$ with the minimum degree $\delta=\delta (G)$ and $e=e(G)$. Then
	\begin{equation*}
		\rho(G)\leq \frac{\delta -1+\sqrt{8e-4\delta n+(\delta +1)^2}}{2}.
	\end{equation*}
\end{theorem}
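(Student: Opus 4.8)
Since Theorem~\ref{rho upper} is a classical spectral bound, the plan is to reconstruct its proof from the Perron--Frobenius eigenvector together with the minimum-degree hypothesis, reducing everything to a single quadratic inequality in $\rho$. First I would let $x=(x_1,\dots,x_n)^{T}$ be a nonnegative eigenvector for $\rho=\rho(G)$ and normalize it by its largest entry, say $x_u=\max_i x_i=1$; since $A(G)$ is nonnegative and symmetric such an $x$ exists, and the eigen-equation $\rho x_i=\sum_{j\sim i}x_j$ holds at every vertex. Applying this equation twice at the dominant vertex $u$ gives
\begin{equation*}
\rho^{2}=\rho^{2}x_u=\sum_{j\sim u}\rho x_j=\sum_{j\sim u}\sum_{k\sim j}x_k=d_u+\sum_{j\sim u}\sum_{k\sim j,\,k\ne u}x_k,
\end{equation*}
where the term $d_u$ records the length-two walks that return to $u$, and the remaining double sum counts the length-two walks that leave $u$.

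The heart of the argument is to turn this double sum into an expression controlled by $e$, $n$, $\delta$ and $\rho$. Using $x_k\le 1$ together with the partition $V(G)=\{u\}\cup N(u)\cup W$, where $W=V(G)\setminus N[u]$ has $n-1-d_u$ vertices, I would bound the contribution of each class of walks and, crucially, invoke $\delta(G)=\delta$ on $W$ (each $w\in W$ contributes at least $\delta$ to $\sum_{w\in W}d_w$) to account for the edges not incident with $N[u]$. Collecting terms should yield the quadratic inequality $\rho^{2}-(\delta-1)\rho\le 2e-\delta(n-1)$. The conclusion then reads off directly: the estimate $\rho^{2}-(\delta-1)\rho-\bigl(2e-\delta(n-1)\bigr)\le 0$ forces $\rho$ below the larger root of the associated equation, namely $\tfrac12\bigl(\delta-1+\sqrt{(\delta+1)^{2}+8e-4\delta n}\bigr)$, because $(\delta+1)^{2}+8e-4\delta n=(\delta-1)^{2}+4\bigl(2e-\delta(n-1)\bigr)$, which is exactly the stated bound.

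The step I expect to be delicate is producing the sharp, self-referential linear term $(\delta-1)\rho$ rather than the cruder $(\delta-1)d_u$ that the naive walk count gives: simply bounding every $x_k$ by $1$ and pushing the minimum degree onto $W$ leads only to $\rho^{2}\le 2e-\delta(n-1)+(\delta-1)d_u$, and since $d_u\ge\rho$ this is strictly weaker and does not close into a clean bound in $e,n,\delta$ alone. Recovering $\rho$ in place of $d_u$ is precisely the refinement of Hong's bound $\rho\le\sqrt{2e-n+1}$ that the minimum degree buys, and it is where one must combine the careful charging of the edges inside $N(u)$ with the eigen-relation $\sum_{j\sim u}x_j=\rho$ rather than apply them separately; this is the crux of the whole estimate.
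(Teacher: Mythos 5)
The paper does not prove this theorem at all --- it quotes it from Hong--Shu--Fang and Nikiforov --- so your attempt has to stand on its own, and as written it does not close. Your setup (Perron vector normalized so $x_u=\max_i x_i=1$, the two-step walk identity $\rho^2=d_u+\sum_{j\sim u}\sum_{k\sim j,\,k\ne u}x_k$, and the reduction of the stated bound to the quadratic inequality $\rho^{2}-(\delta-1)\rho\le 2e-\delta(n-1)$, whose discriminant algebra you verify correctly) is the right frame. But the one inequality that carries all the content is only asserted: ``collecting terms should yield'' it, and your own final paragraph concedes that the collection you actually describe --- bounding $x_k\le 1$ on the partition $\{u\}\cup N(u)\cup W$ and charging $\delta$ to each $w\in W$ --- produces only $\rho^{2}\le 2e-\delta(n-1)+(\delta-1)d_u$, which is strictly weaker because $\rho x_u=\sum_{j\sim u}x_j\le d_u x_u$ gives $\rho\le d_u$. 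Acknowledging that the decisive refinement is ``the crux of the whole estimate'' without supplying it leaves the proof genuinely incomplete.

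Moreover, the repair you hint at is misdirected: the sharpening does not come from a more careful charging of the edges inside $N(u)$ combined with $\sum_{j\sim u}x_j=\rho$. If you carry out exactly that combination you still inherit a slack term of order $\delta\lvert W\rvert=\delta(n-1-d_u)$ (equivalently, a right-hand side $2e-\delta d_u$ instead of $2e-\delta(n-1)$), so the bound closes only when $d_u=n-1$. The mechanism in Nikiforov's proof is different in kind: write
\begin{equation*}
\rho^{2}-\delta\rho=\sum_{j\sim u}\Bigl(\sum_{k\sim j}x_k-\delta x_j\Bigr),
\end{equation*}
note that by the eigen-equation \emph{every} vertex $j$ contributes $(\rho-\delta)x_j\ge 0$ (here $\rho\ge\delta$ since $\rho$ dominates the minimum row sum), and therefore the sum over $N(u)$ is at most the sum over all of $V(G)$ \emph{minus the term at $u$ itself}, which equals $\rho-\delta$. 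The global sum is $\sum_{k}(d_k-\delta)x_k\le 2e-\delta n$ by $0\le x_k\le 1$ and $d_k\ge\delta$, so $\rho^{2}-\delta\rho\le 2e-\delta n-(\rho-\delta)$, which is precisely $\rho^{2}-(\delta-1)\rho\le 2e-\delta(n-1)$. The ``$-1$'' in $\delta-1$ and the ``$+\delta$'' in the constant are bought entirely by extending the sum over the whole vertex set and then deleting the nonnegative diagonal term at $u$, not by any bookkeeping of $N(u)$-internal edges; this is the step your proposal is missing.
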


\begin{theorem}[{Nikiforov~\cite[]{2010}}]\label{subgraph}
Suppose $c\geq 0$, $p\geq 2$, $n\geq 2^{4p}$. If $\delta (G)<p$ and 
	\begin{equation*}
		\rho (G)\geq \frac{p-1+\sqrt{4pn-4p^2+c}}{2},
	\end{equation*}
	then there exists a subgraph $H$ of order $q\geq \lfloor \sqrt{n}\rfloor $ satisfying one of the following conditions:
	\begin{enumerate}[(1)]
		\item $q=\lfloor \sqrt{n}\rfloor $ and $\rho (H)>\sqrt{(2p+1)q}$;
		\item $q>\lfloor \sqrt{n}\rfloor$, $\delta(H)\geq p$ and $\rho (H)>\frac{p-1+\sqrt{4pq-4p^2+c+2}}{2}$.
	\end{enumerate}
\end{theorem}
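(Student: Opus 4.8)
The plan is to establish the statement by a \emph{peeling} (degree-cleaning) process. Starting from $G=G_0$, as long as the current graph $G_i$ satisfies both $\delta(G_i)<p$ and $|V(G_i)|>\lfloor\sqrt n\rfloor$, I delete a vertex $u_{i+1}$ of minimum degree to form $G_{i+1}$. The process halts at the first graph $H=G_t$ that either has $\delta(H)\ge p$ (and then necessarily $|V(H)|>\lfloor\sqrt n\rfloor$, the candidate for conclusion (2)) or has exactly $\lfloor\sqrt n\rfloor$ vertices (the candidate for conclusion (1)). Since $\delta(G)<p$ by hypothesis, at least one vertex is removed, so $t\ge 1$.

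The engine of the proof is a per-deletion estimate for the spectral radius via the Perron eigenvector. Let $F$ be the current graph with unit Perron vector $\mathbf{x}$ and radius $\rho(F)$, and let $u$ be the deleted vertex with $d_F(u)\le p-1$. From $\rho(F)x_u=\sum_{v\sim u}x_v\le\sqrt{d_F(u)}$ one gets $x_u^2\le (p-1)/\rho(F)^2$, and using the restriction of $\mathbf{x}$ to $V(F)\setminus\{u\}$ as a test vector in the Rayleigh quotient of $F-u$ yields
\begin{equation*}
	\rho(F-u)\ \ge\ \rho(F)\,\frac{1-2x_u^2}{1-x_u^2}.
\end{equation*}
Because $n\ge 2^{4p}$ forces $\rho(F)$ to be of order $\sqrt{pn}$, the weight $x_u^2$ is minuscule and each deletion lowers $\rho$ only slightly. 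I would record the loss through the normalized quantity $g(\rho)=\rho^2-(p-1)\rho$, for which the hypothesis $\rho(G)\ge \frac{p-1+\sqrt{4pn-4p^2+c}}{2}$ is exactly $g(\rho(G))\ge pn-p^2+\tfrac{c-(p-1)^2}{4}$, and for which the estimate above bounds the drop $g(\rho(F))-g(\rho(F-u))$ by a quantity controlled by the Perron weight $\sum_{v\in N(u)}x_v^2$ sitting on the neighborhood of $u$.

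The key bookkeeping step is then to propagate along the process the invariant $\rho(G_i)\ge \frac{p-1+\sqrt{4p(n-i)-4p^2+c+2i}}{2}$: one vertex is removed while the admissible constant is raised by $2$, and a direct comparison of thresholds shows that maintaining this invariant requires the per-step drop of $g$ to be at most $p-\tfrac12$. The two conclusions are then read off from where the process stops. If it stops with $\delta(H)\ge p$ after $t\ge1$ deletions, the invariant at step $t$ gives $\rho(H)\ge \frac{p-1+\sqrt{4pq-4p^2+c+2t}}{2}>\frac{p-1+\sqrt{4pq-4p^2+c+2}}{2}$ with $q=|V(H)|$, which is conclusion (2). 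If instead the process runs down to $q=\lfloor\sqrt n\rfloor$, then $t=n-q$ is large, the accumulated constant $c+2t$ is of order $2n$, and the invariant forces $\rho(H)^2\gtrsim \tfrac{n}{2}$, which for $n\ge 2^{4p}$ comfortably exceeds $(2p+1)\lfloor\sqrt n\rfloor$, giving conclusion (1).

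The main obstacle is precisely the constant in the per-deletion estimate. The crude bound $x_u^2\le (p-1)/\rho(F)^2$ only yields a drop of order $2(p-1)$, which is too large to sustain the $p-\tfrac12$ budget once $p\ge 3$. To close this gap I would show that the Perron weight $\sum_{v\in N(u)}x_v^2$ carried by the neighborhood of a low-degree vertex is small: intuitively, a graph with $\rho$ of order $\sqrt{pn}$ cannot concentrate its Perron mass on the at most $p-1$ neighbors of $u$, since such concentration on so few vertices would force $\rho=O(p)$. Quantifying this concentration bound, and thereby sharpening the per-step loss to at most $p-\tfrac12$, is where the hypothesis $n\ge 2^{4p}$ is essential and is the delicate heart of the argument; the upper bound of Theorem~\ref{rho upper} is convenient here to certify that $\rho$ remains large throughout the peeling.
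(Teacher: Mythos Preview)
The paper does not prove this statement: Theorem~\ref{subgraph} is quoted verbatim from Nikiforov~\cite{2010} and used as a black box in the proof of Theorem~\ref{1}. There is therefore no ``paper's own proof'' to compare against; the authors simply invoke the result.

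As for your sketch on its own merits, the peeling strategy is the correct framework and is indeed how Nikiforov proceeds in the original source. Your identification of the main obstacle is also accurate: the naive Rayleigh-quotient bound after deleting a vertex $u$ of degree $\le p-1$ only gives a drop in $g(\rho)=\rho^2-(p-1)\rho$ of order roughly $2(p-1)$, whereas the invariant you want to maintain tolerates only a drop of $p-\tfrac12$ per step. You correctly diagnose that closing this gap requires controlling the Perron mass $\sum_{v\in N(u)}x_v^2$ on the neighborhood of the deleted vertex, and that this is where the hypothesis $n\ge 2^{4p}$ does real work. However, your proposal stops short of actually carrying out this sharpening, and the heuristic you offer (``concentration on $p-1$ vertices would force $\rho=O(p)$'') is too vague to be a proof: one needs a quantitative decay estimate on the eigenvector coordinates at low-degree vertices, propagated inductively through the peeling. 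Without that, the invariant is not established and both conclusions remain unproved. So the overall architecture is right, but the decisive technical lemma is missing rather than merely deferred.
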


\begin{proof of 1.7}
In \cite{2010}, it is pointed out that 
	\begin{equation}\label{S-jie}
		\rho (S_{n,\,p})= \frac{p-1+\sqrt{4pn-4p^2+(p-1)^2}}{2}.
	\end{equation}
	Suppose $G\in {\rm{Ex}}_{sp}(n,\,F)$. Since $S_{n,\,p}$ is $F$-free, we have 
	\begin{equation}\label{1-1}
		\rho (G)\geq \rho (S_{n,\,p})=\frac{p-1+\sqrt{4pn-4p^2+(p-1)^2}}{2}.
	\end{equation}

	{\textbf{Case 1.}} $\delta (G)\geq p$. 
	
	It is easy to see the function $f(x)=\frac{x-1+\sqrt{8e-4xn+(x+1)^2}}{2}$ is decreasing with respect to $x$. Then by Theorem \ref{rho upper} we have
	\begin{equation*}
		\rho (G)\leq \frac{\delta (G)-1+\sqrt{8e(G)-4\delta (G)n+(\delta (G)+1)^2}}{2}\leq \frac{p-1+\sqrt{8e(G)-4pn+(p+1)^2}}{2}.
	\end{equation*}
	Together with (\ref{1-1}), we have $e(G)\geq pn-\frac{p^2+p}{2}=e(S_{n,\,p})$. Then $G=S_{n,\,p}$ holds.
	
	{\textbf{Case 2.}}	$\delta (G)< p$. 
	
	If we take $c=(p-1)^2$ in Theorem \ref{subgraph}, then there is a graph $H\subseteq G$ of order $q\geq \lfloor \sqrt{n}\rfloor$ satisfying one of the following conditions:
	\begin{enumerate}[(1)]
		\item $q=\lfloor \sqrt{n}\rfloor$ and $\rho (H)>\sqrt{(2p+1)q}$;
		\item $q>\lfloor \sqrt{n}\rfloor$, $\delta(H)\geq p$ and $\rho (H)>\frac{p-1+\sqrt{4pq-4p^2+c+2}}{2}>\frac{p-1+\sqrt{4pq-4p^2+(p-1)^2}}{2}$.
	\end{enumerate}

	When $q=\lfloor \sqrt{n}\rfloor$ and $\rho (H)>\sqrt{(2p+1)q}$, then
	\begin{equation*}
		2e(H)=tr\left(A^2(H)\right)\geq \rho ^2(H)>(2p+1)q>2e(S_{q,\,p}).
	\end{equation*}
Noting that $q=\lfloor \sqrt{n}\rfloor \geq m$ and {\rm{Ex}}$(q,\,F)=\{S_{q,\,p}\}$ hold, then $H$ contains a copy of $F$ and then $G$ contains a copy of $F$. This is a contradiction.

	When $q>\lfloor \sqrt{n}\rfloor$, $\delta(H)\geq p$ and $\rho (H)>\frac{p-1+\sqrt{4pq-4p^2+(p-1)^2}}{2}$, applying Theorem \ref{rho upper}, we have
	\begin{equation*}
		\rho (H)\leq \frac{p-1+\sqrt{8e(H)-4pq+(p+1)^2}}{2}.
	\end{equation*}
Combining with $\rho (H)>\frac{p-1+\sqrt{4pq-4p^2+(p-1)^2}}{2}$, we have $e(H)>qp-\frac{p^2+p}{2}=e(S_{q,\,p})$. Hence $H$ contains a copy of $F$ and then $G$ contains a copy of $F$. That is a contradiction.

The proof of Theorem \ref{1}  (2) is similar to that of (1).
\end{proof of 1.7}

	\section{Some auxiliary results}
	 To prove Theorem \ref{2}, we need a rough bound on the size of $kS_{\ell -1}\cup P_{\ell}$-free bipartite graphs.
	\begin{lemma}\label{u1}
		Let $\ell \geq 4$ and $n$ be positive integers. If $G$ is a $P_{\ell}$-free bipartite graph of order $n$, then 
		\begin{equation*}
			e(G)\leq \left(\bigg{\lfloor}\frac{\ell }{2} \bigg{\rfloor}-1\right)n.
		\end{equation*}
	\end{lemma}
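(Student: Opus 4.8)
The plan is to prove the bound by induction on $n$, writing $d=\lfloor \ell/2\rfloor$ so that the claimed inequality reads $e(G)\le (d-1)n$. For small $n$ (say $n\le 2d-1$) the bound is immediate, since $e(G)\le \binom{n}{2}\le (d-1)n$, which serves as the base case. For the inductive step I would split on the minimum degree. If $G$ has a vertex $v$ with $\deg_G(v)\le d-1$, then $G-v$ is again bipartite and $P_\ell$-free, so by the inductive hypothesis $e(G)=e(G-v)+\deg_G(v)\le (d-1)(n-1)+(d-1)=(d-1)n$, and we are done. Thus the whole difficulty is concentrated in the case $\delta(G)\ge d$.

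To handle $\delta(G)\ge d$ I would take a longest path $P=v_0v_1\cdots v_{q-1}$ in $G$, with $q$ vertices. Because $P$ is longest, every neighbour of each endpoint lies on $P$; and because $G$ is bipartite the neighbours of $v_0$ occupy only the vertices of $P$ in the opposite colour class, i.e.\ among $v_1,v_3,v_5,\dots$. Counting these and using $\deg_G(v_0)\ge d$ forces $q\ge 2d$, so $G$ contains $P_{2d}$. On the other hand $P_\ell$-freeness gives $q\le \ell-1$. When $\ell$ is even, $\ell-1=2d-1<2d$, so the two bounds on $q$ contradict each other; hence $\delta(G)\ge d$ is impossible, the minimum-degree reduction always applies, and the even case is complete.

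The main obstacle is the odd case $\ell=2d+1$, where the two bounds only give $q=2d$ and $P_{2d}=P_{\ell-1}$ is not yet forbidden. Here I would push the endpoint analysis further: since $v_0$ has exactly $d$ available neighbours on $P$ and degree at least $d$, it must be adjacent to all of $v_1,v_3,\dots,v_{2d-1}$; symmetrically $v_{2d-1}$ is adjacent to all even-indexed vertices. In particular $v_0v_{2d-1}\in E(G)$, so $V(P)$ carries the cycle $v_0v_1\cdots v_{2d-1}v_0$. If the component containing $P$ had any further vertex $w$, connectivity would give $w$ a neighbour $v_i$ on this cycle, and rerouting along the cycle from $v_i$ would produce a path on $2d+1=\ell$ vertices, a contradiction. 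Therefore this component is exactly $V(P)$: a bipartite graph on balanced colour classes of size $d$, hence with at most $d^2\le (d-1)\cdot 2d$ edges (using $d\ge 2$, valid since $\ell\ge 5$ in the odd case). Deleting this whole component and applying the inductive hypothesis to what remains then yields $e(G)\le (d-1)n$, closing the induction.
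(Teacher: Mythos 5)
Your proof is correct, and it takes a genuinely different route from the paper's. The paper also inducts on $n$, but it settles the connected case by invoking the Balister--Gy\H{o}ri bound for connected graphs with no long path (Theorem \ref{lu-jie}) and then sums the bound over components, using bipartiteness only in the base cases $n\le \ell$. You instead make the argument self-contained: writing $d=\lfloor \ell/2\rfloor$, the minimum-degree dichotomy (a vertex of degree at most $d-1$ gives the inductive step by deletion) reduces everything to $\delta(G)\ge d$, where your longest-path endpoint analysis uses bipartiteness intrinsically---the endpoint's neighbours can only be the odd-indexed vertices of the path, forcing a path on at least $2d$ vertices. For even $\ell$ this is already a forbidden $P_\ell$; for odd $\ell$ the saturated endpoint degrees produce a spanning cycle on the path's $2d$ vertices, the component containing it is exactly that vertex set, and deleting it wholesale closes the induction (your observation that $d\ge 2$, i.e.\ $\ell\ge 5$ in the odd case, is precisely what makes $d^2\le 2d(d-1)$ work). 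One phrasing slip, trivially repaired: connectivity yields \emph{some} vertex outside $V(P)$ with a neighbour on the cycle, not necessarily $w$ itself, and the resulting path on $2d+1=\ell$ vertices contradicts either $P_\ell$-freeness or the maximality of $P$. As for what each approach buys: the paper's is shorter since Theorem \ref{lu-jie} is already quoted there anyway, while yours avoids the external extremal theorem entirely and shows, as a by-product, that in the even case every $P_\ell$-free bipartite graph is $(d-1)$-degenerate.
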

	\begin{proof}
		When $n<\ell$, we have $e(G)\leq \lfloor \frac{n}{2}\rfloor\lceil\frac{n}{2}\rceil \leq \left(\lfloor\frac{\ell}{2}\rfloor-1\right)n.$ When $n\geq \ell$, we show the inequality by using induction on $n$. 
		When $n=\ell$, since $G$ is bipartite and $P_{\ell}$-free, we have $$e(G)\leq \bigg{\lfloor}\frac{n}{2}\bigg{\rfloor}\bigg{\lceil}\frac{n}{2}\bigg{\rceil} -1\leq \left(\bigg{\lfloor}\frac{\ell}{2}\bigg{\rfloor}-1\right)n.$$ Suppose that $n>\ell $ and the inequality holds for all $\ell \leq n'<n$. If $G$ is connected, then by Theorem \ref{lu-jie}, we have 
		$$e(G)\leq {\rm{max}}\left\{\binom{\ell -2}{2}+(n-\ell +2),\,\binom{\lceil\frac{\ell }{2}\rceil}{2}+\bigg{\lfloor}\frac{\ell -2}{2}\bigg{\rfloor} \left(n-\bigg{\lceil}\frac{\ell}{2}\bigg{\rceil}\right) \right\}\leq \left(\bigg{\lfloor}\frac{\ell}{2}\bigg{\rfloor}-1\right)n. $$
	 Now we suppose $G_1,\,G_2,\,\cdots,\,G_s$ are the connected components of $G$ with $\lvert V(G_i)\rvert =n_i$. If $n_i\geq \ell$, then by induction hypothesis  $e(G_i)\leq \left(\lfloor \frac{\ell}{2}\rfloor -1\right)n_i$ holds.
	 If $n_i<\ell$, then we have $e(G_i)\leq \lfloor \frac{n_i}{2}\rfloor \lceil\frac{n_i}{2}\rceil\leq \left(\lfloor\frac{\ell}{2} \rfloor-1 \right)n_i$.
Therefore, we obtain $$e(G)= e(G_1\cup \cdots \cup G_s)\leq \sum_{i=1}^{s}\left(\bigg\lfloor\frac{\ell}{2} \bigg\rfloor-1 \right)n_i\leq \left(\bigg\lfloor\frac{\ell}{2} \bigg\rfloor-1\right)n.$$ 
\end{proof}
	\begin{lemma}\label{u2}
		Let $G$ be a $kS_{\ell -1}\cup P_{\ell}$-free bipartite graph of order $n\geq \left(k+\lfloor \frac{\ell}{2}\rfloor -1\right)^2-(k+\lfloor \frac{\ell}{2}\rfloor -2)\ell+\ell ^2k+\ell ^2-\ell $ with $k\geq 0$ and $\ell \geq 4$. Then
		\begin{equation*}
			e(G)\leq \left(k+\bigg{\lfloor} \frac{\ell}{2}\bigg{\rfloor} -1\right)n.
		\end{equation*}
	\end{lemma}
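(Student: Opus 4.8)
The plan is to induct on $k$. The base case $k=0$ is exactly Lemma~\ref{u1} (as $0\cdot S_{\ell-1}\cup P_\ell=P_\ell$), giving $e(G)\le(\lfloor\ell/2\rfloor-1)n$. Write $c=k+\lfloor\ell/2\rfloor-1$ and let $g(k)$ denote the lower bound on $n$ in the statement. Assume the result for $k-1$. If $\Delta(G)\le\ell-1$, then $e(G)\le\frac{(\ell-1)n}{2}\le cn$ since $c\ge\lfloor\ell/2\rfloor\ge\frac{\ell-1}{2}$ for $k\ge1$, and we are done; so we may assume $\Delta(G)\ge\ell$ and fix a vertex $v$ of maximum degree.

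The core step is to peel off one star. Suppose we can find a copy of $S_{\ell-1}$ with centre $v$ and leaves $u_1,\dots,u_{\ell-1}\in N(v)$ satisfying $d(v)+\sum_{i=1}^{\ell-1}d(u_i)\le n+c\ell-1$, and set $H=G-\{v,u_1,\dots,u_{\ell-1}\}$. As $G$ is $kS_{\ell-1}\cup P_\ell$-free and this star is vertex-disjoint from $H$, the graph $H$ is $(k-1)S_{\ell-1}\cup P_\ell$-free: a copy of $(k-1)S_{\ell-1}\cup P_\ell$ inside $H$ together with the peeled star would yield the forbidden forest in $G$. A short computation gives $g(k)-g(k-1)=\ell^2-\ell+2c-1\ge\ell^2-2>\ell$, so $|V(H)|=n-\ell\ge g(k-1)$ and the induction hypothesis applies: $e(H)\le(c-1)(n-\ell)$. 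Since $u_1,\dots,u_{\ell-1}$ all lie in $N(v)$ and hence on one side of the bipartition, the only edges inside the deleted set are its $\ell-1$ spokes, so the number of edges of $G$ meeting $\{v,u_1,\dots,u_{\ell-1}\}$ is $d(v)+\sum_i d(u_i)-(\ell-1)\le n+(c-1)\ell$. Therefore $e(G)\le(c-1)(n-\ell)+n+(c-1)\ell=cn$, as wanted.

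Everything hinges on producing such a cheap star, and this is the main obstacle. Using only $d(v)\le n-1$, it is enough that $v$ have $\ell-1$ neighbours of degree at most $c$, because then $\sum_i d(u_i)\le(\ell-1)c\le c\ell$. The hard regime is when $v$ has fewer than $\ell-1$ such neighbours, i.e. when almost all of $N(v)$ has degree exceeding $c$. I expect to settle this by a dichotomy: either some eligible centre does admit $\ell-1$ cheap leaves (peel as above), or $G$ has so many large-degree, mutually well connected vertices that, using the hypothesis $n\ge g(k)$ (whose dominant term is $\approx k\ell^2$, tuned for exactly this), one greedily assembles $k$ vertex-disjoint copies of $S_{\ell-1}$ and then applies Lemma~\ref{u1} to the still edge-dense remainder to extract a disjoint $P_\ell$, forcing a forbidden $kS_{\ell-1}\cup P_\ell$. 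Making the greedy assembly precise, and keeping the leftover above the Lemma~\ref{u1} threshold after deleting $k$ stars, is where the technical difficulty lies; it also explains the constant, since removing $k$ stars costs about $kn$ edges while a $P_\ell$-free remainder contributes at most $(\lfloor\ell/2\rfloor-1)n$, summing to $cn$.
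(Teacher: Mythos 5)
Your inductive skeleton is fine (induction on $k$, base case Lemma~\ref{u1}, and your peeling arithmetic checks out: $(c-1)(n-\ell)+n+(c-1)\ell=cn$, and $g(k)-g(k-1)=\ell^2-\ell+2c-1>\ell$ so the order drop is affordable), but the proof stops exactly where the difficulty is: you never prove that a ``cheap star'' exists, and you say so yourself. The fallback dichotomy you sketch would not go through as stated, for three concrete reasons. First, a vertex of degree exceeding $c=k+\lfloor\ell/2\rfloor-1$ need not have degree at least $\ell-1$: for $k<\lceil\ell/2\rceil$ one has $c<\ell-1$, so your ``large-degree'' vertices cannot automatically serve as centres of $S_{\ell-1}$. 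Second, your closing accounting (``removing $k$ stars costs about $kn$ edges'') is unjustified: if you delete whole stars, the $\ell-1$ leaves of each may themselves have degree close to $n$, so the cost can be of order $k\ell n$, which destroys the bound $cn$. Third, to turn a $P_\ell$ in the remainder into a contradiction you must be able to rebuild the $k$ stars avoiding the path's $\ell$ vertices, which requires centres of degree $\Omega(n/\ell)$ --- far more than the ``$>c$'' your dichotomy provides. So the decisive case is a genuine gap, not a routine technicality.

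The paper closes this gap with a maximality device absent from your plan. It replaces $G$ by an edge-maximal $kS_{\ell-1}\cup P_\ell$-free bipartite graph $H$ of order $n$, so that $e(H)\geq h(n-h)$ (from $K_{h,n-h}$), which exceeds ${\rm ex}(n,kS_{\ell-1})$ by Theorem~\ref{xing-jie2}; hence $H$ contains $kS_{\ell-1}$. Comparing $e(H)\geq h(n-h)$ with the induction bound $e(H-S_{\ell-1})\leq(h-1)(n-\ell)$ shows that \emph{every} copy of $S_{\ell-1}$ in $H$ carries a vertex of degree at least $\frac{n-h^2+(h-1)\ell}{\ell}$ --- this is precisely the linear-in-$n$ degree guarantee your sketch lacks. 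The paper then collects $k$ such vertices into a set $U$ (one per star), and shows $H[\overline{U}]$ is $P_\ell$-free: otherwise the common neighbourhoods are large enough to rebuild $k$ disjoint stars centred in $U$ avoiding the path, where the hypothesis $n\geq h^2-(h-1)\ell+\ell^2k+\ell^2-\ell$ is tuned exactly so that $\lvert W_1\rvert\geq\frac{n-h^2+(h-1)\ell}{\ell}-(k-1)-\ell\geq(\ell-1)k$. Finally Lemma~\ref{u1} bounds $e(H[\overline{U}])$ and one adds $e(U,\overline{U})+e(H[U])\leq k(n-k)+k^2/4$. Note that only the $k$ \emph{centres} are ever deleted, never whole stars, which is what keeps the incident-edge cost near $kn$; some version of this manoeuvre (maximality to force high-degree centres, deleting centres only, re-finding leaves at the end) is what your argument needs to be completed.
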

	\begin{proof}
		We show the inequality by using induction on $k$. When $k=0$, the result holds by Lemma \ref{u1}. Suppose that $k\geq 1$ and the conclusion holds for all $k'<k$. Write $h=k+\lfloor\frac{\ell}{2}\rfloor -1$. Let $H$ be a  $kS_{\ell -1}\cup P_{\ell}$-free bipartite graph of order $n$ with maximum size. Since the complete bipartite graph $K_{h,\,n-h}$ is $kS_{\ell -1}\cup P_{\ell}$-free, we have 
		\begin{equation*}
			\begin{split}
		e(H)&\geq h(n-h)> \left(k+\frac{\ell }{2}-2\right)n-\frac{(k-1)^2}{2}-\frac{(\ell -2)(k-1)}{2}\\
		&\geq \binom{k-1}{2}+(n-k+1)(k-1)+\bigg{\lfloor}\frac{(\ell -2)(n-k+1)}{2}\bigg{\rfloor} \\
		&= {\rm{ex}}(n,\,kS_{\ell -1}).
		\end{split}
		\end{equation*} 
		Hence $H$ contains a copy of $kS_{\ell-1}$. The fact that $H$ is $kS_{\ell -1}\cup P_{\ell}$-free implies $H-S_{\ell-1}$ is $(k-1)S_{\ell -1}\cup P_{\ell}$-free. By induction hypothesis we have $e(H-S_{\ell-1})\leq (h-1)(n-\ell)$.
	Let $m_0$ be the number of edges incident with the vertices of a $S_{\ell -1}$ in $H$. Then we have
		\begin{equation*}
				m_0=e(H)-e(H-S_{\ell -1})\geq h(n-h)-(h-1)(n-\ell)=n-h^2+(h-1)\ell.
		\end{equation*}
		Then  each copy of $S_{\ell -1}$ in $H$ contains a vertex of degree at least $\frac{n-h^2+(h-1)\ell}{\ell}$. Let 
		$U\subseteq V(H)$ with $\lvert U\rvert =k$ and each vertex in $U$ belongs to distinct $S_{\ell -1}$ with degree at least $\frac{n-h^2+(h-1)\ell}{\ell}$.  Set $\overline{U}=V(H)\setminus U$, $W=\cup _{u\in U}N(u)$ and $W_0=W\cap \overline{U}$ . For any $u\in U$, we have 
		\begin{equation*}
			d_{H[W_0]}(u)\geq \frac{n-h^2+(h-1)\ell }{\ell}-(k-1).
		\end{equation*}
	Then $\lvert W_0\rvert \geq \frac{n-h^2+(h-1)\ell }{\ell}-(k-1)$ holds. If $H[\overline{U}]$ contains a copy of $P_{\ell}$, we set $W_1=W_0\setminus V(P_{\ell})$, then we have
		\begin{equation*}
			\lvert W_1\rvert \geq \lvert W_0\rvert-\ell \geq \frac{n-h^2+(h-1)\ell }{\ell}-(k-1)-\ell\geq (\ell -1)k.
		\end{equation*}
	Then we may find $k$ copies of $S_{\ell -1}$ in $H-P_{\ell}$ with $k$ center vertices in $U$ and leaves vertices in $W_1$, then $H$ contains a copy of $kS_{\ell -1}\cup P_{\ell}$, a contradiction. Therefore, $H[\overline{U}]$ is $P_{\ell}$-free and bipartite. By Lemma \ref{u1}, we have $e(H[\overline{U}])\leq \left(\lfloor \frac{\ell}{2}\rfloor-1\right)(n-k)$. Then we deduce 
		\begin{equation*}
			\begin{split}
			e(H)&\leq e\left(H[\overline{U}]\right)+k(n-k)+\frac{k^2}{4}\\
			&\leq \left(\bigg{\lfloor} \frac{\ell}{2}\bigg{\rfloor}-1\right)(n-k)+k(n-k)+\frac{k^2}{4}\\
			&\leq \left(k+\bigg{\lfloor} \frac{\ell}{2}\bigg{\rfloor}-1\right)n.
			\end{split}
		\end{equation*}
			Therefore we have $e(G)\leq e(H)\leq \left(k+\lfloor \frac{\ell}{2}\rfloor-1\right)n$.
	\end{proof}

	%	Since $H$ is bipartite, each vertex in $U$ is adjacent to an independent set of $H[\overline{U}]$. Denoting the maximum size of an independent vertex set of $H[\overline{U}]$ is $\beta(H[\overline{U}])$.
	%	If $\beta(H[\overline{U}])> n-k-\lfloor \frac{\ell}{2}\rfloor+1$, we have 
	%	\begin{equation*}
	%		\begin{split}
		%		e(H)&\leq \beta(H[\overline{U}])(n-k-\beta(H[\overline{U}]))+k\beta(H[\overline{U}])+\frac{k^2}{4}\leq (n-\beta(H[\overline{U}]))\beta(H[\overline{U}])+\frac{k^2}{4}\\
		%			&<(k+\lfloor\frac{\ell}{2}\rfloor -1) (n-k-\lfloor\frac{\ell}{2}\rfloor+1)+\frac{k^2}{4}\\
		%			&\leq (k+\lfloor \frac{\ell}{2}\rfloor-1)n,
		%	\end{split}
	%	\end{equation*}
	%	the conclusion holds. If $\beta(H[\overline{U}])\leq  n-k-\lfloor \frac{\ell}{2}\rfloor+1$, then we have
	%	\begin{equation*}
	%		\begin{split}
	%			e(H)&\leq (\lfloor \frac{\ell}{2}\rfloor-1)(n-k)+k(n-k-\lfloor \frac{\ell}{2}\rfloor+1)\\
	%			&\leq (k+\lfloor \frac{\ell}{2}\rfloor-1)n.
	%		\end{split}
	%	\end{equation*}

   	Suppose  $S_1,\,\cdots,\,S_{k}$ are $k$ finite sets, and the following inequality was proved by \cite{2020}.
   	\begin{equation}\label{cup}
   		\lvert S_{1}\cap \cdots \cap S_{k}\rvert \geq \sum_{i=1}^{k}\lvert S_{i}\rvert -(k-1)\lvert \cup^{k} _{i=1}S_i\rvert.
   	\end{equation}
  The following formulas  are introduced in \cite{arxiv2}.
   For a graph $G$ and any two vertex subsets  $A$ and $B$ of $V(G)$, $e(A,\,B)$ denotes the number of the edges of $G$ with one end vertex in $A$ and the other in $B$. 
   \begin{equation}\label{,1}
   	e(A,\,B)=e(A,\,B\setminus A)+e(A,\,A\cap B)=e(A,\,B\setminus A)+2e(G[A\cap B])+e(A\setminus B,\, A\cap B).
   \end{equation}
   \begin{equation}\label{,2}
   	e(A,\,B)\leq e(G[A\cup B])+e(G[A\cap B])\leq 2e(G),
   \end{equation}
   \begin{equation}\label{,3}
   	e(A,\,B)\leq \lvert A\rvert \lvert B\rvert .
   \end{equation}

	\section{Some characterizations of the graphs in EX$_{sp}(n,\,kS_{\ell -1}\cup P_{\ell})$}
 Suppose $G\in{\rm{Ex}}_{sp }(n,\,kS_{\ell -1}\cup P_{\ell})$. 	For $V_1,\,V_2\subseteq V(G)$ and $V_1\cap V_2=\emptyset$,  $G[V_1,\,V_2]$ denotes the induced bipartite graph with one partite set $V_1$ and the other partite set $V_2$.
 In this section we always suppose $k\geq 1$, $\ell \geq 4$,  $h=k+\lfloor\frac{\ell }{2}\rfloor-1$, $t=(\ell ^2-\ell +1)k+\frac{\ell ^2+3\ell -2}{2}$, $\alpha =\frac{1}{2(h+1)t^2}$ and $n\geq \frac{t^2}{\alpha ^3}$. The fact that $S_{n,\,h}$ is $kS_{\ell -1}\cup P_{\ell}$-free implies 
 \begin{equation}\label{3-1}
 	\rho(G) \geq \rho(S_{n,\,h})=\frac{h-1+\sqrt{4hn-4h^2+(h-1)^2}}{2}\geq \sqrt{hn}.
 \end{equation} 
 
 Firstly we will show $G$ is connected. Suppose to the contrary that $G$ is disconnected and $G_1$ is a component of $G$ with 
 $\rho(G_1)=\rho(G)\geq \sqrt{hn}$. Let $u\in V(G_1)$ with the maximum  degree in $G_1$, $G'$ be the graph obtained from $G_1$ by attaching a pendent edge at $u$ and $n-\lvert V(G_1)\rvert -1$ isolated vertices. Then $G'$ is $kS_{\ell -1}\cup P_{\ell}$-free. Otherwise there is a copy of $kS_{\ell-1}\cup P_{\ell}$ in $G_1$ as $d_{G_1}(u)\geq \rho (G_1)\geq \sqrt{hn}\geq \ell-1$. However, the fact $\rho(G')>\rho (G_1)=\rho(G)$ contradicts $G\in{\rm{Ex}}_{sp }(n,\,kS_{\ell -1}\cup P_{\ell})$. Hence $G$ is connected. Let \textbf{x} be the Perron vector of $G$, i.e., $A(G)\textbf{x}=\rho (G)\textbf{x}$ and $\Vert \textbf{x}\Vert _2=1$. Set $x_z=$max$\{x_v:\,v\in V(G)\}$.

%	Firstly, it is pointed out that $\rho (S_{n,\,h})=\frac{h-1+\sqrt{4hn-(3h^2+2h-1)}}{2}$ in \cite{2010}.  When $\ell $ is even, $S_{n,\,h}$ is $kS_{\ell -1}\cup P_{\ell}$-free, then
%	\begin{equation*}
	%		\rho(G) \geq \rho(S_{n,\,h})=\frac{h-1+\sqrt{4hn-(3h^2+2h-1)}}{2}\geq \sqrt{hn}.
	%	\end{equation*}

%	When $\ell $ is odd, $S^{+}_{n,\,h}$ is $kS_{\ell -1}\cup P_{\ell}$-free. By calculation we know $\rho(S^{+}_{n,\,h})$ is the largest root of the function
%	\begin{equation*}
	%	f(x)=x(x-1)(x-h+1)-2hx-h(n-h-2)(x-1)=0.
	%	\end{equation*} 
%	Since $f(\sqrt{hn})\leq 0$, we have $\rho(G) \geq \rho(S^{+}_{n,\,h}) \geq \sqrt{hn}$. 	
	
	%By symmetry, all vertices of subgraphs $K_{h-\frac{1}{2}}$, $\bar{K}_{n-h-\frac{3}{2}}$ or $P_2$ in $S^{+}_{n,\;h-\frac{1}{2}}=K_{h-\frac{1}{2}}\vee (P_2\cup \bar{K}_{n-h-\frac{3}{2}})$ have the same eigenvector components respectively, denoted by $x_1$, $x_2$, $x_3$ respectively. By $A(S^{+}_{n,\;h-\frac{1}{2}})X'=\rho'X'$, it is easy to see 
%	\begin{equation*}
	%	\begin{split}
	%	\rho'x_1&=(h-\frac{3}{2})x_1+(n-h-\frac{3}{2})x_2+2x_3,\\
	%	\rho'x_2&=(h-\frac{1}{2})x_1,\\
	%	\rho'x_3&=(h-\frac{1}{2})x_1+x_3.
	%	\end{split}
	%\end{equation*}
%Hence we have 
%\begin{equation*}
%	\rho'(\rho'-1)(\rho'-h+\frac{3}{2})-2\rho'(h-\frac{1}{2})-(\rho'-1)(h-\frac{1}{2})(n-h-\frac{3}{2})=0.
%\end{equation*}
%Denote $f(x)=x(x-1)(x-h+\frac{3}{2})-2x(h-\frac{1}{2})-(x-1)(h-\frac{1}{2})(n-h-\frac{3}{2})$, $\rho'$ is the largest root of $f(x)=0.$
%Since $f(\sqrt{hn})\leq 0$ when $n\geq \frac{f^2}{\alpha^3}$, we deduce $\rho'\geq \sqrt{hn}$. Therefore, $\rho \geq \rho' \geq \sqrt{hn}$ holds. 
 Write $\rho =\rho(G)$ for convenience.
Let $R=\{v\in V(G):x_v>\alpha x_z\}$, and $\overline{R}=V(G)\setminus R$. We will evaluate the cardinality of $ R$. 
\begin{lemma}\label{3-2}
		 $\lvert R\rvert \leq 2\sqrt{hn}$.
\end{lemma}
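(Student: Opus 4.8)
The plan is to convert the bound on $|R|$ into a lower bound on the largest Perron entry $x_z$, and then to produce such a lower bound from the two facts already in hand: the spectral lower bound $\rho\geq\sqrt{hn}$ of (\ref{3-1}) and the sparsity of $G$ forced by its being $kS_{\ell-1}\cup P_{\ell}$-free. For the reduction I would simply use the normalization $\Vert\textbf{x}\Vert_2=1$: since every $v\in R$ satisfies $x_v>\alpha x_z$,
\begin{equation*}
1=\sum_{v\in V(G)}x_v^2\geq\sum_{v\in R}x_v^2>|R|\,\alpha^2x_z^2 ,
\end{equation*}
so that $|R|<\frac{1}{\alpha^2x_z^2}$. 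Hence it is enough to show $x_z^2\geq\frac{1}{2\alpha^2\sqrt{hn}}$, since then $|R|\leq 2\sqrt{hn}$ drops out immediately, and the whole difficulty is concentrated in a good lower estimate of $x_z$.

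To lower bound $x_z$ I would exploit that $G$ is sparse yet spectrally large, so its Perron vector cannot be flat. By (\ref{upbound}) we have $e(G)\leq\left(h+\frac12\right)(n-1)$, and together with the elementary $\rho^2\leq 2e(G)$ this pins $\rho$ to order $\sqrt{hn}$. From the Rayleigh identity $\rho=\textbf{x}^{\top}A(G)\textbf{x}=2\sum_{uv\in E(G)}x_ux_v\leq\sum_{v}d(v)x_v^2$ I would split the vertices at a degree threshold, say $d(v)<\rho/2$ versus $d(v)\geq\rho/2$: the low-degree part contributes less than $\rho/2$ to the sum, so the high-degree part contributes at least $\rho/2$, and bounding it by $x_z^2\sum_{d(v)\geq\rho/2}d(v)\leq x_z^2\cdot 2e(G)$ yields a lower bound on $x_z^2$ of order $\rho/e(G)$.

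The main obstacle is exactly that this clean estimate gives only $x_z^2\asymp n^{-1/2}$, which is weaker than the reduction nominally wants, so the real work is a bootstrap. I would first use the eigenequation $\rho x_v=\sum_{u\sim v}x_u\leq d(v)x_z$ to get $d(v)>\alpha\rho$ for every $v\in R$, whence $\sum_{v\in R}d(v)\leq 2e(G)$ produces a crude $|R|=O(\sqrt{n})$; then I would refine $x_z$ through the two-step relation $\rho^2x_z=\sum_{u\sim z}\sum_{w\sim u}x_w$, separating the contributions of $R$ and $\overline{R}$ so that the tiny factor $\alpha$ multiplies the large mass sitting on $\overline{R}$, and feeding the crude bound on $|R|$ back in. The delicate point — and where I expect the hypothesis $n\geq t^2/\alpha^3$ together with the exact value $\alpha=\frac{1}{2(h+1)t^2}$ and the sharp Tur\'an bound (\ref{upbound}) to be indispensable — is the constant bookkeeping: one must drive the final estimate below the stated target $2\sqrt{hn}$ rather than settling for a bound that is merely $O(\sqrt{n})$.
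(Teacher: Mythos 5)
Your reduction step is where the argument breaks, and the gap is quantitative, not just a matter of unfinished bookkeeping. From $1=\sum_v x_v^2>\lvert R\rvert\,\alpha^2x_z^2$ you need $x_z^2\geq\frac{1}{2\alpha^2\sqrt{hn}}=\frac{2(h+1)^2t^4}{\sqrt{hn}}$, and this target can be \emph{false} in the stated range of $n$: at the threshold $n=t^2/\alpha^3=8(h+1)^3t^8$ it demands $x_z^2\geq\sqrt{\frac{h+1}{2h}}>0.7$, whereas the graph that actually realizes the extremum ($S_{n,h}$ or $S^{+}_{n,h}$, by Theorem \ref{2}) has largest Perron entry satisfying $x_z^2\approx\frac{1}{2h}\leq\frac14$. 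So Cauchy--Schwarz on the normalization is too lossy by the factor $\alpha^{-2}=4(h+1)^2t^4$, and no lower bound on $x_z$ alone can recover it near the bottom of the admissible range. Consistently, your degree-threshold estimate yields only $x_z^2\geq\frac{\rho}{4e(G)}\geq\frac{\sqrt{h}}{(4h+2)\sqrt{n}}$, hence $\lvert R\rvert=O\left(t^4\sqrt{hn}\right)$ rather than $2\sqrt{hn}$; and the bootstrap through $\rho^2x_z$ that is supposed to close the gap is only gestured at. Worse, carrying it out would require degree and eigenvector-entry information of the kind the paper establishes only in Lemmas \ref{3-3}--\ref{3-6}, all of which \emph{use} Lemma \ref{3-2}, so the plan risks circularity.

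The paper's proof never lower-bounds $x_z$ and never touches the normalization; the key move is to make $x_z$ cancel. For each $v\in R$ the eigenequation and (\ref{3-1}) give $\sqrt{hn}\,\alpha x_z\leq\rho x_v\leq d_{G[R]}(v)x_z+d_{G[\overline{R}]}(v)\alpha x_z$; summing over $v\in R$ yields $\lvert R\rvert\sqrt{hn}\,\alpha\leq 2e(G[R])+\alpha\,e(R,\overline{R})$ after dividing by $x_z$. Assuming $\lvert R\rvert>2\sqrt{hn}$ (so $\lvert R\rvert>t$), the Tur\'an bound (\ref{upbound}) applied to $G[R]$ gives $e(G[R])\leq\left(h+\frac12\right)\lvert R\rvert$, and Lemma \ref{u2} applied to the bipartite graph $G[R,\overline{R}]$ gives $e(R,\overline{R})\leq hn$; since $n\geq t^2/\alpha^3$ makes $\sqrt{hn}\,\alpha-(2h+1)\geq\frac12\sqrt{hn}\,\alpha$, one gets $\lvert R\rvert\leq 2\sqrt{hn}$, a contradiction. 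You identified the right ingredients --- (\ref{upbound}), (\ref{3-1}), and the sparsity of $kS_{\ell-1}\cup P_{\ell}$-free graphs, including the bipartite version --- but they must be fed into the summed eigenequation over $R$, where the unknown $x_z$ disappears, not into an $\ell_2$ estimate where it does not.
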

\begin{proof}
	For each vertex $v\in R$, we have
	\begin{align}\label{2.1}
	\sqrt{hn}\alpha x_z&\leq 	\rho x_v=\sum_{u\in N(v)}x_u\leq \sum_{u\in (N(v)\cap R)}x_z+\sum_{u\in (N(v)\setminus R)}\alpha x_z \notag \\ &=d_{G[R]}(v)x_z+d_{G[\overline{R}]}(v)\alpha x_z.
	\end{align}
Summing the inequality (\ref{2.1}) for all $v\in R$, we have
\begin{equation*}
		\begin{split}
			\lvert R\rvert \sqrt{hn}\alpha x_z\leq \sum_{v\in R}d_{G[R]}(v)x_z+\sum_{v\in R}d_{G[\overline{R}]}(v)\alpha x_z
			=2e(G[R])x_z+e(R,\,\overline{R})\alpha x_z.
		\end{split}
\end{equation*}
%With both sides of the inequality simultaneously divided by $x_z$, we get 
If $\lvert R\rvert > 2\sqrt{hn}$, we have $\lvert R\rvert > t$ as $n\geq \frac{t^2}{\alpha^3}$. Since $G[R]$  is $kS_{\ell -1}\cup P_{\ell}$-free, by (\ref{upbound}) we have $e(G[R])\leq (h+\frac{1}{2})\lvert R\rvert$. Furthermore, by Lemma \ref{u2} and the fact that the bipartite graph $G[R,\,\overline{R}]$ is $kS_{\ell -1}\cup P_{\ell}$-free, $e(R,\,\overline{R})\leq hn$ holds. Hence  we obtain
\begin{equation}\label{R}
	\lvert R\rvert  \sqrt{hn}\alpha\leq 2\left(h+\frac{1}{2}\right)\lvert R\rvert +\alpha hn.
\end{equation}
Since $n\geq \frac{t^2}{\alpha ^3}\geq \frac{(4h+2)^2}{\alpha ^2}$, we have $ \sqrt{hn}\alpha-2\left(h+\frac{1}{2}\right)\geq \frac{1}{2} \sqrt{hn}\alpha$. Then from (\ref{R}) we have
\begin{equation*}
	\lvert R\rvert\leq \frac{\alpha hn}{ \sqrt{hn}\alpha-2\left(h+\frac{1}{2}\right)}\leq \frac{\alpha hn}{\frac{1}{2} \sqrt{hn}\alpha}=2\sqrt{hn},
\end{equation*}
which is a contradiction. Therefore $\lvert R\rvert\leq 2\sqrt{hn}$ holds.
\end{proof}

Furthermore, we set  $R'=\{v\in V(G):x_v>4\alpha x_z\}$.  

\begin{lemma}\label{3-3}
	For each $v$ in $R'$, $d_G(v)>\frac{1}{3}\alpha n$ holds.
\end{lemma}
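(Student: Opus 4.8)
The plan is to work with the second–neighbourhood form of the eigenvalue equation,
$\rho^2 x_v=\sum_{u\in N(v)}\sum_{w\in N(u)}x_w=\sum_{w\in V(G)}(A^2)_{vw}x_w$,
and to play a lower bound coming from $v\in R'$ against an upper bound coming from the forbidden–subgraph edge counts. First, since $\rho\geq\sqrt{hn}$ by (\ref{3-1}) and $x_v>4\alpha x_z$, the left–hand side obeys $\rho^2 x_v>4\alpha hn\,x_z$. I would then sort the far endpoint $w$ of each length–two walk according to whether $w\in R$ or $w\in\overline R$, writing $\rho^2 x_v=T_1+T_2$ with $T_1=\sum_{w\in R}(A^2)_{vw}x_w$ and $T_2=\sum_{w\in\overline R}(A^2)_{vw}x_w$.

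The term $T_2$ is the easy one. Every $w\in\overline R$ has $x_w\leq\alpha x_z$, while the total number of length–two walks leaving $v$ is $\sum_{u\in N(v)}d_G(u)\leq 2e(G)$. Because $G$ is $kS_{\ell-1}\cup P_\ell$-free, (\ref{upbound}) gives $e(G)\leq(h+\tfrac12)n$, so $T_2<\alpha(2h+1)n\,x_z$. Subtracting this from the lower bound on $\rho^2 x_v$ isolates the dominant part, $T_1>\alpha(2h-1)n\,x_z$; in words, the number of length–two walks from $v$ terminating in $R$ is already of order $n$.

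The decisive step is to bound $T_1$ from above in a way that brings in $d_G(v)$. Since $x_w\leq x_z$ on $R$ we have $T_1\leq x_z\sum_{u\in N(v)}|N(u)\cap R|\leq 2x_z\,e\!\left(G[N(v)\cup R]\right)$, the factor two absorbing the double counting of edges lying inside $N(v)\cap R$. The induced subgraph $G[N(v)\cup R]$ is again $kS_{\ell-1}\cup P_\ell$-free, its order is at most $d_G(v)+|R|$, and the lower bound $T_1=\Omega(n)$ forces this order to exceed $t$ (otherwise $e(G[N(v)\cup R])\leq\binom{t}{2}$ would contradict $T_1>\alpha(2h-1)n\,x_z$ in view of $n\geq t^2/\alpha^3$); hence (\ref{upbound}) applies and yields $e(G[N(v)\cup R])\leq(h+\tfrac12)(d_G(v)+|R|)$. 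Combining the two estimates gives $\alpha(2h-1)n<(2h+1)\big(d_G(v)+|R|\big)$, so $d_G(v)>\frac{\alpha(2h-1)}{2h+1}\,n-|R|$.

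Finally I would discharge the error term with Lemma \ref{3-2} ($|R|\leq 2\sqrt{hn}$) and the fact that $h=k+\lfloor\ell/2\rfloor-1\geq 2$ for $k\geq 1,\ \ell\geq 4$, so that $\frac{2h-1}{2h+1}\geq\frac35$; the correction $2\sqrt{hn}=o(n)$ is swallowed because $n\geq t^2/\alpha^3$, giving $d_G(v)>\frac13\alpha n$. The hard part is exactly the third paragraph: the naive estimate $\sum_{u\in N(v)}|N(u)\cap R|\leq d_G(v)\,|R|$ only yields $d_G(v)=\Omega(\sqrt n)$, since $|R|$ is itself of order $\sqrt n$, and this is far short of the required linear bound. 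The gain to a $\Theta(n)$ bound hinges on replacing $|R|$ by the Turán–type edge estimate on the $F$-free graph $G[N(v)\cup R]$, i.e. on recognising that the order-$n$ second–neighbourhood mass reaching $R$ cannot be carried by a few high–degree vertices of $R$ but must be spread over genuinely many neighbours of $v$.
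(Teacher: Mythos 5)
Your proof is correct, and it takes a genuinely different route through the same basic machinery. The paper argues by contradiction: assuming $d_G(v)\le\frac{1}{3}\alpha n$, it first proves a claim that $e\left(G[(N(v)\cup R)\setminus\{v\}]\right)\le\frac{(5h-2)\alpha n}{6}$, then expands $\rho^2x_v$ over the distance classes $N(v)$, $N^2(v)\cap R$ and $N^2(v)\setminus R$, bounding edges inside $N[v]$ by $\left(h+\frac{1}{2}\right)d_G(v)$ via (\ref{upbound}) and edges into $N^2(v)\setminus R$ by $hn$ via the bipartite Tur\'{a}n bound of Lemma \ref{u2}, finally deriving the absurdity $h\le-\frac{2}{9}$. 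You instead split the second-neighbourhood sum by whether the far endpoint lies in $R$ or $\overline{R}$: for $\overline{R}$ the crude bound $2e(G)\le(2h+1)n$ from (\ref{upbound}) suffices, so you never invoke Lemma \ref{u2} at all; for $R$ you charge all walks to edges of the $kS_{\ell-1}\cup P_{\ell}$-free graph $G[N(v)\cup R]$ --- the same key object as the paper's claim, including the same dichotomy on whether its order reaches $t$ before (\ref{upbound}) may be applied --- but you keep $d_G(v)$ symbolic and solve, obtaining the explicit bound $d_G(v)>\frac{(2h-1)\alpha}{2h+1}n-\lvert R\rvert$, which with Lemma \ref{3-2}, $h\ge 2$, and $n\ge t^2/\alpha^3$ comfortably exceeds $\frac{1}{3}\alpha n$ (one needs $\frac{4}{15}\alpha n>2\sqrt{hn}$, which holds with much room to spare). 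Your version buys economy and slightly more: no contradiction scaffolding, one fewer auxiliary lemma, and a quantitatively stronger conclusion of roughly $\frac{3}{5}\alpha n$; the paper's finer three-way split yields sharper constants at each step, but since the lemma only needs $\frac{1}{3}\alpha n$, your coarser estimate for the light endpoints loses nothing that matters. All your estimates check out, including the factor $2$ in $T_1\le 2x_z\,e\left(G[N(v)\cup R]\right)$ (each edge of $G[N(v)\cup R]$ is counted at most once per orientation of the walk) and the case $\lvert N(v)\cup R\rvert\le t$, where $e\le\binom{t}{2}$ indeed contradicts $T_1>\alpha(2h-1)nx_z$ under $n\ge t^2/\alpha^3$.
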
 
\begin{proof}
	First we prove a claim.

	\textbf{Claim.}
	For any vertex $v$ in $R$,	if $d_G(v)\leq \frac{1}{3}\alpha n$, then $e\left(G[(N(v)\cup R)\setminus\{v\}]\right)\leq \frac{(5h-2)\alpha n}{6}$ holds.
		
	When $\lvert (N(v)\cup R)\setminus\{v\}\rvert <t$, we have
	\begin{equation*}
		  e\left(G[(N(v)\cup R)\setminus\{v\}]\right)\leq \frac{t(t-1)}{2}\leq \frac{(5h-2)\alpha n}{6},
	\end{equation*}
	  where the last inequality holds as $n\geq \frac{t^2}{\alpha^3}$ and $\alpha=\frac{1}{2(h+1)t^2}$.
	
	Now we suppose $\lvert (N(v)\cup R)\setminus\{v\}\rvert \geq t$.  Since $G[N(v)\cup R]$ is $kS_{\ell-1}\cup P_{\ell}$-free, by (\ref{upbound}), we have 
	\begin{equation*}
		e(G[N(v)\cup R])\leq \left(h+\frac{1}{2}\right)\left(d_{G}(v) + \lvert R\rvert\right),
	\end{equation*}
which implies 
\begin{equation*}
	\begin{split}
	e\left(G[(N(v)\cup R)\setminus\{v\}]\right)&\leq \left(h+\frac{1}{2}\right)\left(d_{G}(v) + \lvert R\rvert\right)-d_G(v)\\
	%&\leq (h-1)d_{G}(v)+2h\sqrt{hn}\\
%	&\leq (h-1)d_{G}(v)+\frac{1}{2}\alpha hn\\
	&\leq\frac{1}{3}\alpha\left(h-\frac{1}{2}\right)n+2\left(h+\frac{1}{2}\right)\sqrt{hn}\\
%	&\leq \frac{1}{3}\alpha \left(h-\frac{1}{2}\right)n+\left(\frac{1}{2}h-\frac{1}{6}\right)\alpha n\\
	&= \frac{(5h-2)\alpha n}{6},
	\end{split}
\end{equation*}
where the last second inequality follows from $n\geq \frac{t^2}{\alpha^3}\geq \frac{(12h+6)^2h}{(3h-1)^2\alpha ^2}$.

Suppose to the contrary that there is a vertex $v\in R'$ with $d_{G}(v)\leq \frac{1}{3}\alpha n$. By the fact  $$\sum_{u\in (N^2(v)\cap R)}d_{G[N(v)]}(u)= e(N(v),\,N^2(v)\cap R)\leq e(G[(N(v)\cup R)\setminus\{v\}]),$$  and the above claim, we obtain 
\begin{equation}\label{2.5}
	\sum_{u\in (N^2(v)\cap R)}d_{G[N(v)]}(u)x_u\leq e(N(v),\,N^2(v)\cap R)x_z\leq \frac{(5h-2)\alpha nx_z}{6}.
\end{equation} 

 From
\begin{equation*}
	4\sqrt{hn}\alpha x_z\leq \rho x_v= \sum_{u\in N(v)}x_u\leq d_{G}(v)x_z,
\end{equation*}
 we have $d_{G}(v)\geq 4\sqrt{hn}\alpha \geq t$. Since $G\big[N\left[v\right]\big]$ is $kS_{\ell -1}\cup P_{\ell}$-free, we obtain 
  $e\left(G\big[N\left[v\right]\big]\right)\leq (h+\frac{1}{2})d_{G}(v)$.  Therefore, $e(G[N(v)])\leq \left(h-\frac{1}{2}\right)d_{G}(v)$ holds. It follows
\begin{equation}\label{2.4}
	\left(d_{G}(v)+2e(G[N(v)])\right)x_z\leq 2hd_{G}(v)x_z\leq \frac{2}{3}\alpha hnx_z.
\end{equation}

Moreover, from Lemma \ref{u2}, we have $e(N(v),\,N^2(v)\setminus R)\leq hn $,
which implies  
\begin{equation}\label{2.6}
	\sum_{u\in (N^2(v)\setminus R)}d_{G[N(v)]}(u)x_u\leq e(N(v),\,N^2(v)\setminus R)\alpha x_z\leq  \alpha h n x_z.
\end{equation}

Combining (\ref{2.5}) \textendash $\,$ (\ref{2.6}), we have
\begin{align}
	\rho ^2x_v&=d_{G}(v)x_v+\sum_{u\in N(v)}d_{G[N(v)]}(u)x_u +\sum_{u\in N^2(v)}d_{G[N(v)]}(u)x_u  \notag \\
	  	&\leq \big(d_{G}(v) +2e(G[N(v)])\big)x_z +\sum_{u\in (N^2(v)\cap R)}d_{G[N(v)]}(u)x_u+\sum_{u\in (N^2(v)\setminus R)}d_{G[N(v)]}(u)x_u\notag \\
        &\leq \frac{2}{3}\alpha hnx_z+\frac{(5h-2)\alpha nx_z}{6} +\alpha  hn x_z\notag.
\end{align}

On the other hand, by  (\ref{3-1}) we have 
\begin{equation*}
	\rho ^2x_v\geq hnx_v\geq 4\alpha h nx_z.
\end{equation*}

Then we obtain the following inequality
\begin{equation*}
	4\alpha hnx_z\leq \frac{2}{3}\alpha h nx_z+\frac{(5h-2)\alpha nx_z}{6}+ \alpha hn x_z.
\end{equation*}
Simplifying the inequality, we obtain $h\leq -\frac{2}{9}$, which is a contradiction. Therefore, each vertex in $R'$ has degree larger than $\frac{1}{3}\alpha n$.
\end{proof}
\begin{lemma}\label{3-3-2}
	$\lvert R'\rvert\leq \frac{3(h+1)}{\alpha}$.
	\end{lemma}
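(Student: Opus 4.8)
The plan is to pin down $\lvert R'\rvert$ by double counting the edges incident with $R'$, playing a degree lower bound coming from Lemma \ref{3-3} against a Tur\'{a}n-type degree upper bound. Writing $\overline{R'}=V(G)\setminus R'$, I would use the identity $\sum_{v\in R'}d_G(v)=2e(G[R'])+e(R',\overline{R'})$ as the backbone of the argument. Since Lemma \ref{3-3} tells us that every $v\in R'$ satisfies $d_G(v)>\frac{1}{3}\alpha n$, summing gives the lower bound
\begin{equation*}
	\sum_{v\in R'}d_G(v)>\frac{1}{3}\alpha n\lvert R'\rvert .
\end{equation*}

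For the matching upper bound I would control the two pieces separately by estimates already on hand. Because $G[R']$ is $kS_{\ell-1}\cup P_{\ell}$-free, inequality (\ref{upbound}) gives $e(G[R'])\leq\left(h+\frac{1}{2}\right)\lvert R'\rvert$; and because the bipartite graph $G[R',\overline{R'}]$ is also $kS_{\ell-1}\cup P_{\ell}$-free, Lemma \ref{u2} yields $e(R',\overline{R'})\leq hn$. Hence $\sum_{v\in R'}d_G(v)\leq(2h+1)\lvert R'\rvert+hn$, and combining with the lower bound produces
\begin{equation*}
	\left(\frac{1}{3}\alpha n-(2h+1)\right)\lvert R'\rvert<hn .
\end{equation*}
It then remains only to check that the hypothesis $n\geq\frac{t^2}{\alpha^3}$ makes the bracketed coefficient positive and large enough to force $\lvert R'\rvert\leq\frac{3(h+1)}{\alpha}$. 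A short rearrangement shows the desired conclusion is equivalent to the single requirement $\alpha n\geq 3(h+1)(2h+1)$, i.e. $n\geq 6(h+1)^2(2h+1)t^2$, which is comfortably dominated by $n\geq\frac{t^2}{\alpha^3}=8(h+1)^3t^8$ in view of $\frac{1}{\alpha}=2(h+1)t^2$.

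The one point that genuinely needs care is that (\ref{upbound}), being a consequence of Theorem \ref{F-1}, is only valid once $G[R']$ has at least $t$ vertices; this is precisely the proviso already invoked in the proofs of Lemmas \ref{3-2} and \ref{3-3}. I would therefore dispose of the trivial case $\lvert R'\rvert<t$ at the outset, where the conclusion is immediate from $t\leq 6(h+1)^2t^2=\frac{3(h+1)}{\alpha}$, and run the counting argument only when $\lvert R'\rvert\geq t$. Thus the main obstacle is not conceptual but bookkeeping: arranging the constants so that the chosen threshold on $n$ simultaneously validates the applications of (\ref{upbound}) and Lemma \ref{u2}, keeps $\frac{1}{3}\alpha n-(2h+1)$ positive, and closes the final arithmetic — all three being polynomial-in-$(h,t)$ inequalities swamped by the $t^8$ growth of $n$.
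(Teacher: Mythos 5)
Your proof is correct and follows essentially the same route as the paper's: split off the trivial case $\lvert R'\rvert< t$, then double count $\sum_{v\in R'}d_G(v)$, bounded below by Lemma \ref{3-3} and above by the Tur\'{a}n bound (\ref{upbound}) on $e(G[R'])$ plus a linear-in-$n$ bound on the remaining edges, closing with $\alpha n\geq 3(h+1)(2h+1)$. The only cosmetic difference is that you bound the cross edges directly by $e(R',\overline{R'})\leq hn$ via Lemma \ref{u2} (as the paper does in Lemma \ref{3-2}), whereas the paper's own proof routes the same count through $e(G)\leq \left(h+\frac{1}{2}\right)n$; the two yield essentially the same final inequality.
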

\begin{proof}
If $\lvert R'\rvert \leq t$, then $\lvert R'\rvert\leq \frac{3(h+1)}{\alpha}$ holds as $\alpha =\frac{1}{2(h+1)t^2}$. Now we suppose $\lvert R'\rvert >t$, then by (\ref{upbound}) we have $e(G[R'])\leq \left(h+\frac{1}{2}\right)\lvert R'\rvert$. By Lemma \ref{3-3}, $\sum_{v\in R'}d_{G}(v)\geq \frac{1}{3}\alpha n\lvert R'\rvert $ holds, and we have
\begin{equation*}
	\sum_{v\in \overline{R'}}d_{G}(v)\geq e(\overline{R'},\, R')=\sum_{v\in R'}d_{G}(v)-2e(G[R'])\geq \frac{1}{3}\alpha n\lvert R'\rvert -2\left(h+\frac{1}{2}\right)\lvert R'\rvert.
\end{equation*}
Therefore we have 
\begin{equation*}
	\begin{split}
	\left(h+\frac{1}{2}\right)n&\geq e(G)=\frac{1}{2}\sum_{v\in R'}d_{G}(v)+\frac{1}{2}\sum_{v\in \overline{R'}}d_{G}(v)\\
	&\geq \frac{1}{6}\alpha n \lvert R'\rvert +\frac{1}{6}\alpha n \lvert R'\rvert-\left(h+\frac{1}{2}\right)\lvert R'\rvert\\
	&=\frac{1}{3}\alpha n \lvert R'\rvert -\left(h+\frac{1}{2}\right)\lvert R'\rvert.
	\end{split}
\end{equation*}
Noting $n\geq \frac{t^2}{\alpha ^3}$, we have $\alpha n\geq (6h+3)(h+1)$, together with $\left(h+\frac{1}{2}\right)n\geq \frac{1}{3}\alpha n \lvert R'\rvert -\left(h+\frac{1}{2}\right)\lvert R'\rvert$, we obtain
\begin{equation*}
	\lvert R'\rvert\leq \frac{(6h+3)n}{2\alpha n-6h-3}\leq \frac{3(h+1)}{\alpha}.
	\end{equation*}
\end{proof}

\begin{lemma}\label{3-4}
	If $v $ is a vertex  with $x_v=mx_z$ and $\frac{1}{2(h+1)}\leq m\leq 1$, then $d_{G}(v)>\left(m-\frac{1}{6(h+1)}\right)n$.
\end{lemma}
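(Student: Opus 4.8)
The plan is to run the two-step eigenvalue argument at $v$, as in the proof of Lemma \ref{3-3}, but to track the coefficient of $d_G(v)$ carefully enough to recover a near-optimal constant. First note that $m\ge\frac{1}{2(h+1)}>4\alpha$ (this is just $t^2>4$, which holds since $t\ge 26$), so $v\in R'$ and Lemma \ref{3-3} gives $d_G(v)>\frac13\alpha n\ge t$; hence (\ref{upbound}) applies to $G[N[v]]$ and to the other $kS_{\ell-1}\cup P_\ell$-free subgraphs that will appear. Writing $d=d_G(v)$, I expand
$$\rho^2 x_v = d\,x_v + \sum_{u\in N(v)} d_{G[N(v)]}(u)\,x_u + \sum_{u\in N^2(v)} d_{G[N(v)]}(u)\,x_u$$
and bound the left side below by $\rho^2 x_v\ge hn\,x_v = hnm\,x_z$ using (\ref{3-1}). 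The aim is to dominate the right side by $(h+o(1))\,d\,x_z$ together with terms of order $\sqrt{hn}\,x_z$ and $\alpha n\,x_z$; solving for $d$ and absorbing the lower-order terms with $n\ge t^2/\alpha^3$ should then give $d>(m-\frac{1}{6(h+1)})n$.

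To estimate the two sums I would split every relevant vertex according to whether it lies in $R$. On $\overline R$ I use $x_u\le\alpha x_z$ together with Lemma \ref{u2}: the bipartite graph between $N(v)$ and $N^2(v)\setminus R$, and the low part of $G[N(v)]$, are $kS_{\ell-1}\cup P_\ell$-free and so have at most $hn$ edges, whence these contributions are $O(\alpha n)\,x_z$. On $R$ I use $x_u\le x_z$, Lemma \ref{3-2} to bound $|R|\le 2\sqrt{hn}$, and the edge-counting identities for $e(A,B)$ recorded above together with (\ref{upbound}) applied to $G[N(v)\cup R]$ and to $G[N(v)\cap R]\subseteq G[R]$; since $|R|\le 2\sqrt{hn}$ the latter only costs an $O(\sqrt{hn})$ error. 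The gain that makes the constant affordable is that the bulk of $N(v)$ sits in $\overline R$ and is weighted by $\alpha$, so that the only $\Theta(d)$-sized term surviving is the count of edges incident to the few high vertices, namely $e(N(v)\cap R,\,N(v)\setminus R)$.

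The hard part will be to squeeze this surviving term. Estimating $e(N(v)\cap R,N(v)\setminus R)$ bluntly by Lemma \ref{u2} gives $\le h\,d+O(\sqrt{hn})$, i.e.\ coefficient $h$; combined with the $d\,x_v=m\,d\,x_z$ term this yields only $d\gtrsim\frac{hm}{m+h}\,n$, whose defect $\frac{1}{h+1}$ exceeds the target $\frac{1}{6(h+1)}$ by roughly a factor $6$. To reach the stated bound I must show that $N(v)\cap R$ behaves like an $(h-1)$-vertex dominating set rather than an arbitrary small part: concretely, that the high-$x$ second neighbours $N^2(v)\cap R$ contribute negligibly and that the bipartite graph between $N(v)\cap R$ and $N(v)\setminus R$ carries at most $(h-1)\,d+o(n)$ edges (the complete-bipartite value attained by $S_{n,h}$ and $S^{+}_{n,h}$). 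With coefficient $h-1$ the inequality becomes $d\ge\frac{hm}{m+h-1}\,n-o(n)$, and since $m(1-m)+\frac{1}{6(h+1)}(m+h-1)\ge 0$ for every $m\le 1$ this exceeds $(m-\frac{1}{6(h+1)})n$ once the $O(\sqrt{hn})$ and $O(\alpha n)$ corrections are absorbed using $n\ge t^2/\alpha^3$. Obtaining the coefficient $h-1$ uniformly for both even and odd $\ell$ is the crux of the argument.
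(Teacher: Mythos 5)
Your skeleton matches the paper's proof of Lemma \ref{3-4}: argue by contradiction, expand $\rho^2 x_v=d_G(v)x_v+\sum_{u\in N(v)}d_{G[N(v)]}(u)x_u+\sum_{u\in N^2(v)}d_{G[N(v)]}(u)x_u$, bound below by $hnm\,x_z$ via (\ref{3-1}), and split the second-step vertices by eigenvector weight. Your diagnosis is also exactly right: estimating the surviving bipartite term by Lemma \ref{u2} with parameter $k$ gives coefficient $h$ and only $d_G(v)\gtrsim\frac{hm}{m+h}n$, which fails near $m=1$, so coefficient $h-1$ is needed. But you then declare obtaining $h-1$ to be ``the crux'' and stop --- and that is precisely where the paper's one essential idea lives, so the proposal has a genuine gap. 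The missing idea is a freeness upgrade, not a structural ``dominating set'' claim: since $x_v\geq\frac{1}{2(h+1)}x_z>4\alpha x_z$, we have $v\in R'$ and Lemma \ref{3-3} gives $d_G(v)>\frac{1}{3}\alpha n$, which dwarfs the $k\ell$ vertices of any fixed forest; hence if the bipartite graph $G[R'\setminus\{v\},\,N(v)\setminus R']$ contained a copy of $(k-1)S_{\ell-1}\cup P_{\ell}$, one could append a star $S_{\ell-1}$ centered at $v$ with leaves among $v$'s unused neighbours and produce $kS_{\ell-1}\cup P_{\ell}$ in $G$. So that bipartite graph is $(k-1)S_{\ell-1}\cup P_{\ell}$-free, and Lemma \ref{u2} applied with $k-1$ in place of $k$ yields $e(R'\setminus\{v\},\,N(v)\setminus R')\leq (h-1)\left(d_G(v)+\lvert R'\rvert-1\right)$, because $(k-1)+\lfloor\ell/2\rfloor-1=h-1$. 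Note this step is parity-free: your worry about handling even and odd $\ell$ separately does not arise, since the parity is already absorbed inside Lemma \ref{u2}.

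Two secondary points. First, the paper runs the split with $R'$ rather than your $R$, because Lemma \ref{3-3-2} gives the $n$-independent bound $\lvert R'\rvert\leq\frac{3(h+1)}{\alpha}$, so after the identity (\ref{,1}) the residual terms $(h-1)\lvert R'\rvert+2e(G[R'])$ --- which in particular swallow the contribution of $N^2(v)\cap R'$ that you proposed to treat separately --- are at most $\frac{9h(h+1)}{\alpha}$, a constant. Your choice of $R$ with $\lvert R\rvert\leq 2\sqrt{hn}$ would still leave these terms $o(n)$ and could be made to work, but only after rerunning the star-extension argument above for $R$, which you have not done. Second, your closing arithmetic is sound and matches the paper: with coefficient $h-1$, the inequality $hnm\leq d_G(v)(m+h-1)+O(\alpha n)+O(1/\alpha)$ combined with the assumed bound $d_G(v)\leq\left(m-\frac{1}{6(h+1)}\right)n$ reduces to a quadratic in $m$ (the paper's $g(x)$, minimized at $x=1$ on $\left[\frac{1}{2(h+1)},\,1\right]$), forcing $n\leq 54h(h+1)^4t^2$ and contradicting $n\geq 8(h+1)^3t^8$. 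So the frame and the error analysis are right; what is missing is the single lemma-level step that turns the sketch into a proof.
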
	
	\begin{proof}
	Suppose to the contrary that there is a vertex $v$ with $x_v=mx_z$ $\left(\frac{1}{2(h+1)}\leq m\leq 1\right)$ and $d_{G}(v)\leq \left(m-\frac{1}{6(h+1)}\right)n.$ By the definition of $R'$, $v\in R'$ holds.
	Let $M=N(v)\cup N^2(v)$. From (\ref{upbound}) and (\ref{,2}), we obtain
\begin{equation}\label{2.9}
	e(M\setminus R',\, N(v))\leq 2e(G)\leq 2\left(h+\frac{1}{2}\right)n.
\end{equation}

Since $v\in R'$, we have $d_G(v)>\frac{1}{3}\alpha n$  from Lemma \ref{3-3}. Then $G[R'\setminus \{v\},\,N(v)\setminus R']$ is $(k-1)S_{\ell -1}\cup P_{\ell}$-free. Otherwise we may add a copy of $S_{\ell -1}$ with $v$  as center vertex to obtain a copy of  $kS_{\ell -1}\cup P_{\ell}$ in $G$. Then by Lemma \ref{u2} we have
$$e(R'\setminus \{v\},\,N(v)\setminus R')\leq (h-1)(d_G(v)+\lvert R'\rvert -1).$$
Then from Lemma \ref{u2} and (\ref{,1}) we have
\begin{align}\label{2.10}
	e\left(M\cap R',N(v)\right)&\leq e(R'\setminus \{v\},\, N(v)) \notag \\
	&= e(R'\setminus \{v\},\,N(v)\setminus R')+2e\left(G[R'\cap N(v)]\right)+e\left(R'\setminus N[v],\, R'\cap N(v)\right) \notag \\
	&\leq (h-1)(d_G(v)+\lvert R'\rvert -1)+2e(G[R']).
\end{align}

If $\lvert R'\rvert \geq t$, then by (\ref{upbound}), $e(G[R'])\leq (h+\frac{1}{2})\lvert R'\rvert$ holds. From Lemma \ref{3-3-2}, $\lvert R'\rvert \leq \frac{3(h+1)}{\alpha}$ holds, then we have 
\begin{align}\label{2.11}
	(h-1)\lvert R'\rvert +2e(G[R'])&\leq (h-1)\lvert R'\rvert +(2h+1)\lvert R'\rvert \leq \frac{9h(h+1)}{\alpha}.
\end{align}

If $\lvert R'\rvert < t$, then $e(G[R'])<\frac{t(t-1)}{2}$ holds. We have
\begin{align}\label{2.12}
	(h-1)\lvert R'\rvert +2e(G[R'])\leq (h-1)t+t(t-1)\leq \frac{9h(h+1)}{\alpha},
\end{align}
the last inequality holds as $\alpha =\frac{1}{2(h+1)t^2}$.

For each vertex $u\in M\setminus R'$, the definition of $R'$ implies $x_u\leq 4\alpha x_z$. Therefore, combining (\ref{2.9}) \textendash $\;$(\ref{2.12}), we have
\begin{equation*}\label{2.8}
	\begin{split}
	\rho ^2x_v&=d_{G}(v)x_v+\sum_{u\in (M\setminus R')}d_{G[N(v)]}(u)x_u+\sum_{u\in (M\cap R')}d_{G[N(v)]}(u)x_u \notag\\
	&\leq d_{G}(v)x_v+e(M\setminus R', \,N(v))4\alpha x_z +e(M\cap R', \,N(v))x_z\\
	%&\leq d_{G}(v)x_v+8\alpha hnx_z+hd_{G}(v)x_z+(3h+\ell -2)\frac{3(h+1)}{\alpha}x_z-(h-1)\ell x_z\\
	&\leq d_{G}(v)mx_z+8(h+\frac{1}{2})\alpha n x_z+(h-1)d_{G}(v)x_z+\frac{9h(h+1)}{\alpha}x_z-(h-1)x_z.
	\end{split}
\end{equation*}

On the other hand, by  (\ref{3-1}), we have 
\begin{equation*}
	\rho ^2x_v\geq hn x_v=hnmx_z.
\end{equation*}
Therefore, we obtain
\begin{equation*}
hnm\leq d_{G}(v)m+ 8\left(h+\frac{1}{2}\right)\alpha n+(h-1)d_{G}(v)+\frac{9h(h+1)}{\alpha}-(h-1).
\end{equation*}
Since $d_{G}(v)\leq \left(m-\frac{1}{6(h+1)}\right)n$,  we have
\begin{equation*}
	-nm^2+\frac{6h+7}{6(h+1)}nm+\frac{h-1}{6(h+1)}n-8\alpha \left(h+\frac{1}{2}\right)n\leq \frac{9h(h+1)}{\alpha}-(h-1).
\end{equation*}
%\begin{equation*}
%	hnm-mn(m-\frac{1}{6(h+1)})-(h-1)n(m-\frac{1}{6(h+1)})\leq 8\alpha (h+\frac{1}{2})n+\frac{9h(h+1)}{\alpha}-(h-1),
%\end{equation*}
Let $$g(x)=-nx^2+\frac{6h+7}{6(h+1)}nx+\frac{h-1}{6(h+1)}n-8\alpha (h+\frac{1}{2})n.$$ Then $g(x)\geq g(1)$ holds when $x\in [\frac{1}{2(h+1)},\,1]$. So $g(m)\geq g(1)$ holds and we have
\begin{equation*}
	\left(\frac{h}{6(h+1)}-8\alpha (h+\frac{1}{2})\right)n\leq \frac{9h(h+1)}{\alpha}-(h-1).
\end{equation*}
Then we get $n\leq 54h(h+1)^4t^2$, while it contradicts the fact $n\geq 8(h+1)^3t^8$. 
\end{proof}

Let $R''=\{v\in V(G):\,x_v\geq \frac{1}{2(h+1)}x_z\}$. Clearly $R''\subseteq R'\subseteq R$ holds. To prove the cardinality of $R''$, we prove a lower bound of the degree of vertices in $R''$.
\begin{lemma}\label{3-5}
	For each $v$ in $R''$, we have $d_{G}(v)\geq \left(1-\frac{5}{6(h+1)}\right)n$.
\end{lemma}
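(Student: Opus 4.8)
The plan is to show that every $v\in R''$ in fact has Perron entry very close to $x_z$, and then to convert this into the stated degree bound through the quadratic expansion of $\rho^2x_v$ already exploited in Lemma \ref{3-4}. Throughout write $x_v=mx_z$ with $\frac{1}{2(h+1)}\le m\le 1$, and recall $R''\subseteq R'\subseteq R$, so $v$ is a legitimate star centre ($d_G(v)\ge t$) and all the machinery of Lemmas \ref{3-2}, \ref{3-3-2} and \ref{3-4} applies.

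First I would revisit the computation in the proof of Lemma \ref{3-4}, but read its central inequality as a lower bound on $d_G(v)$ rather than as a source of contradiction. Splitting $N(v)\cup N^2(v)$ into its part inside $R'$ and its part outside, bounding the outside contribution by $4\alpha x_z$ times the relevant $e(\,\cdot\,,N(v))$ (controlled through (\ref{upbound}), (\ref{,2}) and Lemma \ref{u2}), and bounding the inside contribution via the decomposition (\ref{,1}) together with the $(k-1)S_{\ell-1}\cup P_\ell$-freeness of $G[R'\setminus\{v\},N(v)\setminus R']$ and the cardinality estimates of Lemmas \ref{3-2} and \ref{3-3-2}, yields the unconditional inequality $hnm\le(m+h-1)d_G(v)+E$, where $E=8(h+\tfrac12)\alpha n+\tfrac{9h(h+1)}{\alpha}-(h-1)$. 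Since $\alpha=\frac{1}{2(h+1)t^2}$ and $n\ge 8(h+1)^3t^8$, one checks $E\le\frac{n}{6(h+1)}$, and because $\frac{hm}{m+h-1}\ge m$ for $m\le 1$ this rearranges to
\begin{equation*}
	d_G(v)\ge nm-\frac{n}{6(h+1)}.
\end{equation*}
In particular $d_G(v)\ge\bigl(1-\tfrac{5}{6(h+1)}\bigr)n$ follows at once whenever $m\ge 1-\tfrac{2}{3(h+1)}$.

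The remaining and essential task is therefore to rule out any $v\in R''$ with $m<1-\tfrac{2}{3(h+1)}$, which I would do by extremality. Applying Lemma \ref{3-4} with $m=1$ to the maximum vertex $z$ gives $d_G(z)>\bigl(1-\tfrac{1}{6(h+1)}\bigr)n$. Form $G^{*}$ from $G$ by deleting all edges at $v$ and joining $v$ to $N_G(z)\setminus\{v\}$, so that $v$ and $z$ become non-adjacent twins. Because $1-m\ge\tfrac{2}{3(h+1)}$ while $\rho=\rho(G)\ge\sqrt{hn}$ is huge, evaluating the Rayleigh quotient of $A(G^{*})$ at the Perron vector $\mathbf{x}$ gives $\mathbf{x}^{\top}A(G^{*})\mathbf{x}-\rho\ge 2x_v\bigl(\rho(x_z-x_v)-x_v\bigr)>0$, hence $\rho(G^{*})>\rho(G)$. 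As $G\in{\rm Ex}_{sp}(n,kS_{\ell-1}\cup P_\ell)$, the graph $G^{*}$ must then contain a copy of $kS_{\ell-1}\cup P_\ell$; since $G^{*}-v=G-v$ is $F$-free, any such copy uses $v$, and I would deduce from it a copy living entirely in $G$, which is the desired contradiction.

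The main obstacle is precisely this last deduction: showing that cloning $z$ onto $v$ creates no genuinely new $kS_{\ell-1}\cup P_\ell$. When the copy avoids $z$ the twin structure settles it immediately by swapping $v$ for $z$. The delicate case is when a copy uses both twins, which must be rerouted off $v$: here one has to exploit that $z$ carries an enormous neighbourhood, that $kS_{\ell-1}\cup P_\ell$ has only $O(t)\ll n$ vertices, and that $v$ has degree at most $\ell-1$ inside any fixed copy, so that a fresh vertex realising $v$'s few adjacencies can be located and the offending component reembedded into $G$. Making this substitution precise — guaranteeing the existence of such a fresh vertex with the required bounded set of adjacencies — is the heart of the argument; the quantitative bounds $d_G(z)>\bigl(1-\tfrac{1}{6(h+1)}\bigr)n$ and $|V(kS_{\ell-1}\cup P_\ell)|\le t$ are exactly what drive it.
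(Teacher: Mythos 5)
Your reduction is sound and matches the paper's opening move: by Lemma \ref{3-4} it suffices to show $x_v\geq \left(1-\frac{2}{3(h+1)}\right)x_z$ for every $v\in R''$, and your rearrangement of the inequality from Lemma \ref{3-4} into a lower bound on $d_G(v)$ is a correct (if redundant) restatement of that lemma. The Rayleigh computation showing $\rho(G^{*})>\rho(G)$ for the cloned graph is also fine. The gap is exactly where you flag it, and it is not a routine detail but a step that fails as proposed. When a copy of $kS_{\ell-1}\cup P_\ell$ in $G^{*}$ uses both twins, the neighbours of $v$ inside the copy are \emph{specific} vertices of $N_G(z)$, and you need a fresh vertex adjacent in $G$ to all of them. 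Nothing established so far supplies one: the only vertex guaranteed to be a common neighbour of an arbitrary subset of $N_G(z)$ is $z$ itself, which is already occupied by the copy. Concretely, if $v$ is a leaf of a star whose centre $c$ has all of its $G$-neighbours inside the copy, no replacement pendant vertex exists; if $v$ is an interior path vertex, its two copy-neighbours may have no common $G$-neighbour other than $z$ and $v$. The quantitative facts you invoke ($d_G(z)>\left(1-\frac{1}{6(h+1)}\right)n$ and $|V(kS_{\ell-1}\cup P_\ell)|\leq t$) produce fresh vertices \emph{in $N_G(z)$}, not fresh vertices with prescribed adjacencies to arbitrary, possibly low-degree, vertices of $N_G(z)$, so the "reembedding" cannot be completed from the available hypotheses.

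The paper avoids graph perturbation here altogether. It takes a hypothetical $v\in R''$ with $x_v=mx_z$, $m<1-\frac{2}{3(h+1)}$, and expands $\rho^2 x_z$ over walks of length two from $z$, isolating the walks that end at $v$: each such walk loses a factor $\frac{2}{3(h+1)}x_z$, and after bounding all remaining contributions by $hn+|R'|+20\alpha hn$ (via (\ref{upbound}), Lemma \ref{u2} and Lemma \ref{3-3-2}) and comparing with $\rho^2 x_z\geq hnx_z$, this yields the codegree bound $d_{G[N(z)]}(v)<\frac{n}{6(h+1)}$. On the other hand, Lemma \ref{3-4} applied to $z$ and to $v$ gives $d_G(z)>\left(1-\frac{1}{6(h+1)}\right)n$ and $d_G(v)>\frac{n}{3(h+1)}$, whence $|N(z)\cap N(v)|\geq d_G(z)+d_G(v)-n\geq \frac{n}{6(h+1)}$, a contradiction. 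Note that you already derived both degree bounds in your own argument; if you replace the cloning step by this codegree count at $z$, your proof closes along the paper's lines without ever needing to certify that $G^{*}$ is $kS_{\ell-1}\cup P_\ell$-free.
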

\begin{proof}
	By Lemma \ref{3-4}, it suffices  to prove $x_v\geq \left(1-\frac{2}{3(h+1)}\right)x_z$ for every $v\in R''$. Suppose to the contrary that there is a vertex $v\in R''$ such that $x_v=mx_z$, where $\frac{1}{2(h+1)}\leq m<1-\frac{2}{3(h+1)}$. Let $R_1=N(z)\cap R'$, $R_2=N^2(z)\cap R'$, $S_1=N(z)\setminus R_1$ and $S_2=N^2(z)\setminus R_2$. 
	By (\ref{upbound}) and Lemma \ref{u2}, we have 
	\begin{equation*}
		\begin{split}
			2e(S_1)\leq 2\left(h+\frac{1}{2}\right)n,\quad 
			e(R_1,\,S_1)\leq hn\quad{\rm{and}}\quad
			e(N(z),\,S_2)\leq hn.
		\end{split}
	\end{equation*}
	From Lemma  \ref{3-3-2} we have $\lvert R'\rvert \leq \frac{3(h+1)}{\alpha}$, therefore we deduce 
	\begin{align}
		&2e(G[S_1])4\alpha +2e(G[R_1\cup R_2])+e(R_1,\,S_1)4\alpha +e(N(z),\,S_2)4\alpha \notag \\
		\leq& 2\left(h+\frac{1}{2}\right)n4\alpha +2\binom{\lvert R'\rvert}{2}+hn4\alpha +hn4\alpha \notag \\
		\leq& 19\alpha h n.
	\end{align}
Noting that $$e(S_1,(R_1\cup R_2\cup \{z\})\setminus\{v\})+\lvert N(z)\cap N(v)\rvert=e(S_1,\,R_1\cup R_2\cup \{z\})+\lvert R_1\cap N(v)\rvert$$ holds. By Lemma \ref{u2}, we have $e(S_1,\,R_1\cup R_2\cup \{z\})\leq hn.$ Then
	\begin{align*}
		\rho ^2x_z&=\sum_{u\sim z}\sum_{w\sim u}x_w\notag \\
		&\leq d_G(z)x_z+\sum_{u\in S_1}\sum_{w\in (R_1\cup R_2),\atop w\sim u}x_w+2e(G[S_1])4\alpha x_z\\
		&\quad\quad +2e(G[R_1\cup R_2])x_z+e(R_1,\,S_1)4\alpha x_z+e(N(z),\,S_2)4\alpha x_z\notag\\
		&\leq e(S_1,\,(R_1\cup R_2\cup \{z\})\setminus\{v\})x_z+\lvert N(z)\cap N(v)\rvert x_v+\lvert R'\rvert x_z+19\alpha h nx_z\notag \\
		&\leq e(S_1,\,(R_1\cup R_2\cup \{z\})\setminus\{v\})x_z+\lvert N(z)\cap N(v)\rvert \left(1-\frac{2}{3(h+1)}\right)x_z+20\alpha hnx_z\notag \\
		&=e(S_1,\,R_1\cup R_2\cup \{z\})x_z+\lvert R_1\cap N(v)\rvert x_z-\frac{2}{3(h+1)}\lvert N(z)\cap N(v)\rvert x_z+20\alpha hnx_z\\
		&\leq hnx_z+\lvert R'\rvert x_z-\frac{2}{3(h+1)}d_{G[N(z)]}(v)x_z+20\alpha hnx_z.
	\end{align*}
Together with the fact that $hnx_z\leq \rho ^2x_z$ and $\lvert R'\rvert \leq \frac{3(h+1)}{\alpha}$, we obtain
\begin{equation*}
	\frac{2}{3(h+1)}d_{G[N(z)]}(v)\leq 20\alpha hn+\frac{3(h+1)}{\alpha }.
\end{equation*}

Since $ \alpha =\frac{1}{2(h+1)t^2}$ and $n\geq \frac{t^2}{\alpha ^3}$, we get 
\begin{align}\label{m}
		d_{G[N(z)]}(v)\leq 30(h+1)\alpha hn+\frac{9(h+1)^2}{2\alpha}<\frac{n}{6(h+1)}.
\end{align}
	Since $z,\,v\in R'$, by Lemma \ref{3-4} we have $d_G(v)>\left(m-\frac{1}{6(h+1)}\right)n>\frac{n}{3(h+1)}$ and $d_G(z)>\left(1-\frac{1}{6(h+1)}\right)n$. So 
	$$d_{G[N(z)]}(v)\geq \frac{n}{3(h+1)}-\frac{n}{6(h+1)}=\frac{n}{6(h+1)}$$ holds, a contradiction to (\ref{m}). Hence we have $d_G(v)\geq (1-\frac{5}{6(h+1)})n$ for each $v$ in $R''$.
\end{proof}
 %$\leq \frac{36(h+1)^4t^4}{(2h+1)t^2-48h(h+1)}$

 Now we prove the exact value of $\lvert R''\rvert$.
 \begin{lemma}\label{3-6}
 	 $\lvert R''\rvert=h$. 
 \end{lemma}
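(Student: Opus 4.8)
The plan is to establish the two inequalities $\lvert R''\rvert\ge h$ and $\lvert R''\rvert\le h$ separately, and the key numerical coincidence driving both is the choice $h=k+\lfloor\frac{\ell}{2}\rfloor-1$ together with the threshold $\frac{1}{2(h+1)}$ in the definition of $R''$.

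For the lower bound I would argue by contradiction, assuming $\lvert R''\rvert\le h-1$, and estimate $\rho^2x_z$ through the walk identity $\rho^2x_z=\sum_{w\in V(G)}\lvert N(z)\cap N(w)\rvert\,x_w$. Splitting the sum according to whether $w\in R''$ or not, the $R''$-part is at most $x_z\sum_{w\in R''}\lvert N(z)\cap N(w)\rvert\le \lvert R''\rvert\,d_G(z)\,x_z\le (h-1)nx_z$. For $w\notin R''$ one has $x_w<\frac{1}{2(h+1)}x_z$, so the remaining part is at most $\frac{1}{2(h+1)}x_z\sum_{u\in N(z)}\lvert N(u)\setminus R''\rvert=\frac{1}{2(h+1)}e(N(z),\overline{R''})x_z$. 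Using (\ref{,2}) together with $e(G)\le (h+\frac12)(n-1)$, which follows from (\ref{upbound}), this is at most $\frac{2h+1}{2(h+1)}nx_z<nx_z$. Hence $\rho^2x_z\le\big(h-\tfrac{1}{2(h+1)}\big)nx_z<hnx_z$, contradicting $\rho^2\ge hn$ from (\ref{3-1}). The point is precisely that the threshold $\frac{1}{2(h+1)}$ was chosen so that $\frac{2h+1}{2(h+1)}<1$, which is exactly what makes the two contributions add up to strictly less than $hn x_z$.

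For the upper bound I would assume $\lvert R''\rvert\ge h+1$ and produce a forbidden subgraph. Picking $h+1$ vertices of $R''$, Lemma \ref{3-5} gives each degree at least $\big(1-\frac{5}{6(h+1)}\big)n$, so the intersection inequality (\ref{cup}) yields a common neighborhood $W$ with $\lvert W\rvert\ge (h+1)\big(1-\frac{5}{6(h+1)}\big)n-hn=\frac{n}{6}$; thus $G$ contains a complete bipartite subgraph $K_{h+1,\lvert W\rvert}$ with parts $\{v_1,\dots,v_{h+1}\}$ and $W$. Since $h+1=k+\lfloor\frac{\ell}{2}\rfloor$, I would use $v_1,\dots,v_k$ as centers of $k$ disjoint copies of $S_{\ell-1}$ (leaves drawn from $W$) and the remaining $\lfloor\frac{\ell}{2}\rfloor$ vertices $v_{k+1},\dots,v_{h+1}$, alternated with fresh vertices of $W$, to build a $P_\ell$: an alternating path with $\lfloor\frac{\ell}{2}\rfloor$ vertices on the small side attains length $P_{2\lfloor\ell/2\rfloor+1}\supseteq P_\ell$ in both parities of $\ell$. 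Only $k(\ell-1)+\lceil\frac{\ell}{2}\rceil=O(1)$ vertices of $W$ are consumed, far fewer than $\frac{n}{6}$, so we obtain a copy of $kS_{\ell-1}\cup P_\ell$, contradicting the choice of $G$. Combining the two bounds gives $\lvert R''\rvert=h$.

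The edge-count and threshold estimates in the lower bound are essentially routine; the step that requires genuine care is the embedding in the upper bound, namely verifying that with exactly $h+1=k+\lfloor\frac{\ell}{2}\rfloor$ high-degree vertices one can simultaneously fit the $k$ stars and the path $P_\ell$ into $K_{h+1,W}$, and checking the path-length bookkeeping separately for even and odd $\ell$.
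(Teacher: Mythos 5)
Your proposal is correct and follows essentially the same route as the paper: the lower bound $\lvert R''\rvert\ge h$ via the two-walk estimate $\rho^2 x_z$ split over $R''$ and $\overline{R''}$ with the threshold $\frac{1}{2(h+1)}$ and the Tur\'an bound (\ref{upbound}), and the upper bound via (\ref{cup}) applied to $h+1$ vertices of $R''$ (using Lemma \ref{3-5}) followed by embedding $kS_{\ell-1}\cup P_\ell$ into the resulting common neighborhood. Your only deviations are cosmetic: you unify the even/odd cases through $P_{2\lfloor \ell/2\rfloor+1}\supseteq P_\ell$ and draw the star leaves from $W$ rather than from the individual neighborhoods, which changes nothing of substance.
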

\begin{proof}
	Firstly, we prove $\lvert R''\rvert>h-1$. Suppose to the contrary that $\lvert R''\rvert\leq h-1$. Let $M=N(z)\cup N^2(z)$. By (\ref{upbound}) and (\ref{,2}), we obtain 
	\begin{align}\label{2.16}
		e(M\setminus R'',\,N(z))\leq 2e(G)\leq 2\left(h+\frac{1}{2}\right)n.
	\end{align}
	The fact $M\cap R'' \subseteq R''\setminus\{z\}$ and (\ref{,3}) imply
	\begin{equation}\label{2.17}
		e(M\cap R'',\,N(z))\leq \lvert M\cap R''\rvert\cdot\lvert N(z)\rvert\leq \lvert R''\setminus \{z\}\rvert\, d_G(z).
	\end{equation}

	Combining (\ref{2.16}) and (\ref{2.17}), we have
	\begin{align}\label{2.15}
		\rho ^2x_z&=d_G(z)x_z+\sum_{u\in (M\cap R'')}d_{G[N(z)]}(u)x_u+\sum_{u\in (M\setminus R'')}d_{G[N(z)]}(u)x_u \notag \notag \\
		&\leq d_G(z)x_z+e(M\cap R'',\,N(z))x_z+e(M\setminus R'',\,N(z))\frac{1}{2(h+1)}x_z\notag \\
		&\leq d_G(z)x_z+\lvert R''\setminus \{z\}\rvert d_G(z) x_z+2\left(h+\frac{1}{2}\right)n\frac{1}{2(h+1)}x_z \notag \\
		&=\lvert R''\rvert d_G(z)x_z +\frac{2h+1}{2h+2}nx_z\notag .
	\end{align}

Together with the fact $hn\leq \rho ^2(G)$, it follows
\begin{equation*}
	hn\leq \lvert R''\rvert d_G(z)+\frac{2h+1}{2h+2}n\leq (h-1)(n-1)+\frac{2h+1}{2h+2}n,
\end{equation*}
which may deduce $h\leq 0$, a contradiction. Hence, $\lvert R''\rvert >h-1$.

If $\lvert R''\rvert \geq h+1$, suppose $\{v_1,\,v_2,\cdots,v_{h+1}\}\subseteq R''$ and $W=N(v_1)\cap \cdots \cap N(v_{h+1})$. By (\ref{cup}), it follows
\begin{equation*}
	\begin{split}
	\lvert W\rvert  &\geq \sum_{i=1}^{h+1}d_G(v_i)-hn \geq (h+1)\left(1-\frac{5}{6(h+1)}\right)n-hn\geq (k+1)\ell,
	\end{split}
\end{equation*}
the last inequality holds as $n\geq \frac{t^2}{\alpha^3}$.
When $\ell $ is even, we may find a path 
$P_{\ell}=u_1v_1\cdots u_{\frac{\ell}{2}}v_{\frac{\ell}{2}}$, where $u_i\in W$ ($i= 1,\,2,\,\cdots,\frac{\ell}{2}$). As $d_G(v_i)\geq \left(1-\frac{5}{6(h+1)}\right)n$ ($i=\frac{\ell}{2}+1,\cdots,h+1$), we may find a copy of $kS_{\ell-1}$ in $G-P_{\ell}$ with $v_{\frac{\ell}{2}+1},\cdots,v_{h+1}$ as the $k$ center vertices.
Then there is a copy of $kS_{\ell -1}\cup P_{\ell}$ in $G$, a contradiction. When $\ell $ is odd, we may find a path
$P_{\ell}=u_1v_1\cdots u_{\frac{\ell-1}{2}}v_{\frac{\ell-1}{2}}u_{\frac{\ell+1}{2}}$ where  $u_i\in W$ ($i= 1,\,2,\,\cdots,\frac{\ell+1}{2}$) and a copy of   $kS_{\ell-1}$ in $G-P_{\ell}$ with $v_{\frac{\ell+1}{2}},\cdots,v_{h+1}$ as the $k$ center vertices.
Then there is a copy of $kS_{\ell -1}\cup P_{\ell}$ in $G$, a contradiction.
Therefore, $\lvert R''\rvert =h$ holds.
\end{proof}

\section{Proof of Theorem \ref{2}}
Suppose $k\geq 1$, $\ell \geq 4$,  $h=k+\lfloor\frac{\ell }{2}\rfloor-1$, $t=(\ell ^2-\ell +1)k+\frac{\ell ^2+3\ell -2}{2}$, $\alpha =\frac{1}{2(h+1)t^2}$ and $n\geq \frac{t^2}{\alpha ^3}$.
Let $G\in {\rm Ex}_{sp}(n,\,kS_{\ell -1}\cup P_{\ell})$, $x_z={\rm max}\{x_v:v\in V(G)\}$ and $R''=\{v\in V(G):\,x_v\geq \frac{1}{2(h+1)}x_z\}$. By Lemma \ref{3-6}, $\lvert R''\rvert =h$ holds. Write $R''=\{v_1,\,v_2,\cdots,v_{h-1},\,z\}$ and $W=\{v\in V(G): R''\subseteq N_G(v)\}$. By Lemma \ref{3-5}, we have $d_G(v)\geq n-\frac{5n}{6(h+1)}$ for each $v\in R''$. Hence when $n\geq 8(h+1)^3t^8$, we have $$\lvert W\rvert \geq n-\frac{5n\lvert R''\rvert}{6(h+1)} >(k+1)\ell. $$ 

\textbf{Claim A.}
$G[\overline{R''}]$ is $S_{\ell -1}$-free.

	 Suppose to the contrary that  $G[\overline{R''}]$ contains a copy of $S_{\ell -1}$. Let $R_1=\{v_1,\,v_2,\cdots,v_{k-1}\}\subseteq R''$. Since  each vertex in $R''$ has degree at least $\left(1-\frac{5}{6(h+1)}\right)n$, we may find a $(k-1)S_{\ell -1}$ with $v_1,\cdots,v_{k-1}$ as the $k-1$ center vertices and leaves vertices in $V(G)\setminus (R''\cup V(S_{\ell -1}))$. Then there is a copy of $kS_{\ell -1}$ in $G$.
	When $\ell $ is even, we have $\lvert R''\setminus R_1\rvert =\frac{\ell}{2}$. Since $\lvert W\rvert -k\ell>\ell$ when $n\geq \frac{t^2}{\alpha ^3}$, we may find a copy of $P_{\ell}=v_{k}u_1\cdots zu_{\frac{\ell}{2}}$, where $u_i\in W\setminus V\left(kS_{\ell -1}\right)$ ($i=1,\cdots ,\frac{\ell}{2}$). Then $G$ contains a copy of $kS_{\ell -1}\cup P_{\ell}$, a contradiction. When $\ell$  is odd,  we have $\lvert R''\setminus R_1\rvert =\frac{\ell-1}{2}$. We may find a copy of $P_{\ell}=u_1v_k\cdots u_{\frac{\ell-1}{2}}zu_{\frac{\ell+1}{2}}$, where $u_i\in W\setminus V\left(kS_{\ell -1}\right)$ ($i=1,\,\cdots ,\,\frac{\ell+1}{2}$), a contradiction. Therefore, $G[\overline{R''}]$ is $S_{\ell -1}$-free.

\textbf{Claim B.}
	$z$ is adjacent to each vertex of $\overline{R''}$.

 Suppose to the contrary that there is a vertex $v$ in $\overline{R''}$ not adjacent to $z$. Let $G_1$ be a graph obtained from $G$ by deleting all edges incident with $v$ in $G[\overline{R''}]$ and adding an edge $zv$. If $G_1$ contains a copy of $kS_{\ell -1}\cup P_{\ell }$, we have $zv\in E(kS_{\ell -1}\cup P_{\ell })$. Since $zv$ is a pendent edge in $G_1$ and $\lvert W\rvert >(k+1)\ell$, we may find a vertex $w\in W$ and use $zw$ as a replacement of $zv$, then we obtain a copy of $kS_{\ell -1}\cup P_{\ell }$ in $G$, a contradiction. Therefore, $G_1$ is $kS_{\ell -1}\cup P_{\ell }$-free. 
 The fact that $G[\overline{R''}]$ is $S_{\ell -1}$-free implies $d_{G[\overline{R''}]}(v) < \ell -1$. When $w\notin R''$, we have $x_w<\frac{1}{2(h+1)}x_z$.  It follows 
$$
		\sum_{w\in N_{G[\overline{R''}]}(v)}x_w<\frac{(\ell -1)x_z}{2(h+1)}.
$$
Then we have
\begin{equation*}
	\begin{split}
		\rho (G_1)-\rho (G)&\geq 2x_v\left(x_z-\sum_{w\in N_{G[\overline{R''}]}(v)}x_w\right)>2x_v\left(x_z-\frac{(\ell -1)x_z}{2(h+1)}\right)>0,
	\end{split}
\end{equation*}
the inequality deduces that $z$ is adjacent to each vertex of $\overline{R''}$.

Set $R_2=\{v_1,\cdots,v_k\}\subseteq R''$.

\textbf{Claim C.}
	When $\ell $ is even, there is no edge in $G[\overline{R''}]$.

	Suppose to the contrary that there is an edge in $G[\overline{R''}]$, denoted by $uv$. By Claim B, we have $uz$, $vz\in E(G)$.  Since each vertex in $R''$ has degree at least $\left(1-\frac{5}{6(h+1)}\right)n$, we may find a copy of $kS_{\ell -1}$ in $G$ with  the $k$ center vertices in $R_2$ and leaves vertices in $\overline{R''}$. Then $\lvert R''\setminus R_2\rvert =\frac{\ell }{2}-1$ holds. As $\lvert W\rvert >(k+1)\ell$, we may find a copy of $P_{\ell}=u_1v_{k+1}\cdots u_{\frac{\ell}{2}-2} v_{h-1}u_{\frac{\ell}{2}-1}zvu$, where $u_i\in W\setminus V(kS_{\ell -1})$ ($i=1,\cdots, \frac{\ell}{2}-1$). Then $G$ contains a copy of $kS_{\ell -1}\cup P_{\ell}$, a contradiction.

%Since $S_{n,\,h}$ is $kS_{\ell -1}\cup P_{\ell }$-free and the spectral radius increases strictly by adding an edge, we deduce $G=S_{n,\,h}$.

\textbf{Claim D.}
	When $\ell$ is odd,  there is at most  one edge in $G[\overline{R''}]$.

	Suppose to the contrary that there is a $P_3\subseteq G[\overline{R''}]$, denoted by $uvw$. By Claim B, we have $wz\in E(G)$. Since each vertex in $R''$ has degree at least $\left(1-\frac{5}{6(h+1)}\right)n$, we may find a copy of $kS_{\ell -1}$ in $G$ with the $k$ center vertices in $R_2$ and leaves vertices in $\overline{R''}$. Then $\lvert R''\setminus R_2\rvert =\frac{\ell-3 }{2}$ holds. As $\lvert W\rvert >(k+1)\ell$, we may find a path $P_{\ell}=u_1v_{k+1}u_2\cdots v_{h-1} u_{\frac{\ell-3}{2}}zwvu$ where   $u_i\in W\setminus V(kS_{\ell -1})$ ($i=1,\cdots ,\frac{\ell-3}{2}$). Then there is a copy of $kS_{\ell -1}\cup P_{\ell }$ in $G$, a contradiction. Hence $G$ is $P_3$-free.

	Suppose to the contrary $2P_2\subseteq G[\overline{R''}]$, denoted by $w_1w_2$, $w_3w_4$. If one of $\{w_1,\,w_2,\,w_3,\,w_4\}$ is adjacent to a vertex  in $ R''\setminus \{z\}$, without loss of generality, say $v_{\frac{\ell-5}{2}}w_1\in E(G)$. We may find a $P_{\ell}$ with $\frac{\ell-3}{2}$ vertices in $R''$, $P_{\ell}=u_1v_1\cdots u_{\frac{\ell-5}{2}}v_{\frac{\ell-5}{2}}w_1w_2zw_3w_4$, $u_i\in W$ ($i=1,\,\cdots ,\,\frac{\ell-5}{2}$). Moreover, we may find a $kS_{\ell -1}$ with the remaining $k$ vertices of $R''$ as the $k$ center vertices and leaves vertices in $\overline{R''}\setminus P_{\ell}$, a contradiction.
 Hence we have $N_{G[R'']}(w_i)=\{z\}$ ($i=1,\,2,\,3,\,4$). Let $G_2$ be a graph obtained from $G$ by deleting edge $w_1w_2$ and adding edge $w_1v_1$. If $G_2$ contains a copy of  $kS_{\ell -1}\cup P_{\ell }$, we have $w_1v_1\in E(kS_{\ell -1}\cup P_{\ell })$. Since $w_1v_1$ is a pendent edge in $G_2$ and $\lvert W\rvert>(k+1)\ell$, we may find a vertex $w_5\in W$ and use $w_5v_1$ as a replacement of $w_1v_1$. Then we obtain  a copy of $kS_{\ell -1}\cup P_{\ell }$ in $G$, a contradiction.  Hence
	$G_2$ is $kS_{\ell -1}\cup P_{\ell }$-free. Since $v_1\in R''$, $w_2\in \overline{R''}$, we have 
$
		x_{w_2}<\frac{1}{2(h+1)}x_z\leq x_{v_1}.
$
Then
\begin{equation*}
	\rho (G_2)-\rho(G)\geq \textbf{x}^T(A(G_2)-A(G))\textbf{x}=2x_{w_1}x_{v_1}-2x_{w_1}x_{w_2}>0,
\end{equation*}
this contradiction implies that $G[\overline{R''}]$ is 2$P_2$-free. Hence there is at most one edge in $G[\overline{R''}]$.

When $\ell $ is even,  $G[\overline{R''}]=\overline{K}_{n-h}$ holds from Claim C. By the facts that the spectral radius of a graph does not decrease  by adding an edge and $S_{n,\,h}$ is $kS_{\ell -1}\cup P_{\ell}$-free, each vertex in $R''$ is adjacent to each vertex in $\overline{R''}$ and $G[R'']=\{K_n\}$ holds. Then we have $G=S_{n,\,h}$. 
When $\ell$ is odd,  $G[\overline{R''}]=K_2\cup \overline{K}_{n-h-2}$ holds from Claim D. Since $S^{+}_{n,\,h}$ is $kS_{\ell -1}\cup P_{\ell }$-free, each vertex in $R''$ is adjacent to each vertex in $\overline{R''}$ and $G[R'']=\{K_n\}$ holds.  Then we have $G=S^{+}_{n,\,h}$.

	~\\
	{\textbf{Declaration}}
	
	The authors have declared that no competing interest exists.
	%references
	

\begin{thebibliography}{99}
		\small{	\bibitem{2008}A. Bhattacharya, S. Friedland, U. N. Peled, On the first eigenvalue of bipartite graphs, Electron. J. Combin. 15 (2008) Article R144 23pp.
			\bibitem{2016}H. Bielak, S. Kieliszek, The Tur\'{a}n number of the graph $2P_5$, Disuss. Math. Graph Theory. 36 (2016) 683-694.
			\bibitem{2011}N. Bushaw, N. Kttle, Tur\'{a}n numbers of multiple paths ad equibipartite forests, Combin. Probab. Comput. 20 (6) (2011) 837-853.
			\bibitem{2008-P} P. Balister, E. Gy\H{o}ri, J. Lehel, R. Schelp, Connected graphs without long paths, Discrete Math. 308 (2008) 4487-4494.
			\bibitem{2019}M. Chen, A. Liu, X. Zhang, Spectral extremal results with forbidding linear forests, Graphs Combin. 35 (2019) 335-351.
			\bibitem{2021-M}M. Chen, A. Liu, X. Zhang, On the spectral radius of graphs without a star forest, Discrete Math. 344 (2021) 112269.
			\bibitem{2020}S. Cioab\v{a}, L. Feng, M. Tait, X. Zhang, The maximum spectral radius of graphs without friendship subgraphs, Electron. J. Combin. 27 (2020) P4.22.
			\bibitem{2022-S}S. Cioab\v{a}, D. Desai, M. Tait, The spectral radius of graphs with no odd wheels, European J. Combin. 99 (2022), Paper No. 103420, 19pp.
			\bibitem{1959} P. Erd\H{o}s, T. Gallai, On maximal paths and circuits of graphs, Acta Math. Acad. Sci. Hungar 10 (1959) 337-356.
			\bibitem{2022-F}T. Fang, X. Yuan, Some results on the Tur\'{a}n number of $k_1P_{\ell}\cup k_2S_{\ell -1}$, arxiv:221109432.
			\bibitem{2007}L. Feng, G. Yu, X. Zhang, Spectral radius of graphs with given matching number, Linear Algebra Appl. 422 (2007) 133-138.
			\bibitem{1998}B. Guiduli, Spectral Extremal for Graphs (Ph.D. thesis), University of Chicago, 1998.
			\bibitem{2019-G}J. Gao, X. Hou, The spectral radius of graphs without long cycles, Linear Algebra Appl. 566 (2019) 17-33.
			\bibitem{2001}Y. Hong, J. Shu, K. Fang, A sharp upper bound of the spectral radius of graphs, J. Combin. Theory Ser. B 81 (2001) 177-183.		
			\bibitem{1997}G. Kopylov, Maximal paths and cycles in a graph, Dokl. Akad. Nauk SSSR 23 (1997) 19-21.
			\bibitem{2013}B. Lidick\'{y}, H. Liu, C. Palmer, On the Tur\'{a}n number of forests, Electron. J. Combin. 20 (2) (2013) 62.
			\bibitem{2022-L}S. Li, J. Yin, J. Li, The Tur\'{a}n number of $kS_{\ell}$, Discrete Math. 345 (1) (2022) 112653.
			\bibitem{2019-L}Y. Lan, T. Li, Y. Shi, J. Tu, The Tur\'{a}n number of star forests, Appl. Math. Comput. 348 (2019) 270-274.
			\bibitem{2002}V. Nikiforov, Some inequalities for the largest eigenvalue of a graph, Combin. Probab. Comput. 11 (2002) 179-189.
			\bibitem{2010}V. Nikiforov, The spectral radius of graphs without paths and cycles of specified length, Linear Algebra Appl. 432 (2010) 2243-2256.
			\bibitem{2017-N}V. Nikiforov, Bounds on graph eigenvalues II, Linear Algebra Appl. 427 (2017) 183-189.
			\bibitem{2017}M. Tait, J. Tobin, There conjectutres in extremal spectral graph theory, J. Combin. Theory Ser. B 126 (2017) 137-161.
			\bibitem{2023}J. Wang, L. Kang, Y. Xue, On a conjecture of spectral extremal problems, J. Combin. Theory Ser. B 159 (2023) 20-41.
			\bibitem{2021}L. Yuan, X. Zhang, Tur\'{a}n numbers for disjoint paths, J. Graph. Theory. 98 (3) (2021) 499-524.
			\bibitem{2017}L. Yuan, X. Zhang, The Tur\'{a}n number of disjoint copies of paths, Discrete Math. 340 (2) (2017) 132-139.  	
	    	\bibitem{2012}M. Zhai, B. Wang, Proof of a conjecture on the spectral radius of $C_4$-free graphs, Linear Algebra Appl. 437 (2012) 1641-1647.
	    	\bibitem{arxiv2}M. Zhai, R. Liu, A spectral Erd\H{o}s-P\'{o}sa Theorem, Arxiv: 2208.02988v1.
	    
	    	
}
	\end{thebibliography}
\end{document}